\documentclass[]{article}
\usepackage{amssymb}
\usepackage{amsmath}
\usepackage{color}
\usepackage{float}
\usepackage{subfigure}
\usepackage{caption}
\usepackage{tikz}
\usepackage{hyperref}\hypersetup{hypertex=true,
            colorlinks=true,
            linkcolor=blue,
            anchorcolor=blue,
            citecolor=blue}

\usetikzlibrary{backgrounds}
\usetikzlibrary{arrows.meta}
\bibliographystyle{plain}

\newtheorem{thm}{Theorem}[section]
\newtheorem{conj}{Conjecture}[section]
\newtheorem{coro}[thm]{Corollary}
\newtheorem{definition}{Definition}[section]
\newtheorem{lem}[thm]{Lemma}

\newcommand{\JBJ}[1]{{#1}}

\usepackage{amsfonts}
\newenvironment{proof}{{\medbreak\noindent\it  Proof.}\,}{\hfill$\square$\medbreak}

\usepackage{geometry}
\geometry{a4paper,left=2.6cm,right=2.6cm,top=2cm,bottom=2cm}

\usetikzlibrary{shapes, arrows, calc, arrows.meta, fit, positioning} 
\tikzset{  
	-stealth,auto,node distance =1.5 cm and 1.3 cm, thick,
	state/.style ={circle, draw, inner sep=0.3pt}, 
	point/.style = {circle, draw, inner sep=0.18cm, fill, node contents={}},  
	bidirected/.style={Latex-Latex,dashed}, 
	el/.style = {inner sep=2.5pt, align=right, sloped}  
}  

\begin{document}


\title{Arc-disjoint out-branchings and in-branchings in semicomplete digraphs}

\author{J. Bang-Jensen\thanks{Department of Mathematics and Computer Science, University of Southern Denmark,Odense Denmark.(email: jbj@imada.sdu.dk)}\and Y. Wang \thanks{Department of Mathematics and Computer Science, University of Southern Denmark, Odense, Denmark and School of Mathematics, Shandong University, Jinan 250100, China. (email: yunwang@imada.sdu.dk,wangyun\_sdu@163.com)}}

\maketitle

\begin{abstract}
  An out-branching $B^+_u$ (in-branching $B^-_u$) in a digraph $D$  is a connected spanning subdigraph of $D$ in which every vertex except the vertex $u$, called the root,  has in-degree (out-degree)  one. It is well-known that there exists a polynomial algorithm for deciding whether a given digraph has $k$  arc-disjoint out-branchings with prescribed roots ($k$ is part of the input). In sharp contrast to this, it is already NP-complete to decide if a digraph has one  out-branching which is arc-disjoint from some in-branching. \JBJ{A digraph is {\bf semicomplete} if it has no pair of non adjacent vertices. A {\bf tournament} is a semicomplete digraph without directed cycles of length 2.} In this paper we give a complete classification of semicomplete digraphs which have an out-branching $B^+_u$ which is arc-disjoint from some in-branching $B^-_v$ where $u,v$ are prescribed vertices of $D$. Our characterization, which is surprisingly simple, generalizes a complicated characterization for tournaments from 1991 by the first author and our proof implies the existence of a polynomial algorithm for checking whether a given semicomplete digraph has such a pair of branchings for prescribed vertices $u,v$ and construct a solution if one exists. \JBJ{This confirms a conjecture of Bang-Jensen for the case of semicomplete digraphs.}

\end{abstract}

\noindent{\bf Keywords:} \texttt{arc-disjoint subdigraphs; in-branchings; out-branchings; semicomplete digraph; polynomial algorithm }

\section{Introduction}\label{intro}
Notation follows \cite{bang2009} so we only repeat a few definitions here (see also Section
\ref{sec:prelim}).
Let $D=(V,A)$ be a digraph. An {\bf out-tree} ({\bf in-tree}) is an oriented tree in which every vertex except one, called the {\bf root}, has in-degree (out-degree) one. An {\bf out-branching} ({\bf in-branching}) of $D$ is a spanning out-tree (in-tree) in $D$. For a subdigraph $H$ of $D$ and a vertex $s$ of $H$ we denote by $B_{s,H}^+$, (resp., $B_{s,H}^-$) an arbitrary out-branching (resp., in-branching) rooted at $s$ in $H$. To simplify the notation, we set $B_s^+=B_{s,D}^+$ and $B_s^-=B_{s,D}^-$. 

A digraph $D$ is {\bf strong} if there exists a path from $x$ to $y$  in $D$ for every ordered pair of distinct vertices $x$, $y$ of $D$ and $D$ is {\bf $k$-arc-strong} if $D\setminus{}A'$ is strong for every subset $A' \subseteq A$ of size at most $k - 1$. For a subset $X$ of $V$, we denote by $D\left\langle X \right\rangle$ the subdigraph of $D$ induced by $X$. 

The following well-known theorem, due to Edmonds, provides a characterization for the existence of $k$ arc-disjoint out-branchings rooted at the same vertex. 
\begin{thm}\label{edmonds1973} {\bf(Edmonds' Branching Theorem)} 
A directed multigraph $D = (V,A)$ with a special vertex $s$ has $k$ arc-disjoint out-branchings rooted at $s$ if and only if
\begin{equation}
  \label{kbranchcond}
  d^-(X) \geq k,\;\; \forall \;\; \emptyset\neq X \subseteq V - s.
  \end{equation}
\end{thm}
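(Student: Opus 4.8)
The plan is to prove the ``only if'' direction directly and the ``if'' direction by induction on $k$, the inductive step being a lemma that peels off a single out-branching while losing only one unit of arc-connectivity. For necessity: if $B_1,\dots,B_k$ are arc-disjoint out-branchings rooted at $s$ and $\emptyset\ne X\subseteq V-s$, then in each $B_i$ every vertex of $X$ is reached from $s\notin X$, so $B_i$ uses at least one arc entering $X$; arc-disjointness of the $B_i$ forces $d^-(X)\ge k$, which is \eqref{kbranchcond}. For sufficiency I would induct on $k$, the case $k=0$ being trivial (and $k=1$ being exactly the statement that \eqref{kbranchcond} makes every vertex reachable from $s$). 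The key reduction is the \emph{splitting lemma}: if \eqref{kbranchcond} holds with parameter $k\ge1$, then $D$ has an out-branching $B^+_s$ such that $d^-_{D-A(B^+_s)}(X)\ge k-1$ for every $\emptyset\ne X\subseteq V-s$. Granting this, applying the induction hypothesis to $D-A(B^+_s)$ yields $k-1$ further arc-disjoint out-branchings rooted at $s$, and together with $B^+_s$ these are the desired $k$.

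To prove the splitting lemma I would build $B^+_s$ greedily. Grow an out-tree $T$ rooted at $s$ with vertex set $U$, starting from $U=\{s\}$ and $A(T)=\emptyset$, maintaining the invariant
\[ d^-_{D-A(T)}(X)\ge k-1 \qquad\text{for all } \emptyset\ne X\subseteq V-s, \]
which holds initially by \eqref{kbranchcond}. Call $X$ \emph{tight} (for the current $T$) if $\emptyset\ne X\subseteq V-s$ and $d^-_{D-A(T)}(X)=k-1$, and call an arc $e=(u,v)$ with $u\in U$ and $v\in V\setminus U$ \emph{safe} if it enters no tight set. Adding a safe arc keeps $T$ an out-tree rooted at $s$ and preserves the invariant, since any set whose in-degree in $D-A(T)$ would be pushed below $k-1$ by deleting $e$ must have been tight and entered by $e$. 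Hence it suffices to show that whenever $U\ne V$ there is a safe arc leaving $U$; after at most $|V|-1$ extensions $T$ spans $D$ and is the required out-branching.

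The heart of the argument is the existence of a safe arc, and it rests on two facts. First, $X\mapsto d^-_{D-A(T)}(X)$ is submodular, so the invariant forces the tight sets to be closed under intersection: if $X,Z$ are tight with $X\cap Z\ne\emptyset$ then $X\cap Z$ is tight. Second, for $W=V\setminus U\ne\emptyset$ no arc of $T$ enters $W$, so $d^-_{D-A(T)}(W)=d^-_D(W)\ge k$; in particular $W$ is not tight and at least $k\ge1$ arcs of $D-A(T)$ go from $U$ into $W$. If no tight set meets $W$, any such arc is safe and we are done. Otherwise, let $Z$ be an inclusion-minimal tight set meeting $W$. Then $Z\cap U\ne\emptyset$: else no arc of $T$ enters $Z$, giving $d^-_D(Z)=d^-_{D-A(T)}(Z)=k-1$, contradicting \eqref{kbranchcond}. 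Now $Z\cap W$ is a nonempty proper subset of $Z$ that still meets $W$, hence by minimality it is not tight, so $d^-_{D-A(T)}(Z\cap W)\ge k$. A routine cut-counting identity comparing $d^-_{D-A(T)}(Z\cap W)$ with $d^-_{D-A(T)}(Z)$ then produces an arc $e=(u,v)$ of $D-A(T)$ with $u\in Z\cap U$ and $v\in Z\cap W$; in particular $u\in U$ and $v\in W$. Finally $e$ is safe: if some tight $X$ were entered by $e$, then $v\in X$, so $X\cap Z$ is a tight set meeting $W$ inside $Z$, whence $X\cap Z=Z$ by minimality of $Z$, so $u\in Z\subseteq X$, contradicting that $e$ enters $X$.

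The one genuinely delicate point, and the step I expect to be the main obstacle, is exactly this last argument: one must choose the \emph{inclusion-minimal} tight set meeting $W$ and remove precisely its $U$-part $Z\cap U$, so that the in-degree jump from $Z$ to $Z\cap W$ is realized by an arc that actually crosses from $U$ to $W$, and so that minimality can be reused to certify safety; the rest is bookkeeping with submodular cut functions. I would also note that a minimum cut, and hence the tight sets, can be found by max-flow computations, so the greedy construction runs in polynomial time — the constructive form that the remainder of the paper will want.
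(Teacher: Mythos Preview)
Your proof is correct; it is essentially Lov\'asz's constructive argument for Edmonds' theorem. Note, however, that the paper does not prove Theorem~\ref{edmonds1973} at all: it is quoted as a classical result and only the remark following it mentions that Lov\'asz \cite{lovaszJCT21} gave a constructive proof yielding a polynomial algorithm. So there is nothing in the paper to compare your argument against beyond that citation. What you have written is exactly the Lov\'asz-type proof the paper alludes to: peel off one branching by greedily growing an out-tree while maintaining the $(k-1)$-cut invariant, using submodularity of the in-degree function to show that tight sets are intersection-closed, and using the inclusion-minimal tight set meeting $V\setminus U$ to locate a safe arc. Your identification of the delicate step (choosing $Z$ minimal so that both the existence of an internal $U$-to-$W$ arc and its safety follow from the same minimality) is accurate, and your remark on polynomial-time implementability via max-flow matches the paper's comment.
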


Note that, by Menger's Theorem, (\ref{kbranchcond}) is equivalent to the existence of $k$ arc-disjoint $(s,v)$-paths for every $v\in V-s$. Hence (\ref{kbranchcond}) can be checked in polynomial time via maximum flow calculations, see e.g.,
\cite[Section 5.4]{bang2009}.
Lov\'asz \cite{lovaszJCT21} gave a constructive proof of Theorem \ref{edmonds1973} which implies the existence of a polynomial algorithm for constructing a set of $k$ arc-disjoint branchings from a given root when (\ref{kbranchcond}) is satisfied. 

A natural related problem is to ask for a characterization of  digraphs having an out-branching and an in-branching which are arc-disjoint. Such pair will be called \textbf{a good pair} in this paper and more precisely we call it a \textbf{good $(u,v)$-pair} if the roots $u$ and $v$ are specified. Thomassen showed (see \cite{bangJCT51} and \cite{bangJGT100}) that for general digraphs it is NP-complete to decide if a given digraph has a good pair. This makes it interesting to study classes of digraphs for which we can find a good pair or decide that none exists in polynomial time.

For acyclic digraphs there can only be one choice for the vertices $u,v$ as $u$ must be able to reach all other vertices by a directed path and $v$ must be reachable by all other vertices by a directed paths. A characterization of acyclic digraphs with a good pair and a polynomial algorithm for finding a good pair when it exists was given in \cite{bangJGT42}. A polynomial algorithm was also given in
\cite{bercziIPL109}. In \cite{bangJCT51} 
the first author gave a complete characterization of tournaments with no good $(u,v)$-pair \JBJ{and gave a polynomial algorithm for either producing a good $(u,v)$-pair  for  a given tournament $T$ and two vertices $u,v$ of $T$ or providing a certificate for the non-existence of such a pair in $T$. } Bang-Jensen and Huang characterized quasi-transitive digraphs with a good $(u,u)$-pair \cite{bangJGT20b}. A digraph is {\bf quasi-transitive} if the presence of arcs $uv$ and $vw$ implies an arc between $u$ and $w$. It is easy to see that every semicomplete digraph is quasi-transitive. Gutin and Sun  \cite{gutinDM343} generalized this result to digraphs of the form $D=T[H_1,H_2,\ldots{},H_{|V(T)|}]$, where $T$ is a semicomplete digraph. Such a digraph is called a {\bf composition of $T$} and the precise definition is not important here (see e.g. \cite[Page 9]{bang2009}).\\

\JBJ{The following conjecture, due to Thomassen, is wide open and it is not even known whether already $K=3$ suffices for all digraphs.}

\begin{conj}\cite{thomassen1989}\label{conj:CT}
  There exists an integer $K$ such that every $K$-arc-strong digraph $D$ has a good $(u,v)$-pair for every choice of vertices $u,v$ of $D$.
  \end{conj}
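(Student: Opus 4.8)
Since the statement asserts the existence of a \emph{universal} constant $K$, the goal is an existence proof rather than an algorithm; in fact Thomassen's NP-completeness result forbids any approach that would implicitly solve the (hard) $K=1$ decision problem, so the whole game is to exploit high arc-connectivity as a reservoir of slack. The plan is to begin by recasting the target as a purely connectivity-based covering statement. A good $(u,v)$-pair is exactly a partition of a subset of $A$ into a ``red'' set $R$ and a ``blue'' set $B$ such that $R$ contains an out-branching rooted at $u$ and $B$ contains an in-branching rooted at $v$. By Theorem~\ref{edmonds1973}, $R$ contains such an out-branching iff $d^-_R(X)\ge 1$ for every $\emptyset\neq X\subseteq V-u$; applying the same theorem to the reverse of $D$ shows that $B$ contains the desired in-branching iff $d^+_B(X)\ge 1$ for every $\emptyset\neq X\subseteq V-v$. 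Thus the conjecture becomes: if $d^-(X),d^+(X)\ge K$ for all proper nonempty $X$, then $A$ can be $2$-coloured so that every in-cut avoiding $u$ retains a red arc and every out-cut avoiding $v$ retains a blue arc.

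Before the main attack I would record why the most naive idea is insufficient, since this pinpoints where the difficulty really lives. Edmonds' theorem already yields $K$ arc-disjoint out-branchings $O_1,\dots,O_K$ from $u$ and, via the reverse digraph, $K$ arc-disjoint in-branchings $I_1,\dots,I_K$ to $v$, and one would like some $O_i$ and some $I_j$ to be arc-disjoint. Because the $I_j$ are mutually arc-disjoint, each of the $n-1$ arcs of a fixed $O_i$ lies in at most one $I_j$, so by averaging some $I_j$ shares at most $(n-1)/K$ arcs with $O_i$; but this is $0$ only when $K\ge n$. Hence no uniform constant can be squeezed out by counting alone, which is the clear signal that the two colour classes are genuinely coupled and that structural information about the shared cuts is unavoidable.

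My main line would be induction on $|V|$ via directed splitting-off. After replacing $D$ by a spanning $K$-arc-strong subdigraph with $O(Kn)$ arcs (Mader--Frank sparsification) and, where convenient, symmetrising in- and out-degrees so that the Eulerian splitting machinery applies, I would pick a vertex $w\notin\{u,v\}$ and, using Mader's directed splitting theorem, split off an admissible pair $xw,wy\mapsto xy$ that preserves $K$-arc-strongness on $V-w$. Iterating, I reach a constant-size base digraph on $\{u,v\}$ together with a bounded number of unavoidable vertices, solve it by hand, and then \emph{lift} the solution back through each splitting by reinstating $w$: an arc $xy$ carrying the out-branching is replaced by $xw,wy$, so that $w$ joins the out-branching, and dually for the in-branching, with arc-disjointness maintained locally at $w$.

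The hard part, and the reason the conjecture is still open, is precisely the interaction between the two colour classes in the reformulation and in the lifting. Splitting-off theorems preserve \emph{global} arc-connectivity, but a good pair demands a \emph{simultaneous} red/blue certificate, and a pair admissible for preserving connectivity need not admit a consistent colouring of the lifted branchings. In matroid language one seeks complementary arc sets that respectively host a base of the out-branching matroid at $u$ and a base of the in-branching matroid at $v$; the intersection of these two matroids lacks the exchange structure that would produce a clean min--max, which is exactly why the $K=1$ problem is NP-complete. The crux of any successful proof must therefore be a robust invariant, stronger than plain $K$-arc-strongness (for instance, that the red and blue cut functions stay positive with a fixed margin), that is provably preserved under an admissibly chosen splitting with only a constant loss in connectivity independent of $n$. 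Designing such an invariant and proving its survival through the induction is the step at which every known attempt, and mine, would have to do the real work.
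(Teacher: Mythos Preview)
The statement you are attempting to prove is labelled \emph{Conjecture} in the paper, and the paper explicitly says that it ``is wide open and it is not even known whether already $K=3$ suffices for all digraphs.'' There is therefore no ``paper's own proof'' to compare against: the paper does not prove Conjecture~\ref{conj:CT}; it merely verifies the special case of semicomplete digraphs (with $K=2$) as a consequence of Theorem~\ref{thm:SD2arcstrong}.

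Your proposal is, by your own admission, not a proof either. You correctly reformulate the problem via Edmonds' theorem, you correctly explain why the averaging argument over $K$ out-branchings and $K$ in-branchings cannot yield a universal constant, and you correctly identify splitting-off as a natural inductive tool. But the final paragraph concedes that the crucial step --- finding an invariant stronger than $K$-arc-strongness that survives an admissible split while controlling both colour classes simultaneously --- is exactly ``the step at which every known attempt, and mine, would have to do the real work.'' That is an honest assessment, but it means what you have written is a survey of obstructions, not a proof. In particular, Mader splitting preserves global arc-connectivity, yet you give no argument that a lifted good $(u,v)$-pair in $D/w$ can always be pulled back to one in $D$: when the split arc $xy$ is used by \emph{both} branchings in the smaller digraph (which nothing prevents), reinstating $xw,wy$ does not obviously yield arc-disjoint branchings covering $w$. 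Until that lifting step is made rigorous, the induction does not close, and the conjecture remains open.
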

Bang-Jensen, Bessy, Havet and Yeo \cite{bangJGT100} showed that every digraph of independence number at most 2 and arc-connectivity at least 2 has a good $(u,v)$-pair \JBJ{for at least one choice of vertices $u,v$} and they showed  that the same condition is not sufficient to guarantee a good $(u,v)$-pair for every choice of $u$ and $v$. Hence $K$  in Conjecture \ref{conj:CT} must be at least 3. To the best of our knowledge it is open whether $K=3$ would suffice for all digraphs.

The following conjecture due to Bang-Jensen and Yeo would imply Conjecture \ref{conj:CT}.
\begin{conj}\cite{bangC24}
  \label{conj:BJAY}
  There exists an integer $K$ such that every $K$-arc-strong digraph $D=(V,A)$ has an arc-partition $A=A_1\cup A_2$ such that each of the subdigraphs $D_1=(V,A_1)$ and $D_2=(V,A_2)$ are spanning and strong.
\end{conj}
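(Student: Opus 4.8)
The plan is to recast Conjecture~\ref{conj:BJAY} as a hypergraph two-colouring problem and then attack it with probabilistic and splitting-off tools. First, observe the exact reformulation: an arc-partition $A=A_1\cup A_2$ makes both $(V,A_1)$ and $(V,A_2)$ spanning and strong if and only if, for every nonempty proper $X\subsetneq V$, each of the two colour classes contains an arc entering $X$. Indeed, $(V,A_i)$ is strong precisely when $d^-_{A_i}(X)\ge 1$ for all such $X$, and since $\delta^-_D(X)$ is the disjoint union of its $A_1$- and $A_2$-parts, the requirement is exactly that no directed in-cut $\delta^-(X)$ is monochromatic. Because $D$ is $K$-arc-strong, Menger's theorem gives $|\delta^-(X)|=d^-(X)\ge K$ for every such $X$. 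Thus the conjecture is equivalent to: the hypergraph on vertex set $A$ whose edges are the directed cuts $\{\delta^-(X):\emptyset\neq X\subsetneq V\}$, all of size at least $K$, admits a proper $2$-colouring.

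Given this reformulation, the natural first route is probabilistic. Colour each arc independently and uniformly; a fixed cut of size $\ge K$ is monochromatic with probability at most $2^{1-K}$. A useful first reduction is that it suffices to forbid monochromatic \emph{inclusion-minimal} directed cuts, since in the finite poset of in-cuts ordered by inclusion every in-cut contains a minimal one, so bichromaticity of all minimal cuts forces it for all cuts. To upgrade the single-cut estimate to a global statement one would then organise the tight cuts, using the submodularity of $d^-$ and uncrossing, into a tractable (ideally laminar-like) family and bound how many minimal cuts a single arc can lie in; if each minimal cut of size $\ge K$ met only an exponentially bounded number of others, the Lovász Local Lemma would furnish a valid colouring for $K$ large enough, yielding an explicit $K$.

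A second, complementary route is an inductive splitting-off reduction. Take a minimally $K$-arc-strong $D$; by Mader's theorem there is a vertex $v$ with $d^+(v)=d^-(v)=K$. Using the directed splitting-off theorem one replaces pairs of arcs $(u,v),(v,w)$ at $v$ by arcs $(u,w)$ while preserving $K$-arc-strong connectivity on $V-v$, recurses to obtain a good $2$-colouring of the smaller digraph, and then reinstates $v$, letting the un-split arcs inherit colours from the spliced arcs and arguing that the new cuts through $v$ stay bichromatic. Edmonds' Theorem~\ref{edmonds1973} can seed base cases, since $K$ arc-disjoint out-branchings and $K$ arc-disjoint in-branchings from a common root $s$ already assemble spanning strong subdigraphs of the form $B^+_s\cup B^-_s$.

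The main obstacle, and the reason the conjecture remains open with no finite $K$ known even at arc-connectivity $3$, is exactly the crossing structure of directed cuts. A single arc can be forced into very many inclusion-minimal tight cuts, so the dependency degree in the Local Lemma is not bounded by any function of $K$ alone; symmetrically, the lifting step of the splitting-off induction can create a monochromatic cut through $v$ that no local recolouring repairs. Closing either argument demands genuinely global control of the tight-cut lattice that is unavailable for arbitrary digraphs. For the semicomplete digraphs of this paper the difficulty evaporates, because every strong semicomplete digraph is Hamiltonian and one can exhibit two arc-disjoint spanning strong subdigraphs directly rather than extracting them from the general cut-colouring machinery.
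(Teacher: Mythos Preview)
The statement you were asked to prove is Conjecture~\ref{conj:BJAY}, which the paper records as an \emph{open conjecture} due to Bang-Jensen and Yeo; the paper does not prove it and merely notes that Theorem~\ref{thm:SD2arcstrong} confirms it with $K=3$ for semicomplete digraphs. So there is no ``paper's own proof'' to compare against.

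Your proposal is not a proof either, and you say so yourself: after setting up the cut-colouring reformulation and sketching a Local Lemma route and a splitting-off route, you correctly explain why both break down (unbounded dependency degree for the Local Lemma; uncontrolled new cuts through the reinstated vertex for splitting-off). That diagnosis is accurate, but it means what you have written is a survey of failed attacks, not an argument that establishes the existence of $K$. In particular, the key step ``bound how many minimal cuts a single arc can lie in'' is exactly the missing idea, and nothing in your text supplies it; similarly, ``arguing that the new cuts through $v$ stay bichromatic'' is asserted as a goal, not carried out. A minor inaccuracy: you write that no finite $K$ is known ``even at arc-connectivity $3$''; what is open is whether $K=3$ suffices for all digraphs, not whether any $K$ works for $3$-arc-strong digraphs specifically. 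Your closing remark about semicomplete digraphs being handled directly via Hamiltonicity is in the right spirit but oversimplified: Theorem~\ref{thm:SD2arcstrong} requires a genuine argument and has the exceptional digraph $S_4$, so ``the difficulty evaporates'' overstates the case.
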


The next result implies that the Conjecture \ref{conj:BJAY} holds with $K=3$  for the case of semicomplete digraphs.

\begin{thm}\cite{bangC24}
  \label{thm:SD2arcstrong}
  Let $D=(V,A)$ be a 2-arc-strong semicomplete digraph. Then $D$ has an arc-partition $A=A_1\cup A_2$ such that each of the subdigraphs $D_1=(V,A_1)$ and $D_2=(V,A_2)$ are spanning and strong except if $D$ is isomorphic to the digraph $S_4$ on vertices $\{v_1,v_2,v_3,v_4\}$ and
  arc set \\
  $\{v_1v_2,v_2v_3,v_3v_4,v_4v_1,v_2v_4,v_4v_2,v_1v_3,v_3v_1\}$.
\end{thm}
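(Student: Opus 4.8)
\medskip
\noindent\emph{Proof sketch.}\ The plan is to argue by induction on $n=|V|$. The base consists of the finitely many $2$-arc-strong semicomplete digraphs on a small (bounded) number of vertices, which are checked by hand. For instance, $2$-arc-strongness forces $d^-(v)=d^+(v)=2$ for every vertex when $n=3$, so the only example on three vertices is the semicomplete digraph in which every pair of vertices spans a directed $2$-cycle, and this decomposes into the two directed triangles $v_1v_2v_3v_1$ and $v_1v_3v_2v_1$. The digraph $S_4$ appears at $n=4$ and is the only small $2$-arc-strong semicomplete digraph with no valid partition: any spanning strong subdigraph on four vertices has at least four arcs, so a partition of the eight arcs of $S_4$ would have to split them into two Hamiltonian cycles, but $v_1v_2v_3v_4v_1$ is the unique Hamiltonian cycle of $S_4$ (the remaining four arcs form two vertex-disjoint directed $2$-cycles).

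\medskip
For the inductive step with $n\ge 6$, the main tool is an \emph{extension step}. Suppose there is a vertex $v$ such that $D-v$ is $2$-arc-strong. Then $D-v$ is a $2$-arc-strong semicomplete digraph on $n-1\ge 5$ vertices, hence not isomorphic to $S_4$, so by the induction hypothesis $A(D-v)=A_1'\cup A_2'$ with $(V-v,A_i')$ spanning and strong for $i=1,2$. Since $D$ is $2$-arc-strong we have $d^-_D(v)\ge 2$ and $d^+_D(v)\ge 2$; choose distinct in-arcs $a_1v,a_2v$ and distinct out-arcs $vb_1,vb_2$ (these are four distinct arcs, since an in-arc and an out-arc at $v$ are always distinct). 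Put into $A_1$ the arcs of $A_1'$, the arcs $a_1v$ and $vb_1$, and every arc at $v$ not among the four chosen; put into $A_2$ the arcs of $A_2'$ and the arcs $a_2v$ and $vb_2$. Then $(V,A_i)$ is the spanning strong subdigraph $(V-v,A_i')$ together with $v$, one arc entering $v$ and one arc leaving $v$, each with its other endpoint in $V-v$, so $(V,A_i)$ is spanning and strong. Hence it suffices to exhibit a vertex $v$ with $D-v$ $2$-arc-strong.

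\medskip
It remains to handle the case where $D$ is \emph{critically $2$-arc-strong}, i.e.\ $D-v$ is not $2$-arc-strong for any $v$; this is the main obstacle. For each $v$ there is a set $\emptyset\ne X_v\subsetneq V-v$ with $d^-_{D-v}(X_v)\le 1$; since $D$ is $2$-arc-strong this forces $v$ to send an arc into $X_v$, and the near-tightness of these cuts together with the density of a semicomplete digraph severely restricts how such sets can be arranged. The plan is to uncross the sets $X_v$ over all vertices $v$ and deduce that a critically $2$-arc-strong semicomplete digraph either has at most a bounded number of vertices --- so it is one of the base cases and yields no exception beyond $S_4$ --- or belongs to a short explicit list of highly structured digraphs (certain circulant digraphs and small compositions), each of which one checks directly decomposes into two arc-disjoint spanning strong subdigraphs, typically into two arc-disjoint Hamiltonian cycles. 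Carrying out this structural analysis --- in particular controlling how the sets $X_v$ for different $v$ overlap, and ruling out large critically $2$-arc-strong semicomplete digraphs --- is the technical heart of the argument; granting it, the extension step and the base cases complete the proof.
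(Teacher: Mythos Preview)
The paper does not prove Theorem~\ref{thm:SD2arcstrong}; it is quoted from \cite{bangC24} and used as a black box in the proof of Theorem~\ref{thm:SDbranchchar}. So there is no ``paper's own proof'' to compare your proposal against.

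As for the proposal itself: the extension step and the small-case analysis (in particular the verification that $S_4$ has a unique Hamiltonian cycle and hence no valid partition) are fine. The genuine gap is the critically $2$-arc-strong case. You do not prove anything there; you announce that an uncrossing argument on the sets $X_v$ will show that any critically $2$-arc-strong semicomplete digraph has bounded order or lies on a ``short explicit list of highly structured digraphs'', and then you write ``granting it''. That is precisely the content of the theorem, so at this point the sketch has not reduced the problem. Moreover, the structural claim you are asserting is not obviously true: there is no a priori reason why uncrossing the one-parameter family $\{X_v\}_{v\in V}$ of near-tight sets should force $|V|$ to be bounded, and you give no mechanism (e.g.\ a laminar or chain structure on the $X_v$, or a counting argument tying $|V|$ to the number of tight cuts) that would produce such a bound. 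Without this, the inductive scheme does not get off the ground, because the critically $2$-arc-strong case is exactly where all the difficulty lies.
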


It is not difficult to check that $S_4$ has a good $(u,v)$-pair for all possible choices of $u,v\in V(S_4)$. Hence every 2-arc-strong semicomplete digraph has a good pair for every possible choice of $u,v$. \JBJ{So Conjecture \ref{conj:CT} holds for semicomplete digraphs with $K=2$.} In \cite{bangJGT95} Theorem \ref{thm:SD2arcstrong} was generalized to semicomplete compositions, that is, digraphs of the form $S[H_1,\ldots{},H_{|V(S)|}]$. From that result a complete characterization for the existence of good $(u,v)$-pairs in  2-arc-strong semicomplete compositions can be obtained.

Bang-Jensen and Huang considered the case that $u=v$ for strong semicomplete digraphs by proving Theorem \ref{sABC}.

\begin{thm}\cite{bangJGT20b} \label{sABC}
  Let $D$ be a strong semicomplete digraph and let $u\in V(D)$ be arbitrary vertex. Suppose that $D$ does not contain a good pair with the same root $u$. Then the following holds, where $A, B, C$ form a partition of
  $V(D)-u$ such that $N^+_D(u) = A\cup C$ and $N^-_D(u)= B\cup C$. 
  There is precisely one arc $e$ leaving the terminal component of $D\left\langle A \right\rangle$ and precisely one arc $e^{\prime}$ entering the initial component of $D\left\langle B\right\rangle$ and $e=e^{\prime}$.
\end{thm}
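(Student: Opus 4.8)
The plan is to prove the contrapositive in constructive form: if the displayed structure fails, then $D$ has a good $(u,u)$-pair. I would begin with two quick reductions. If $A=\emptyset$ or $B=\emptyset$, then $D$ has a good $(u,u)$-pair outright: for instance, if $A=\emptyset$, let $B^-_u$ be the star made of one arc into $u$ from each of the other vertices (each vertex other than $u$ lies in $B\cup C$, so such an arc exists), and build $B^+_u$ by sending $u$ directly to all of $C$, entering $\induce{D}{B}$ through its initial strong component along an arc from $C$, and completing with an out-branching of $\induce{D}{B}$; these two branchings are arc-disjoint, and the case $B=\emptyset$ is symmetric. So I may assume $A,B\neq\emptyset$. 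Since $\induce{D}{A}$ and $\induce{D}{B}$ are semicomplete, their condensations are transitive tournaments; let $A'$ be the unique terminal strong component of $\induce{D}{A}$ and $B'$ the unique initial strong component of $\induce{D}{B}$. As no vertex of $A$ has an arc to $u$ and $A'$ sends no arc into $A-A'$, every arc of $D$ leaving $A'$ has its head in $B\cup C$; write $E_A$ for this set of arcs, which is nonempty because $D$ is strong. Symmetrically, the set $E_B$ of arcs of $D$ entering $B'$ is exactly the set of arcs from $A\cup C$ into $B'$, and is likewise nonempty. The displayed structure says precisely that $E_A=E_B=\{e\}$ for a single arc $e$ (and then $e$ necessarily runs from $A'$ to $B'$).

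The core of the argument is the following construction. Choose any $p\in E_B$ and any $q\in E_A$; let $y\in B'$ be the head of $p$ and $x\in A'$ the tail of $q$, and set
\[ \begin{array}{l} B^+_u=\{ua:a\in A\}\cup\{uc:c\in C\}\cup\{p\}\cup B^+_{y,\induce{D}{B}},\\[3pt] B^-_u=B^-_{x,\induce{D}{A}}\cup\{q\}\cup\{cu:c\in C\}\cup\{bu:b\in B\}. \end{array} \]
Here $B^+_{y,\induce{D}{B}}$ exists because $y$ lies in the initial strong component of $\induce{D}{B}$, and $B^-_{x,\induce{D}{A}}$ exists because $x$ lies in the terminal strong component of $\induce{D}{A}$. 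A routine check shows that $B^+_u$ is a spanning connected subdigraph in which every vertex other than $u$ has in-degree one, hence an out-branching rooted at $u$, and likewise $B^-_u$ is an in-branching rooted at $u$. For arc-disjointness, note that every arc of $B^+_u$ either leaves $u$, or lies inside $B$, or equals $p$, while every arc of $B^-_u$ either enters $u$, or lies inside $A$, or equals $q$; since $A$ and $B$ are disjoint, $u\notin A\cup B$, $A'\subseteq A$ and $B'\subseteq B$, an arc lying in both branchings must therefore be the common value $p=q$. Hence $(B^+_u,B^-_u)$ is a good $(u,u)$-pair whenever $p\neq q$.

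Finally I would observe that $p\in E_B$ and $q\in E_A$ can be chosen distinct exactly when the displayed structure fails: if $|E_B|\ge 2$, fix any $q\in E_A$ and take $p\in E_B\setminus\{q\}$; if $|E_A|\ge 2$, argue symmetrically; and if $|E_A|=|E_B|=1$ with $E_A\neq E_B$, take their (distinct) unique elements. Thus a failure of the structure produces a good $(u,u)$-pair, which is the contrapositive we wanted. The step I expect to need the most care is the arc-disjointness bookkeeping in the construction — checking that $B^+_{y,\induce{D}{B}}$, $B^-_{x,\induce{D}{A}}$, the two stars at $u$, and the crossing arcs $p,q$ are pairwise disjoint, with $p=q$ the only possible coincidence — but this is exactly what the two facts ``$E_A$ consists of arcs out of $A'$'' and ``$E_B$ consists of arcs into $B'$'' are tailored to deliver; the real content is simply the observation that this one greedy construction already detects the entire obstruction.
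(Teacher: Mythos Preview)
The paper does not prove Theorem~\ref{sABC}; it is quoted from \cite{bangJGT20b} and used as a black box, so there is no in-paper argument to compare against. Your proposal is a correct, self-contained proof of the contrapositive. The reductions $A=\emptyset$ and $B=\emptyset$ are fine once you note (as you implicitly do) that strongness forces an arc from $C$ into the initial component of $\induce{D}{B}$ when $A=\emptyset$ and $B\neq\emptyset$, since $u$ does not dominate $B$ and $B'$ receives no arcs from $B\setminus B'$. In the main construction your disjointness bookkeeping is exactly right: the only possible common arc of $B^+_u$ and $B^-_u$ is $p=q$, and the final case split shows that whenever the structural conclusion fails one can choose $p\neq q$. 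This is precisely the kind of direct, constructive argument one would expect for this statement; it is in the same spirit as the original proof in \cite{bangJGT20b}, which also builds the two branchings explicitly from stars at $u$ together with branchings inside $\induce{D}{A}$ and $\induce{D}{B}$.
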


\JBJ{For two vertices $x,y$ of a digraph $D$ we} use $P_{x,y}$ to denote a path from $x$ to $y$. Such a path is also called an $(x,y)$-path.
Bang-Jensen's characterization of tournaments with good $(u,v)$-pairs in \cite{bangJCT51} is quite complicated and does not extend  to semicomplete digraphs \JBJ{so we will not describe it here}.
In this paper, we prove the following  a surprisingly simple characterization for the existence of a good $(u,v)$-pair in semicomplete digraphs.

\begin{figure}[H]
\centering
\subfigure{\begin{minipage}[t]{0.15\linewidth}
\centering\begin{tikzpicture}[scale=0.8]
		\filldraw[black](0,9) circle (3pt)node[label=left:$u$](u){};
		\filldraw[black](0,8) circle (3pt)node[label=left:$v$](v){};
		\path[draw, line width=0.8pt] (u) edge (v);
	\end{tikzpicture}\caption*{(a)}\end{minipage}}
\subfigure{\begin{minipage}[t]{0.15\linewidth}
\centering\begin{tikzpicture}[scale=0.8]
		\filldraw[black](0,10) circle (3pt)node[label=left:$u$](u){};
		\filldraw[black](0,9) circle (3pt)node[](w){};
		\filldraw[black](0,8) circle (3pt)node[label=left:$v$](v){};
			
		\path[draw, line width=0.8pt] (u) edge[bend left=30] (v);
		\path[draw, line width=0.8pt] (u) edge (w) edge (v);
		\end{tikzpicture}\caption*{(b)}\end{minipage}}
	\subfigure{\begin{minipage}[t]{0.15\linewidth}
			\centering\begin{tikzpicture}[scale=0.8]
		\filldraw[black](0,10) circle (3pt)node[label=left:$u$](u){};
		\filldraw[black](0,9) circle (3pt)node[](w){};
		\filldraw[black](0,8) circle (3pt)node[label=left:$v$](v){};
		
		\path[draw, line width=0.8pt] (u) edge[bend left=30] (v);
		\path[draw, line width=0.8pt]  (v) edge[bend left=30] (u);
		\path[draw, line width=0.8pt] (u) edge (w) edge (v);
	\end{tikzpicture}\caption*{(c)}\end{minipage}}
\subfigure{\begin{minipage}[t]{0.15\linewidth}
		\centering\begin{tikzpicture}[scale=0.8]
		\filldraw[black](0,10) circle (3pt)node[label=left:$u$](a){};
		\filldraw[black](0,9) circle (3pt)node[](b){};
		\filldraw[black](0,8) circle (3pt)node[](c){};
		\filldraw[black](0,7) circle (3pt)node[label=left:$v$](d){};
			
		\path[draw, line width=0.8pt] (a) edge[bend left=30] (c);
		\path[draw, line width=0.8pt] (b) edge[bend left=30] (d);
		\path[draw, line width=0.8pt] (d) edge[bend right=50] (a);
		\path[draw, line width=0.8pt] (a) edge (b) edge (c) edge (d);
		\end{tikzpicture}\caption*{(d)}\end{minipage}}
\subfigure{\begin{minipage}[t]{0.15\linewidth}
	\centering\begin{tikzpicture}[scale=0.8]
		\filldraw[black](0,10) circle (3pt)node[label=left:$u$](a){};
	    \filldraw[black](0,9) circle (3pt)node[](b){};
		\filldraw[black](0,8) circle (3pt)node[](c){};
		\filldraw[black](0,7) circle (3pt)node[label=left:$v$](d){};
					
		\path[draw, line width=0.8pt] (a) edge[bend left=30] (c);
		\path[draw, line width=0.8pt] (b) edge[bend left=30] (d);
		\path[draw, line width=0.8pt] (d) edge[bend left=30] (b);
		\path[draw, line width=0.8pt] (d) edge[bend right=50] (a);
		\path[draw, line width=0.8pt] (a) edge (b) edge (c) edge (d);
		\end{tikzpicture}\caption*{(e)}\end{minipage}}
\subfigure{\begin{minipage}[t]{0.15\linewidth}
		\centering\begin{tikzpicture}[scale=0.8]
		\filldraw[black](0,10) circle (3pt)node[label=left:$u$](a){};
		\filldraw[black](0,9) circle (3pt)node[](b){};
		\filldraw[black](0,8) circle (3pt)node[](c){};
		\filldraw[black](0,7) circle (3pt)node[label=left:$v$](d){};
		
		\path[draw, line width=0.8pt] (a) edge[bend left=30] (c);
		\path[draw, line width=0.8pt] (b) edge[bend left=30] (d);
		\path[draw, line width=0.8pt] (c) edge[bend left=30] (a);
		\path[draw, line width=0.8pt] (d) edge[bend right=50] (a);
		\path[draw, line width=0.8pt] (a) edge (b) edge (c) edge (d);
		\end{tikzpicture}\caption*{(f)}\end{minipage}}
\caption{Semicomplete digraphs that have no good $(u,v)$-pair. The digraph in (e) is isomorphic to the digraph in (f).}
\label{fig4}
\end{figure}

\begin{thm}
  \label{thm:SDbranchchar}
  Let $D=(V,A)$ be a semicomplete digraph  with $u,v\in V$ (possibly $u=v$). Then $D$ has a good $(u,v)$-pair if and only if it satisfies (i) and (ii) below.
  \begin{itemize}
  \item[(i)] For every choice of $z,w\in V$ there are arc-disjoint paths $P_{u,z},P_{w,v}$ in $D$
  \item[(ii)] $D$ is not one of the digraphs in Figure  \ref{fig4}(b)-(f).
    \end{itemize}
  \end{thm}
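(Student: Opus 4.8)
The plan is to prove the two directions separately, with essentially all the work going into the converse. The ``only if'' direction is immediate: if $(B^+_u,B^-_v)$ is a good $(u,v)$-pair then, for every $z,w\in V$, the unique $(u,z)$-path of $B^+_u$ and the unique $(w,v)$-path of $B^-_v$ lie in arc-disjoint subdigraphs, so they witness (i); and a short finite check shows that none of the digraphs in Figure~\ref{fig4}(b)--(f) has a good $(u,v)$-pair --- for instance in (b), the transitive tournament on $\{u,w,v\}$, the arc $uw$ is the only arc into $w$ so it lies in every $B^+_u$, the arc $wv$ is the only arc out of $w$ so it lies in every $B^-_v$, and these forcings in turn force $uv\in B^+_u$ and $uv\in B^-_v$; the other four digraphs are treated the same way --- so $D$ satisfies (ii).

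For the ``if'' direction I would route through an auxiliary structural characterisation of the semicomplete digraphs that have \emph{no} good $(u,v)$-pair: the (necessarily more intricate) semicomplete analogue of the tournament characterisation of \cite{bangJCT51}, in the spirit of Theorem~\ref{sABC}. The feature such a characterisation has is that the obstruction is \emph{local}: it is built from one or two scarce arcs together with the strong-component order of $D$ (or of $D$ minus a cut-arc) and the neighbourhoods $N^{\pm}_D(u),N^{\pm}_D(v)$, and it never involves more than a bounded amount of the digraph in an essential way. Granting this, the equivalence with (i) and (ii) goes as follows: assume (i) and (ii) and suppose, for a contradiction, that $D$ has no good $(u,v)$-pair. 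Condition (i) forces $u$ to reach every vertex and every vertex to reach $v$ (take $w=v$, resp.\ $z=u$, in (i)), so the structural characterisation applies; condition (ii) removes the finitely many small cases, so we are left with an obstruction that genuinely spans the digraph, and from it one reads off a vertex $z$ reachable from $u$ only through a single bottleneck arc $e$ and a vertex $w$ that can reach $v$ only through the same arc $e$. Then every $(u,z)$-path and every $(w,v)$-path use $e$, so no arc-disjoint pair $P_{u,z},P_{w,v}$ exists, contradicting (i). Technically this last inference is packaged by the characterisation of when a semicomplete digraph admits two arc-disjoint paths with prescribed ends, again from \cite{bangJCT51}; its obstruction has exactly the ``unique scarce arc'' shape, which is why the two characterisations line up.

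It remains to prove the structural characterisation, which is the heart of the matter, and I would do it by induction on $|V(D)|$, splitting on arc-strong connectivity. If $D$ is $2$-arc-strong, then by Theorem~\ref{thm:SD2arcstrong} either $D\cong S_4$ --- which has a good $(u,v)$-pair for all $u,v$ --- or $A=A_1\cup A_2$ with $(V,A_1)$ and $(V,A_2)$ spanning and strong, and then $(V,A_1)$ contains an out-branching rooted at $u$ and $(V,A_2)$ an in-branching rooted at $v$, so $D$ is not a counterexample. If $D$ is not strong, then $u$ lies in the initial strong component and $v$ in the terminal one, and, since all arcs between distinct strong components are present and directed forward, one assembles $B^+_u$ from an out-branching of the initial component plus forward arcs and $B^-_v$ from an in-branching of the terminal component plus forward arcs, choosing the ``free'' arcs so the two branchings are arc-disjoint; this works unless there are exactly two components and one of them is a single vertex, in which case the residual conflict is confined to one strong component and the induction hypothesis finishes the job. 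Finally, if $D$ is strong but not $2$-arc-strong, $D$ has a cut-arc $e$; one analyses the strong-component order of $D-e$ and the positions of $u$ and $v$ (using Theorem~\ref{sABC} verbatim when $u=v$) and either constructs a good pair outright or extracts the bottleneck structure above, the small configurations that resist both being exactly Figure~\ref{fig4}(b)--(f).

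The main obstacle is that last case: showing that the failure of a strong, non-$2$-arc-strong semicomplete digraph to have a good $(u,v)$-pair always \emph{localises} to a single arc (or a tiny configuration) despite the chains of forced choices that Figure~\ref{fig4} already exhibits. This forces a careful enumeration of where $u$ and $v$ can sit relative to the components of $D-e$, of the possible coincidences of $u,v$ with the ends of $e$, and of the sizes of the relevant components, verifying arc-disjointness of each constructed pair via Theorem~\ref{edmonds1973} together with the abundance of forward arcs, and checking that no configuration is missed. Matching this enumeration against the arc-disjoint-paths obstruction from \cite{bangJCT51}, as needed in the reduction above, then has to be done in the same level of detail; everything else is bookkeeping.
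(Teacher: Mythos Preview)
Your overall architecture---split on arc-connectivity, invoke Theorem~\ref{thm:SD2arcstrong} for the $2$-arc-strong case, handle the non-strong case via Lemma~\ref{nonstrong-OI}, and analyse a cut-arc in the remaining strong case, with Theorem~\ref{sABC} covering $u=v$---matches the paper's. The paper, however, proves Theorem~\ref{thm:SDbranchchar} \emph{directly} and then derives the structural characterisation (Theorem~\ref{SDgoodpair}) as a corollary, whereas you propose the reverse order. Either order is viable in principle.

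The genuine gap is your description of the obstruction. You claim that failure to have a good $(u,v)$-pair ``always localises to a single arc (or a tiny configuration)'' and that the arc-disjoint-paths obstruction from \cite{bangJCT51} ``has exactly the `unique scarce arc' shape''. This is false. The classification (Definition~\ref{def1}) contains, besides types $A$ and $B$ which do pivot on one backward arc, the types $2\alpha+2$ and $2\alpha+3$ for every $\alpha\ge 1$: these are chains of $2\alpha$ or $2\alpha+1$ backward arcs, arranged so that any $(u,z)$-path and any $(w,v)$-path together must over-use them (Lemma~\ref{type--nopaths}). There is \emph{no} single arc $e$ lying on every $(u,z)$-path and every $(w,v)$-path in these cases. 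Correspondingly, condition (iv) of Theorem~\ref{SDgoodpair} is precisely type $2\alpha+3$ and cannot be rephrased as a single bottleneck. Your step ``from [the obstruction] one reads off a vertex $z$ reachable from $u$ only through a single bottleneck arc $e$ and a vertex $w$ that can reach $v$ only through the same arc $e$'' therefore does not go through; you need the full type classification and Lemma~\ref{type--nopaths} to contradict (i).

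On the constructive side of the cut-arc case, the paper does considerably more than ``analyse the strong-component order of $D-e$''. When $u$ lies in the initial component of $D-e$ but $v$ does not lie in the terminal one (or vice versa), the argument requires Theorem~\ref{Buvwiff} (an out-branching rooted at $u$ arc-disjoint from a prescribed $(w,v)$-path exists iff the path condition holds for every target $z$), then the extension lemmas of Section~\ref{sec:extend} (precisely when a partial out-tree on one side and a partial in-tree on the other can be grown to arc-disjoint branchings covering both), and a separate structural result (Theorem~\ref{thmOI}) for the subcase where $u$ and $v$ already sit in the extreme components. Your sketch names none of these tools; without them the cut-arc case is exactly where the ``careful enumeration'' you allude to would stall, because the abundance-of-forward-arcs heuristic is not enough when one side has only one or two vertices---those are the situations that produce Figure~\ref{fig4}(d)--(f).
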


  It is easy to see that (i) must hold if $D$ has a good $(u,v)$-pair
  and when $D$ has at least 5 vertices the theorem says that (i) is also sufficient.

{\bf The rest of the paper is organized as follows:} We start out with Section \ref{sec:prelim} which contains some extra definitions and results that will be used in the paper. In Section \ref{sec:paths} we unify  results from \cite{bangJCT51}  on arc-disjoint paths in semicomplete digraphs in order to use these when we assume condition (i) of Theorem \ref{thm:SDbranchchar} holds. In Section \ref{sec:extend} we prove some important lemmas on how to extend special arc-disjoint in- and out-trees to larger ones. Then in Section \ref{sec:B+P} we study the problem of finding an out-branching rooted at a specific vertex which is arc-disjoint from a path with prescribed end vertices. Finally we prove Theorem \ref{thm:SDbranchchar} in Section \ref{sec:proof}. We also give an alternative, semingly more involved characterization of semicomplete digraphs without a good $(u,v)$-pair for specified vertices $u,v$. This characterization is used heavily in \cite{bangQTDbranchpaper}.

\section{Further terminology and Preliminaries}\label{sec:prelim}

If a digraph is not strong, then we can label its strong components $D_1,\ldots{},D_p$ such that there is no arc from $D_j$ to $D_i$ when $j>i$. We call such an ordering an \textbf{acyclic ordering} of the strong components of $D$. For a non-strong semicomplete digraph $D$  it is easy to see that the ordering  $D_1,\ldots{},D_p$ is unique and we call $D_1$ (resp., $D_p$) the {\bf initial (resp., terminal)} strong component of $D$.

The set of vertices of a digraph $D=(V,A)$ which can reach (resp., can be reached from) every other vertex in $V$ by a directed path is denoted by $Out(D)$ (resp., $In(D)$).  
Note that  a vertex $v$ belongs to $Out(D)$ (resp., $In(D)$) if and only if it is the root of some out-branching (resp., in-branching) of $D$. Hence  $Out(D)=In(D)$ if and only if $D$ is strong.

\begin{lem}\label{outinstrong}\cite{bangJGT100}
Let $D$ be a  digraph. Then the induced subdigraphs $D\left\langle Out(D)\right\rangle$ and
$D\left\langle In(D)\right\rangle$ are strong.
\end{lem}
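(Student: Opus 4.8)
The plan is to prove something slightly stronger than stated, namely that \emph{every} directed path between two vertices of $Out(D)$ is already contained in $D\langle Out(D)\rangle$ (and symmetrically for $In(D)$); strong connectivity of the two induced subdigraphs then follows immediately from the definitions of $Out(D)$ and $In(D)$.

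First I would fix two distinct vertices $x,y\in Out(D)$. Since $x\in Out(D)$ it can reach $y$, so there is a directed path $P=x v_1 v_2\cdots v_{k-1} y$ in $D$. The one observation that makes everything work is that each internal vertex $v_i$ again lies in $Out(D)$: the suffix $v_i v_{i+1}\cdots y$ of $P$ shows $v_i$ reaches $y$, and $y\in Out(D)$ means $y$ reaches every other vertex of $D$, so by concatenation $v_i$ reaches every other vertex of $D$ as well. Hence all vertices of $P$ belong to $Out(D)$, i.e.\ $P$ is a path of $D\langle Out(D)\rangle$ from $x$ to $y$. As $x,y$ were an arbitrary ordered pair of distinct vertices of $Out(D)$, this shows $D\langle Out(D)\rangle$ is strong (the claim being vacuous when $|Out(D)|\le 1$).

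For $D\langle In(D)\rangle$ I would invoke the converse digraph $D'$ obtained from $D$ by reversing every arc: reversing arcs preserves strong connectivity and swaps the roles of ``reaches'' and ``is reached from'', so $Out(D')=In(D)$ and $D'\langle Out(D')\rangle$ is the converse of $D\langle In(D)\rangle$. Applying the first part to $D'$ gives that $D'\langle Out(D')\rangle$ is strong, hence so is $D\langle In(D)\rangle$. (Alternatively one repeats the argument verbatim: for distinct $x,y\in In(D)$ take an $(x,y)$-path, which exists because $y\in In(D)$; each internal vertex is reached from $x$, and $x\in In(D)$ is reached from all of $V$, so each internal vertex lies in $In(D)$.)

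There is essentially no obstacle here; the proof is a one-step reachability concatenation. The only point requiring a moment's care is the degenerate case $Out(D)=\emptyset$ (or a single vertex), where $D\langle Out(D)\rangle$ is ``strong'' only by the usual convention that a digraph on at most one vertex is strong, and the analogous remark for $In(D)$.
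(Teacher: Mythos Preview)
Your argument is correct: the key observation that any vertex lying on a directed path into a vertex of $Out(D)$ is itself in $Out(D)$ immediately yields the result, and the dual (or converse-digraph) argument handles $In(D)$.

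The paper does not give its own proof of this lemma; it is simply quoted from \cite{bangJGT100}. So there is nothing to compare against, and your short reachability-concatenation argument is a perfectly adequate self-contained proof.
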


We will use the following classical result by Camion. It was originally proved only for tournaments but almost the same proof works for semicomplete digraphs (one can also use the easy fact that every strong semicomplete digraph $D=(V,A)$ on at least 3 vertices contains a spanning strong subtournament $T=(V,A')$ where $A'\subseteq A$).

\begin{thm}\cite{camionCRASP249}\label{thm:camion}
Every strong semicomplete digraph of order at least 2 has a hamiltonian cycle.
\end{thm}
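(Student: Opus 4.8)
The plan is to prove this by the classical longest-cycle argument, carried out directly for semicomplete digraphs (which subsumes the tournament case). First I would dispose of the base case: a strong digraph on exactly two vertices $x,y$ must contain both arcs $xy$ and $yx$, so the digon $xyx$ is a Hamiltonian cycle; and any strong digraph on $\ge 2$ vertices contains at least one cycle (concatenating an $(x,y)$-path and a $(y,x)$-path yields a closed walk, hence a cycle), so a longest cycle exists. Let $C=v_1v_2\cdots v_kv_1$ be a longest cycle of $D$, with indices read cyclically, and suppose for contradiction that $C$ is not spanning, so $R=V\setminus V(C)\neq\emptyset$.

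The key step is an \emph{insertion} argument: if some $u\in R$ satisfies $v_i\to u$ and $u\to v_{i+1}$ for two consecutive cycle vertices, then replacing the arc $v_iv_{i+1}$ by the detour $v_i\,u\,v_{i+1}$ yields a cycle of length $k+1$, contradicting maximality; hence no such $u$ exists. From this I would derive the central claim: every $u\in R$ either dominates all of $V(C)$ or is dominated by all of $V(C)$, and in either case there is no digon between $u$ and $C$. To see this, observe that the no-insertion condition forbids, for every $i$, the simultaneous occurrence of $v_i\to u$ and $u\to v_{i+1}$. Thus as soon as $v_i\to u$ holds for some $i$, semicompleteness forces $v_{i+1}\to u$ with $u\not\to v_{i+1}$, and iterating around $C$ shows that $v_j\to u$ with no reverse arc for all $j$, i.e.\ $C$ dominates $u$; if instead $v_i\to u$ for no $i$, then $u\to v_i$ for all $i$ and $u$ dominates $C$. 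In a tournament this dichotomy is the familiar one; the only extra care for semicomplete digraphs is precisely the bookkeeping that rules out digons, which is the small gap between the original tournament statement and the version claimed here.

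Given the claim, write $R=R_1\cup R_2$, where $R_1$ is the set of vertices dominated by $C$ and $R_2$ the set of vertices dominating $C$ (these are disjoint, as there are no digons). I would then finish with two more cycle-lengthening and connectivity observations. If there were an arc $w\to u$ with $w\in R_1$ and $u\in R_2$, then since $v_k\to w$, $w\to u$ and $u\to v_1$, the cycle $v_1\cdots v_k\,w\,u\,v_1$ has length $k+2$, a contradiction; hence every arc between $R_1$ and $R_2$ runs from $R_2$ to $R_1$. Now invoke strong connectivity. If $R_2\neq\emptyset$, then no vertex of $C\cup R_1$ has an arc into $R_2$ (vertices of $C$ and $R_1$ are dominated by $R_2$), so $R_2$ cannot be reached from $C$, contradicting strongness; symmetrically, if $R_1\neq\emptyset$ then $C$ cannot be reached from $R_1$. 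Since $R=R_1\cup R_2\neq\emptyset$, one of these cases applies, and we contradict $R\neq\emptyset$. Hence $k=n$ and $C$ is Hamiltonian.

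An alternative route, consistent with the parenthetical remark preceding the statement, is to first pass to a spanning strong subtournament $T=(V,A')$ of $D$ (which exists when $n\ge 3$) and apply the tournament form of the theorem to $T$; any Hamiltonian cycle of $T$ is one of $D$. I expect the main obstacle to be not the overall architecture, which is standard, but the careful handling of digons in the central claim: one must confirm that $2$-cycles between $R$ and $C$ never escape the insertion argument, which is exactly the point at which the semicomplete setting differs from the original tournament statement.
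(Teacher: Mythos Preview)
Your proof is correct. Note, however, that the paper does not actually supply a proof of this statement: Theorem~\ref{thm:camion} is quoted from the literature, accompanied only by the remark that the original tournament proof carries over and that one may alternatively pass to a spanning strong subtournament. Your longest-cycle argument is precisely the adaptation the paper alludes to, and you also mention the subtournament reduction, so your write-up is fully in line with (indeed, more detailed than) what the paper provides.
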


The following extension of Redei's Theorem \cite{redeiALS7} is easy to prove from Theorem \ref{thm:camion} and the fact that if $D$ is not strong and $D_1,\ldots{},D_p$, $p\geq 2$ is the unique ordering of the strong components of $D$ then every vertex of $V(D_i)$ dominates every vertex of $V(D_j)$ when $1\leq i<j\leq p$.
\begin{lem}
  Every strong semicomplete digraph $D$  has a hamiltonian path starting at any prescribed vertex $x$.
  \JBJ{If $D$ is a non-strong semicomplete digraph and  $D_1,\ldots{},D_p$, $p\geq 2$ is the unique ordering of its strong components}, then $D$  has an $(x,y)$-hamiltonian path for every choice of vertices $x\in V(D_1)$ and $y\in V(D_p)$.
  \end{lem}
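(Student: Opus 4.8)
The plan is to reduce both statements to Camion's theorem (Theorem \ref{thm:camion}).

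\medskip

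\noindent\textbf{Strong case.} If $|V(D)|=1$ the single vertex is the desired path, so assume $|V(D)|\geq 2$. Apply Theorem \ref{thm:camion} to obtain a hamiltonian cycle $C=x_1x_2\cdots x_nx_1$ of $D$, and relabel the indices cyclically so that the prescribed vertex $x$ is $x_1$. Then $x_1x_2\cdots x_n$ is a hamiltonian path of $D$ starting at $x$. Reading the same cycle starting just \emph{after} a prescribed vertex $y=x_j$ (that is, $x_{j+1}x_{j+2}\cdots x_n x_1\cdots x_j$) shows in the same way that a strong semicomplete digraph of order at least $2$ also has a hamiltonian path ending at any prescribed vertex; for order $1$ this is again trivial. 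I will record both facts, since both are needed below.

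\medskip

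\noindent\textbf{Non-strong case.} Each strong component $D_i$ is itself a strong semicomplete digraph, so by the strong case each $D_i$ has a hamiltonian path $P_i$, and we may moreover choose $P_1$ to start at the prescribed vertex $x\in V(D_1)$ and $P_p$ to end at the prescribed vertex $y\in V(D_p)$, while $P_2,\dots,P_{p-1}$ are taken to be arbitrary hamiltonian paths of their components. Now concatenate $P_1,P_2,\dots,P_p$ in this order. Since $D_1,\dots,D_p$ is the (unique) acyclic ordering of the strong components and, as noted in the paragraph preceding the statement, every vertex of $V(D_i)$ dominates every vertex of $V(D_j)$ whenever $i<j$, the terminal vertex of $P_i$ dominates the initial vertex of $P_{i+1}$ for each $i$. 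Hence the concatenation is a single path, and since the $P_i$ together span all of $V(D)$ it is an $(x,y)$-hamiltonian path of $D$.

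\medskip

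There is essentially no obstacle here; the only points requiring care are the degenerate cases of a one-vertex digraph or a one-vertex strong component (where Theorem \ref{thm:camion} does not literally apply but the required path is immediate), and the observation that a single hamiltonian cycle simultaneously yields hamiltonian paths with a prescribed start and with a prescribed end, which is what lets us fix both endpoints $x$ and $y$ in the non-strong case.
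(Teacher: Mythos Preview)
Your proof is correct and follows exactly the approach the paper sketches: apply Camion's theorem to get a hamiltonian cycle and cut it at the prescribed vertex for the strong case, then concatenate hamiltonian paths of the strong components using the fact that $V(D_i)$ dominates $V(D_j)$ for $i<j$ in the non-strong case. The paper does not spell out the details, but your argument is precisely what is intended.
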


\begin{thm}\cite{bangJGT100}
Let $D$ be a semicomplete digraph of order at least 4 and let $v$ be arbitrary vertex of $In(D)$. There is a pair of arc-disjoint out- and in-branchings in $D$ such that the in-branching is rooted at $v$ if and only if $D$ is not a digraph such that both $v$ and its in-neighbor has in-degree one.
\end{thm}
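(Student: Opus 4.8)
The plan is to prove the two implications separately; the ``only if'' part is short, and essentially all the work sits in the ``if'' part, which I would organise according to how arc-strong $D$ is.

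\emph{Only if.} Suppose $v$ has in-degree one with unique in-neighbour $w$, and $w$ also has in-degree one, with unique in-neighbour $x$. I would first observe that $x\notin\{v,w\}$: clearly $x\ne w$, and if $x=v$ then $\{v,w\}$ would be an initial strong component of $D$ receiving no arc from $V\setminus\{v,w\}$, contradicting $v\in In(D)$ (this uses $|V|\ge 3$). Since $wv$ is the only arc into $v$ and $xw$ the only arc into $w$, the unique $(x,v)$-path in $D$ is $x\to w\to v$, so every in-branching rooted at $v$ contains both $wv$ and $xw$. On the other hand, an out-branching is spanning, so it either is not rooted at $v$, in which case its path to $v$ ends with $wv$, or is rooted at $v$, in which case its path to $w$ ends with $xw$. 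Either way it shares an arc with every in-branching rooted at $v$, so no good pair with in-branching rooted at $v$ can exist.

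\emph{If.} Assume $D$ is not the exceptional digraph. I would first dispose of the case $d^-_D(v)=1$. If $D$ is not strong then $v$ lies in the terminal strong component $D_p$, and then $d^-_D(v)\ge 2$ automatically (an in-arc from $D_{p-1}$ and, if $|D_p|\ge 2$, one inside $D_p$; and $d^-_D(v)=|V|-1$ if $|D_p|=1$). Hence $d^-_D(v)=1$ forces $D$ strong. In that case the unique in-arc $wv$ belongs to every in-branching rooted at $v$, so any good pair must have its out-branching rooted at $v$ as well, and it suffices to produce a good $(v,v)$-pair. By Theorem \ref{sABC} such a pair exists unless the exceptional structure there holds, and I would check that in a strong semicomplete digraph that structure forces the in-neighbour $w$ of $v$ to have in-degree one (here $N^-_D(v)=B\cup C$ has size one, and the only subcase that does not collapse immediately yields exactly one arc into $w$), contradicting our hypothesis.

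It remains to treat $d^-_D(v)\ge 2$. If $D$ is $2$-arc-strong I would invoke Theorem \ref{thm:SD2arcstrong}: either $D\cong S_4$, which has a good $(u,v)$-pair for all $u,v$, or $A=A_1\cup A_2$ with $(V,A_1)$ and $(V,A_2)$ both spanning and strong, and then an out-branching of $(V,A_1)$ together with an in-branching of $(V,A_2)$ rooted at $v$ is the required pair. The remaining situation — $d^-_D(v)\ge 2$ but $D$ not $2$-arc-strong, i.e.\ $D$ non-strong or possessing a one-arc-cut — is the heart of the proof and the step I expect to be the main obstacle. Here $D$ is very rigid: a non-strong semicomplete digraph is a transitive chain $D_1,\dots,D_p$ of strong components, and a strong semicomplete digraph with a one-arc-cut $(S,\bar S)$ has all arcs between $S$ and $\bar S$ oriented the same way except for a single reverse arc. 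Exploiting this, I would take the out-branching to be (essentially) a concatenation of Hamilton paths of the strong components terminating at $v$ and using exactly one arc into $v$, and then hand-build an arc-disjoint in-branching rooted at $v$, using the hypothesis $d^-_D(v)\ge 2$ together with the abundance of arcs between components to reroute the remaining vertices to $v$. The genuinely delicate points are the shape of the terminal component (a singleton, a $2$-cycle, or a $3$-cycle each needs its own construction) and the location of $v$ relative to the one-arc-cut; I would settle these by a combination of direct construction and induction on $|V|$ (for instance by deleting a suitably chosen sink of $D-v$ and verifying the resulting instance is again non-exceptional), with the few smallest cases ($|V|=4$) checked by hand.
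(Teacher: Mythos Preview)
The paper does not contain a proof of this statement: it is quoted from \cite{bangJGT100} as a preliminary result and used without proof. So there is no ``paper's own proof'' to compare against; I can only comment on your plan on its own terms.

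Your ``only if'' direction is correct as written. For the ``if'' direction, the reductions you propose are sound: the case $d^-_D(v)=1$ really does force $D$ to be strong and then to admit a good $(v,v)$-pair via Theorem~\ref{sABC} (in the notation there one gets $B\cup C=\{w\}$; if $B=\{w\}$ and $C=\emptyset$ the unique arc $e=e'$ is the only arc into $w$, contradicting $d^-(w)\ge 2$; if $C=\{w\}$ and $B=\emptyset$ then $v$ dominates all of $V\setminus\{v\}$, so the star out of $v$ is arc-disjoint from any in-branching rooted at $v$). The $2$-arc-strong case via Theorem~\ref{thm:SD2arcstrong} is also fine.

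Where your plan is heavier than necessary is the ``heart'' case. The non-strong subcase you propose to handle by hand is already covered verbatim by Lemma~\ref{nonstrong-OI} (also quoted from \cite{bangJGT100}), so only the subcase ``$D$ strong, not $2$-arc-strong, $d^-_D(v)\ge 2$'' actually needs work. For that subcase your sketch (Hamiltonian path inside components, exploit the single backward arc across the cut, small cases by hand) is a reasonable line of attack and can be made to go through, but as written it is only a plan: the ``hand-build an arc-disjoint in-branching'' step is exactly where all the content lies, and you have not yet shown how the hypothesis $d^-_D(v)\ge 2$ is spent. This is the one genuine gap; the rest of your outline is correct.
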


\begin{lem}\cite{bangJGT100}\label{nonstrong-OI}
Every non-strong semicomplete digraph of order at least 4 has a good $(u, v)$-pair for every choice of $u \in Out(D)$ and $v \in In(D)$.
\end{lem}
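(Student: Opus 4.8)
The plan is to argue via the strong component decomposition of $D$. Let $D_1,\ldots,D_p$ (with $p\ge 2$) be the unique acyclic ordering of the strong components of the non-strong semicomplete digraph $D$. Since every vertex of $D_i$ dominates every vertex of $D_j$ whenever $i<j$, one checks that $Out(D)=V(D_1)$ and $In(D)=V(D_p)$, that each $\induce{D}{V(D_i)}$ is strong and semicomplete, and that between $D_i$ and $D_j$ with $i<j$ \emph{all} $|V(D_i)|\cdot|V(D_j)|$ arcs are present and directed from $D_i$ to $D_j$. So it suffices to produce a good $(u,v)$-pair for arbitrary $u\in V(D_1)$ and $v\in V(D_p)$, and I would split into three cases according to $n_p:=|V(D_p)|$ and $m:=|V(D_1)|+\cdots+|V(D_{p-1})|$.

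\emph{Generic case $m\ge 2$ and $n_p\ge 2$.} Put $D'=\induce{D}{V(D_1)\cup\cdots\cup V(D_{p-1})}$. Since $u\in V(D_1)=Out(D')$, fix an out-branching $S'$ of $D'$ rooted at $u$, and since $D_p$ is strong fix an in-branching $T_p$ of $D_p$ rooted at $v$. I let $B^+_u$ enter $D_p$ at \emph{every} vertex: choose for each $w\in V(D_p)$ an arc $g(w)w$ with $g(w)\in V(D')$ and set $B^+_u=S'\cup\{g(w)w:w\in V(D_p)\}$, an out-branching rooted at $u$ that uses no arc inside $D_p$. Symmetrically I funnel $B^-_v$ forward: choose for each $x\in V(D')$ an arc $xf(x)$ with $f(x)\in V(D_p)$ and set $B^-_v=\{xf(x):x\in V(D')\}\cup T_p$, an in-branching rooted at $v$ that uses no arc inside any $D_i$ with $i<p$. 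Then $B^+_u$ and $B^-_v$ can only conflict on the cross-arcs into $D_p$, i.e. on a pair $(x,w)$ with $g(w)=x$ and $f(x)=w$; using $m\ge 2$ and $n_p\ge 2$ one takes $f$ constant except at one vertex and then fixes the at most two relevant values of $g$ so that no such pair occurs. Letting $B^+_u$ hit all of $V(D_p)$ is the crucial point: it keeps the entire arc set of $D_p$ available for $B^-_v$, and so avoids ever having to pack arc-disjoint spanning out- and in-trees inside one component, which genuinely fails when that component is a directed $3$-cycle.

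\emph{Degenerate cases.} If $n_p=1$, say $D_p=\{v\}$, then $D'=D-v$ has $n-1\ge 3$ vertices, $u\in Out(D')$, and an out-branching $S'$ of $D'$ rooted at $u$ uses $n-2$ arcs while $D'$, being semicomplete on $\ge 3$ vertices, has strictly more; hence there is an arc $x_0y_0\in A(D')\setminus A(S')$, and one checks directly that $B^+_u=S'\cup\{x_0v\}$ and $B^-_v=\{xv:x\in V(D')\setminus\{x_0\}\}\cup\{x_0y_0\}$ are arc-disjoint branchings with the required roots. If instead $n_p\ge 2$ and $m=1$, then $p=2$ and $|V(D_1)|=1$, so $Out(D)=\{u\}$; here $v$ has an in-neighbour inside the strong digraph $D_2$ (which has at least three vertices) as well as $u$ as an in-neighbour, so $d^-_D(v)\ge 2$, $D$ is not of the exceptional form excluded in the theorem stated just above, and that theorem yields a good pair whose in-branching is rooted at $v$ and whose out-branching is necessarily rooted at the unique vertex of $Out(D)$, namely $u$.

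I expect the real work (and the only genuine subtlety) to lie in the generic case: choosing the cross-arcs $f$ and $g$ so that the two branchings remain arc-disjoint, and recognising in advance that $B^+_u$ should fan out into \emph{all} of $D_p$ rather than route through a single spanning out-tree of $D_p$. Once this is set up, the two degenerate cases peel off routinely, the case $m=1$ in particular reducing immediately to the preceding theorem because there $Out(D)$ consists of a single vertex.
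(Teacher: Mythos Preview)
The paper does not actually prove this lemma: it is imported from \cite{bangJGT100} and stated without proof, so there is no in-paper argument to compare against. Judged on its own, your argument is correct. The generic case $m\ge 2$, $n_p\ge 2$ works exactly as you outline: with $w_0\neq w_1\in V(D_p)$ and $x_0\neq x_1\in V(D')$, setting $f(x_0)=w_1$, $f(x)=w_0$ otherwise, and $g(w_0)=x_0$, $g(w_1)=x_1$ (arbitrary elsewhere) makes the two families of cross-arcs disjoint, while $S'$ and $T_p$ live in disjoint vertex sets. The case $n_p=1$ is fine because $\binom{n-1}{2}>n-2$ for $n\ge 4$, so the spare arc $x_0y_0$ exists; and in the case $m=1$ your reduction to the preceding theorem is legitimate since any out-branching must be rooted in $Out(D)=\{u\}$ and $d^-_D(v)\ge 2$ rules out the exceptional configuration. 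The only stylistic remark is that the sentence ``fixes the at most two relevant values of $g$'' deserves to be written out explicitly as above, since that is where the hypotheses $m\ge 2$ and $n_p\ge 2$ are both used.
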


\begin{lem}\label{OutbranPath}
Let $D$ be a digraph and let $u,v$ be two vertices of $D$ such that \JBJ{$u\in Out(D)$.} Suppose that $D$ has no out-branching  $B_u^+$ which is arc-disjoint from some $(u,v)$-path. Then there exists a partition $V_1,V_2$ of $V(D)$ such that $v\in V_1,u\in V_2$ and $d_D^+(V_2)=1$.
\end{lem}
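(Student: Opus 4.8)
The plan is to reformulate the statement as a cut condition and then prove the contrapositive of the resulting equivalence by induction, with Edmonds' Branching Theorem (Theorem \ref{edmonds1973}) as the engine in the highly connected case.

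First recall that a digraph $D'$ on vertex set $V$ has an out-branching rooted at $u$ if and only if $d^-_{D'}(Y)\ge 1$ for every $\emptyset\neq Y\subseteq V-u$ (this is just the characterisation of $Out(D')$ recalled above). Applying this with $D'=D\setminus A(P)$ for a $(u,v)$-path $P$, the hypothesis that $D$ has no out-branching $B^+_u$ arc-disjoint from a $(u,v)$-path says exactly: for every $(u,v)$-path $P$ there is a nonempty $Y\subseteq V-u$ with $d^-_{D\setminus A(P)}(Y)=0$. Moreover $u\in Out(D)$ already forces $d^-_D(X)\ge 1$ for every nonempty $X\subseteq V-u$, so the conclusion of the lemma (a partition with $d^+_D(V_2)=1$) is equivalent to the existence of a set $X$ with $v\in X\subseteq V-u$ and $d^-_D(X)=1$. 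Hence it suffices to prove: if $u\in Out(D)$ and $d^-_D(X)\ge 2$ for every $X$ with $v\in X\subseteq V-u$, then $D$ has an out-branching rooted at $u$ that is arc-disjoint from some $(u,v)$-path.

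I would prove this last statement by induction on $|V(D)|+|A(D)|$. If in addition $d^-_D(Y)\ge 2$ for \emph{every} nonempty $Y\subseteq V-u$, then Theorem \ref{edmonds1973} supplies two arc-disjoint out-branchings $B_1,B_2$ rooted at $u$, and any $(u,v)$-path contained in $B_1$ is arc-disjoint from $B_2$. Otherwise there is a nonempty $Z\subseteq V-u$ with $d^-_D(Z)=1$; by hypothesis $v\notin Z$, and we let $e=ab$ be the unique arc entering $Z$ (so $a\notin Z$, $b\in Z$). A short uncrossing step shows $v$ is still reachable from $u$ in $D\setminus Z$: otherwise pick $W$ with $v\in W\subseteq V\setminus Z$, $u\notin W$, $d^-_{D\setminus Z}(W)=0$; then all in-arcs of $W$ leave $Z$, so $d^-_D(W\cup Z)=d^-_D(Z)-(\#\text{arcs from }W\text{ to }Z)\le 1$, contradicting $d^-_D(W\cup Z)\ge 2$. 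Now contract $Z$ to a single vertex $z$, obtaining $D'$; one checks (matching a set $X'\subseteq V(D')-u$ with $X'$ itself, or with $X'\cup Z$ when $z\in X'$) that $u\in Out(D')$ and that $D'$ again satisfies the hypothesis. By induction $D'$ has an out-branching $\bar B$ rooted at $u$ arc-disjoint from a $(u,v)$-path $\bar P$; since $az$ is the only in-arc of $z$ in $D'$, it lies in $\bar B$, so arc-disjointness forces $\bar P$ to avoid $z$, i.e. $\bar P$ lives in $D\setminus Z$. Replacing $z$ in $\bar B$ by an out-branching of $D\left\langle Z\right\rangle$ rooted at $b$ (which exists because every vertex of $Z$ is reached from $u$ through $e$, hence $b\in Out(D\left\langle Z\right\rangle)$) and restoring $e$ gives an out-branching of $D$ rooted at $u$ arc-disjoint from $\bar P$.

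The delicate point is the case $|Z|=1$, where contracting $Z$ does not decrease the size of $D$. Then $b$ has a unique in-neighbour $a$, and instead of contracting one suppresses $b$ (delete $b$, add an arc $ac$ for each out-neighbour $c$ of $b$), after verifying that this preserves both hypotheses and that a good pair pulls back to $D$ — or one absorbs all such one-vertex cuts into the base of the induction. I expect the bulk of the technical work to be exactly this bookkeeping — that the reduced digraph inherits the two conditions and that the resulting out-branching/path pair lifts correctly — while the conceptual content is the reformulation as a cut condition together with the appeal to Theorem \ref{edmonds1973}.
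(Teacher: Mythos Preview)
There is a genuine gap, and in fact the statement as written (for arbitrary digraphs) is false. Take $V=\{u,a,b,v\}$ with arc set $\{ua,av,ub,bv\}$: here $u\in Out(D)$ and every $V_1$ with $v\in V_1\subseteq V-u$ has $d^-_D(V_1)\ge 2$, yet neither out-branching rooted at $u$ is arc-disjoint from either $(u,v)$-path. Your contraction step is fine when $|Z|\ge 2$, but the ``delicate'' case $|Z|=1$ is not bookkeeping, it is an actual obstruction. After suppressing the single vertex (call it $p$, with unique in-arc $ap$), any lifted out-branching must use $ap$ in order to cover $p$, while the lifted $(u,v)$-path may be forced through $ap$ as well: this is exactly what happens in the four-vertex example above with $Z=\{a\}$ and $ap=ua$. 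So the suppression cannot be made to preserve arc-disjointness, and the induction does not close.

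The lemma is only applied in the paper to semicomplete digraphs, and the paper's own proof tacitly uses this hypothesis. It proceeds directly, without induction: if $D$ had two arc-disjoint out-branchings from $u$ we would be done, so Edmonds yields a set $U_1\not\ni u$ with $d^-_D(U_1)=1$ via an arc $xy$. If $v\in U_1$ we are finished; if $v\in U_2=V\setminus U_1$ then arc-disjoint $(u,x)$- and $(u,v)$-paths inside $D\langle U_2\rangle$ would extend to the desired pair in $D$, using that $y$ dominates all of $U_2-x$ --- which is precisely where semicompleteness is needed. Hence no such pair of paths exists, and Menger in $D\langle U_2\rangle$ (together with $x\notin U$, so no arc of the cut leaves toward $U_1$) gives the required cut in $D$. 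Your reformulation and the Edmonds base case are correct; the fix is to add the semicomplete assumption and argue directly as above rather than by contraction.
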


\begin{proof}
By the assumption, there are no two arc-disjoint out-branchings rooted at $u$ in $D$. It follows by Edmonds' branching theorem (Theorem \ref{edmonds1973}) that there exists a partition $U_1,U_2$ of $V(D)$ such that $u\in U_2$ and $d_D^-(U_1)=d_D^+(U_2)\leq 1$. Since $D$ contains an out-branching with root $u$ and $u\in U_2$, we have $d_D^+(U_2)=1$, moreover, let $xy$ be the only arc leaving $U_2$, then $y$ dominates all vertices of $U_2-x$ and it can reach all vertices of $U_1$.

If $v\in U_1$, then $U_1,U_2$ is the desired partition. So we may assume that $v$ belongs to $U_2$. By the assumption of the lemma, there can be  no pair of arc-disjoint $(u,x)$- and $(u,v)$-paths in $D\left\langle  U_2\right\rangle$. Otherwise, using that $y$ dominates all of $U_2-x$, \JBJ{it is easily seen that $D$ has an out-branching $B_u^+$ which is arc-disjoint from some $(u,v)$-path}, contradiction. Thus by Menger's theorem, there exists a proper subset $U$ of $U_2$ such that $u\in U,\{v,x\} \subseteq U_2- U$ and $d_D^+(U)=1$. Then $V_2=U$ and $V_1=V(D)-U$ is the desired partition.
\end{proof}

\section{Arc-disjoint paths in semicomplete digraphs}\label{sec:paths}

In this section we unify and slightly generalize results on arc-disjoint paths from \cite{bangJCT51} in order to use these in the next sections.

\begin{definition}\label{def1}
Let $D=(V,A)$ be a semicomplete digraph and let $u,w,v$ be three vertices of $D$. The 4-tuple $(D,u,w,v)$ is said to be of 

{\bf type $A$}, if there exists a partition $V_1, V_2$ of $V$ such that $v\in V_1$, $u,w\in V_2$ and there is exactly one arc from $V_2$ to $V_1$. (Note that in this type $D$ may be non-strong.)

{\bf type $B$}, if there exists a partition $V_1, V_2, V_3$ of $V$ such that $u,v\in V_2$, $w\in V_3$ and all arcs between $V_i$ and $V_j$ with $i<j$ \JBJ{go} from $V_i$ to $V_j$ \JBJ{except for precisely one arc}  which goes from the terminal component of $D\left\langle  V_3\right\rangle$ to the initial component of $D\left\langle V_1\right\rangle$.

{\bf type $2\alpha+2$}, for some $\alpha\geq 1$ if there exists a partition $V_1,\ldots,V_{2\alpha+2}$ of $V$ such that $v\in V_2, w\in V_{2\alpha+1}, u\in V_{2\alpha+2}$ and all arcs between $V_i$ and $V_j$ with $i<j$ \JBJ{go} from $V_i$ to $V_j$ with the following exceptions. There exists precisely one arc from $V_{i+2}$ to $V_{i}$ for all $i\in[2\alpha]$ and it goes from the terminal component of $D\left\langle  V_{i+2}\right\rangle$ to the initial component of $D\left\langle  V_i\right\rangle$.

{\bf type $2\alpha+3$}, for some $\alpha\geq 1$ if there exists a partition $V_1,\ldots,V_{2\alpha+3}$ of $V$ such that $v\in V_2, u\in V_{2\alpha+2}, w\in V_{2\alpha+3}$ and all arcs between $V_i$ and $V_j$ with $i<j$ \JBJ{go} from $V_i$ to $V_j$ with the following exceptions. There exists precisely one arc from $V_{i+2}$ to $V_{i}$ for all $i\in[2\alpha+1]$  and it goes from the terminal component of $D\left\langle  V_{i+2}\right\rangle$ to the initial component of $D\left\langle  V_i\right\rangle$.
\end{definition} 

\begin{figure}[H]
	\centering
	\subfigure{
		\begin{minipage}[t]{0.2\linewidth}
			\centering\begin{tikzpicture}[scale=0.8]
			\draw[line width=1pt] (0,9) ellipse [x radius=20pt, y radius=10pt];
			\draw[line width=1pt] (0,7.3) ellipse [x radius=20pt, y radius=10pt];
			\draw[-stealth,line width=1.5pt] (-1.1,8.5) -- (-1.1,7.5); 
			\filldraw[white](0.5,7.3) circle (3pt)node[](a){}; \filldraw[white](0.5,9) circle (3pt)node[](b){};
			\path[draw, line width=1pt] (a) edge[bend right=40] (b);
			\coordinate [label=center:$u\;w$] () at (0,7.3);
			\coordinate [label=center:$v$] () at (0,9);
			\end{tikzpicture}\caption*{Type $A$}
		\end{minipage}}
	\subfigure{
		\begin{minipage}[t]{0.2\linewidth}
			\centering\begin{tikzpicture}[scale=0.8]
			\draw[line width=1pt] (0,10) ellipse [x radius=20pt, y radius=10pt];
			\draw[line width=1pt] (0,8.6) ellipse [x radius=20pt, y radius=10pt];
			\draw[line width=1pt] (0,7.3) ellipse [x radius=20pt, y radius=10pt];
			\draw[-stealth,line width=1.5pt] (-1.1,9.5) -- (-1.1,7.5); 
			\filldraw[white](0.5,7.3) circle (3pt)node[](a){}; \filldraw[white](0.5,10) circle (3pt)node[](b){};
			\path[draw, line width=1pt] (a) edge[bend right=40] (b);
			\coordinate [label=center:$u\;v$] () at (0,8.6);
			\coordinate [label=center:$w$] () at (0,7.3);
			\end{tikzpicture}\caption*{Type $B$}
		\end{minipage}}
	\subfigure{\begin{minipage}[t]{0.2\linewidth}
			\centering\begin{tikzpicture}[scale=0.8]
			\draw[line width=1pt] (0,10) ellipse [x radius=20pt, y radius=10pt];
			\draw[line width=1pt] (0,9) ellipse [x radius=20pt, y radius=10pt];
			\draw[line width=1pt] (0,8) ellipse [x radius=20pt, y radius=10pt];
			\draw[line width=1pt] (0,7) ellipse [x radius=20pt, y radius=10pt];
			\draw[-stealth,line width=1.5pt] (-1.1,9.5) -- (-1.1,7.5); 
			\filldraw[white](0.5,7.3) circle (0.1pt)node[](b){};
			\filldraw[white](0.5,8.3) circle (0.1pt)node[](c){}; 
			\filldraw[white](0.5,9.3) circle (0.1pt)node[](d){};
			\filldraw[white](0.5,10.3) circle (0.1pt)node[](e){}; 
			\filldraw[white](0.5,6.7) circle (0.1pt)node[](b1){};
			\filldraw[white](0.5,7.7) circle (0.1pt)node[](c1){};
			\filldraw[white](0.5,8.7) circle (0.1pt)node[](d1){};
			\path[draw, line width=0.8pt] (b1) edge[bend right=60] (d);
			\path[draw, line width=0.8pt] (c1) edge[bend right=60] (e);
			\coordinate [label=center:$u$] () at (0,7);
			\coordinate [label=center:$v$] () at (0,9);
			\coordinate [label=center:$w$] () at (0,8);
			\end{tikzpicture}\caption*{Type $2\alpha+2$}
		\end{minipage}}
	\subfigure{\begin{minipage}[t]{0.2\linewidth}
			\centering\begin{tikzpicture}[scale=0.8]
			\draw[line width=1pt] (0,10) ellipse [x radius=20pt, y radius=10pt];
			\draw[line width=1pt] (0,9) ellipse [x radius=20pt, y radius=10pt];
			\draw[line width=1pt] (0,8) ellipse [x radius=20pt, y radius=10pt];
			\draw[line width=1pt] (0,7) ellipse [x radius=20pt, y radius=10pt];
			\draw[line width=1pt] (0,6) ellipse [x radius=20pt, y radius=10pt];
			\draw[-stealth,line width=1.5pt] (-1.1,9.5) -- (-1.1,6.5); 
			\filldraw[white](0.5,7.3) circle (0.1pt)node[](b){};
			\filldraw[white](0.5,8.3) circle (0.1pt)node[](c){}; 
			\filldraw[white](0.5,9.3) circle (0.1pt)node[](d){};
			\filldraw[white](0.5,10.3) circle (0.1pt)node[](e){}; 
			\filldraw[white](0.5,5.7) circle (0.1pt)node[](a1){}; 
			\filldraw[white](0.5,6.7) circle (0.1pt)node[](b1){};
			\filldraw[white](0.5,7.7) circle (0.1pt)node[](c1){}; 
			\filldraw[white](0.5,8.7) circle (0.1pt)node[](d1){};
			\path[draw, line width=0.8pt] (a1) edge[bend right=60] (c);
			\path[draw, line width=0.8pt] (b1) edge[bend right=60] (d);
			\path[draw, line width=0.8pt] (c1) edge[bend right=60] (e);
			\coordinate [label=center:$u$] () at (0,7);
			\coordinate [label=center:$v$] () at (0,9);
			\coordinate [label=center:$w$] () at (0,6);
			\end{tikzpicture}\caption*{Type $2\alpha+3$}
		\end{minipage}}
	\caption{\JBJ{Illustration of Definition \ref{def1}}. The vertex sets \JBJ{$V_1,V_2\ldots{}$} are labeled from top to bottom. The bold arcs indicate that all  arcs not shown going up in the figure are present in the shown direction. The third and fourth digraphs are of type $2\alpha+2$ and $2\alpha+3$ with $\alpha=1$, respectively.}
	\label{fig1}
\end{figure}

\begin{lem}\label{type--nopaths}
  Let $D$ be a semicomplete digraph let $u,w,v$ be three vertices of $D$. Suppose that $(D,u,w,v)$ is one of the types in Definition \ref{def1}, then for any vertex $z\in V_1$, there  is no pair of  arc-disjoint $(u,z)$- and $(w,v)$-paths in $D$.
\end{lem}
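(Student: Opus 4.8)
The plan is to show that in each of the types of Definition~\ref{def1} there is a set $S$ with $z$ (or a vertex separating $z$) on one side, $u$ and $w$ on the other, and too few arcs across it to support both an $(u,z)$-path and a $(w,v)$-path that are arc-disjoint. The unifying idea is a counting/Menger argument: if arc-disjoint $(u,z)$- and $(w,v)$-paths existed, then across any arc-cut $[X,\bar X]$ (arcs leaving $X$) each path that must cross from $X$ to $\bar X$ contributes at least one distinct arc, so $d_D^+(X)$ must be at least the number of such crossings. I would set up the structural cut from each type so that this lower bound is violated.

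First I would handle \textbf{type $A$}. Here $V_2\supseteq\{u,w\}$, $z\in V_1$, and there is exactly one arc from $V_2$ to $V_1$. Both a $(u,z)$-path and a $(w,v)$-path start in $V_2$ and must reach $V_1$ (recall $v\in V_1$ as well), so each uses at least one arc of $[V_2,V_1]$; since $d_D^+(V_2)=1$ these two arcs coincide, contradicting arc-disjointness. For \textbf{type $B$} I would use the cut $X=V_3$: by definition the only arc leaving $V_3$ goes (to $V_1$), and $V_1$ is where $z$ and $v$ live while $w\in V_3$; note $u\in V_2$, so the $(u,z)$-path must enter $V_3\cup V_1$ — more carefully, take $X=V_2\cup V_3$ wait, we need both paths to cross a cut of size~$1$. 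The right cut is the one isolating $V_1$ \emph{from below}: every arc into $V_1$ comes from the single exceptional arc out of the terminal component of $D\langle V_3\rangle$; hence $d_D^-(V_1)=1$, but both the $(u,z)$- and $(w,v)$-paths (with $z,v\in V_1$, $u\in V_2$, $w\in V_3$) must enter $V_1$, again forcing a shared arc.

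For the \textbf{type $2\alpha+2$ and $2\alpha+3$} cases the argument is the same but iterated along the chain of exceptional back-arcs. In type $2\alpha+2$, consider the cut $X=V_3\cup V_4\cup\cdots\cup V_{2\alpha+2}$ that separates $\{u,w\}$ (and everything from $V_3$ onward) from $V_1\cup V_2$. Every arc from $X$ back to $V_1\cup V_2$ is an exceptional back-arc, and by the structure these are: the arc from $V_3$ to $V_1$ and the arc from $V_4$ to $V_2$ — so $d_D^+(X)=2$ in the $\alpha=1$ picture, but the refinement is that we instead pick the cut so only one back-arc crosses. Concretely I would instead use $X=\{V_2,V_4,V_6,\dots\}$ versus $\{V_1,V_3,V_5,\dots\}$ idea? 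No: the clean choice is to take, for the chosen $z\in V_1$, the set $X=V_1$; the arcs entering $V_1$ are exactly the single back-arc from $V_3$ (all other arcs between $V_1$ and higher-indexed sets leave $V_1$). Thus $d_D^-(V_1)=1$. Since $z\in V_1$ and $v\in V_2\not\subseteq V_1$ wait $v\in V_2$ so the $(w,v)$-path need not enter $V_1$. So $V_1$ alone does not work; I instead need $z$ and $v$ separated the same way, which is why the definition places $v\in V_2$ and $w$ deep in the chain.

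The correct uniform cut is therefore $X=V_1\cup V_2$: arcs entering $X$ from the rest are precisely the back-arc from $V_3$ into $V_1$ and the back-arc from $V_4$ into $V_2$, i.e.\ $d_D^-(V_1\cup V_2)=2$; but $z\in V_1$, $v\in V_2$, $u$ and $w$ lie outside $X$, so the two required arc-disjoint paths enter $X$ using these two distinct back-arcs — not yet a contradiction. I would then \emph{descend one level}: inside $D\langle V_3\cup\cdots\cup V_{2\alpha+2}\rangle$ the vertex $x_3$ (head of the back-arc into $V_1$) and $x_4$ (head of the back-arc into $V_2$) play the roles of a new $z,v$ with $u\in V_{2\alpha+2}$, $w\in V_{2\alpha+1}$, and the residual structure is of type $2\alpha$ (or type $A$/$B$ at the base). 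A downward induction on $\alpha$, peeling off $V_1,V_2$ at each step and showing the two paths are forced into ever-narrower pairs of back-arcs until the base case (type $A$ or type $B$) yields a cut of size exactly one, completes the proof; type $2\alpha+3$ is identical except the base of the chain has the odd set $V_{2\alpha+3}$ containing $w$, handled like type $B$. The main obstacle is bookkeeping the induction cleanly: verifying that after removing $V_1,V_2$ the heads of the two back-arcs indeed lie in the initial components of $V_3,V_4$ respectively (so they are reachable/can-reach as needed) and that the remaining partition genuinely satisfies the definition of the smaller type, including the ``terminal component to initial component'' conditions on the surviving back-arcs.
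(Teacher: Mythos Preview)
Your Type~$A$ case is correct and matches the paper. For Type~$B$ you slip: $v\in V_2$, not $V_1$. The cut $d_D^-(V_1)=1$ still works, but the reason the $(w,v)$-path must enter $V_1$ is that $w\in V_3$ and there are no arcs from $V_3$ to $V_2$, so any $(w,v)$-path must pass through $V_1$ via the unique back-arc before reaching $V_2$.

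For types $2\alpha+2$ and $2\alpha+3$ your peeling induction is a genuinely different route from the paper's. The paper argues directly: take any $(u,z)$-path $P$, let $x_{2j+1}y_{2j-1}$ be the \emph{first} odd back-arc along $P$ (one exists since $x_3y_1$ is the only arc into $V_1$), and observe that $P$ must also have used $x_{2j+2}y_{2j}$ to enter $\bigcup_{i\le 2j+1}V_i$ (the only other entry arc is the odd back-arc $x_{2j+3}y_{2j+1}$, excluded by the choice of $j$). Thus both arcs entering $\bigcup_{i\le 2j}V_i$ lie on $P$, and since $w\notin\bigcup_{i\le 2j}V_i\ni v$, no $(w,v)$-path survives in $D-A(P)$. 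This is a one-shot cut argument with no induction.

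Your induction can be made rigorous, but the sketch has real gaps. The key missing step is the assignment of back-arcs: you must first argue that the $(u,z)$-path uses $x_3y_1$ (forced: only arc into $V_1$) and does \emph{not} use $x_4y_2$ (otherwise the $(w,v)$-path cannot enter $V_1\cup V_2$ at all and you are already done), so that the $(w,v)$-path is forced onto $x_4y_2$; only then do the initial segments $u\to x_3$ and $w\to x_4$ lie entirely in $D\left\langle V_3\cup\cdots\right\rangle$, validating the recursive step. Two further corrections: $x_3,x_4$ are the \emph{tails} of the back-arcs and lie in the \emph{terminal} components of $V_3,V_4$, not the initial ones; and the base of the peeling for type $2\alpha+2$ is not Type~$A$ --- after stripping $V_1,V_2$ at $\alpha=1$ you are left with $u\in V_4$, $x_3\in V_3$, and no arc from $V_4$ to $V_3$, so there is simply no $(u,x_3)$-path (for type $2\alpha+3$ the base is indeed Type~$B$).
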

\begin{proof}
	It is not difficult to check that if $(D,u,w,v)$ is of the type $A$ or $B$, then $D$ cannot contain arc-disjoint $(u,z)$- and $(w,v)$-paths as both of the paths must use the arc entering $V_1$.
	
	For the case that $(D,u,w,v)$ is of type $2\alpha+2$ for some $\alpha\geq 1$, we use $x_{i+2}y_i$ to denote the arc from the terminal component of $D\left\langle  V_{i+2}\right\rangle$ to the initial component of $D\left\langle  V_i\right\rangle$. Let $P$ be an arbitrary $(u,z)$-path. Note that the path $P$ must use the arc $x_{2\alpha+2}y_{2\alpha}$ and at least one of the arcs of kind $x_{2k+1}y_{2k-1},~k\in[\alpha]$. Let $x_{2j+1}y_{2j-1},~j\in[\alpha]$ be the first arc of the kind $x_{2k+1}y_{2k-1}$ as we go along $P$ from $u$. In $D-A(P)$ there is no path from $w$ to $\bigcup_{i<2j+1}V_i$, because there are only two arcs, i.e., $x_{2j+2}y_{2j}$ and $x_{2j+1}y_{2j-1}$, entering $\bigcup_{i<2j+1}V_i$ and these two arcs are in $A(P)$. Note that $v\in V_2 \subseteq  \bigcup_{i<2j+1}V_i$ as $j\in[\alpha]$, so there is no $(w,v)$-path in $D-A(P)$ and then $D$ has no arc-disjoint $(u,z)$- and $(w,v)$-paths.
	
	Similarly, it is not difficult to prove that if $(D,u,w,v)$ is of type $2\alpha+3$ for some $\alpha\geq 1$, there cannot exist arc-disjoint $(u,z)$- and $(w,v)$-paths in $D$.
      \end{proof}

      The next two results from \cite{bangJCT51} were only stated and proved for tournaments but it is easy to check that the proofs are also valid for semicomplete digraphs.

\begin{thm}\label{jbjuzvw}\cite{bangJCT51}
Let $D$ be a semicomplete digraph and let $x_1,y_1,x_2,y_2$ be distinct vertices such that $D$ contains an $(x_i,y_i)$-path for all $i\in[2]$. Then $D$ has a pair of arc-disjoint $(x_1,y_1)$- and  $(x_2,y_2)$-paths unless $x_1,y_1,x_2,y_2$ all belong to the same strong component $D_j$ of $D$ and for some $i\in[2]$,  $(D_j,x_i,x_{3-i},y_{3-i})$ is one of the types in Definition \ref{def1} and the vertex $y_i$ belongs to $V_1$.
\end{thm}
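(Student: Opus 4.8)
\emph{Overall strategy.} I would prove the non-trivial (``unless'') direction in contrapositive form: \emph{if $D$ has no pair of arc-disjoint $(x_1,y_1)$- and $(x_2,y_2)$-paths, then $x_1,y_1,x_2,y_2$ all lie in one strong component $D_j$ and $(D_j,x_i,x_{3-i},y_{3-i})$ is of one of the types of Definition~\ref{def1} with $y_i\in V_1$, for some $i\in[2]$}. The converse direction needs nothing new: if the four vertices all lie in $D_j$, then by the acyclic ordering of the strong components any $(x_i,y_i)$-path is confined to $D_j$ (once it leaves $D_j$ it can never return to reach $y_i$), so arc-disjoint paths exist in $D$ iff they exist in $D_j$; and Lemma~\ref{type--nopaths}, applied with $u=x_i$, $w=x_{3-i}$, $v=y_{3-i}$ and $z=y_i\in V_1$, says they do not exist in $D_j$.

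\emph{Reduction to the strong case.} Suppose $D$ is not strong, with acyclic ordering $D_1,\dots,D_p$ of its strong components, and let $x_i\in D_{a_i}$, $y_i\in D_{b_i}$, so $a_i\le b_i$ since an $(x_i,y_i)$-path exists. Because every vertex of an earlier component dominates every vertex of a later one, each path can be made to enter and leave any component it merely passes through at a single, freely chosen vertex, using none of that component's arcs; routing inside an end component of a range is done with the hamiltonian-path lemma following Theorem~\ref{thm:camion}. Going component by component through those shared by the ranges $[a_1,b_1]$ and $[a_2,b_2]$, one checks that \emph{unless all four marked vertices lie in a common component}, at least one of the two paths can be rerouted to avoid every arc the other uses inside a shared component, so an arc-disjoint pair exists. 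Hence we may assume $D$ is strong and the four vertices lie in $V(D)$.

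\emph{The strong case.} Here I would follow the argument of \cite{bangJCT51}, where the tournament version is proved, checking that it uses tournaments only through properties shared by semicomplete digraphs: that any two vertices are adjacent, Camion's theorem (Theorem~\ref{thm:camion}), and its hamiltonian-path corollary. The skeleton is: assuming no arc-disjoint pair, fix a shortest $(x_1,y_1)$-path $P$; since $D-A(P)$ has no $(x_2,y_2)$-path, the set $S$ of vertices reachable from $x_2$ in $D-A(P)$ satisfies $x_2\in S$, $y_2\notin S$, and every arc of $D$ leaving $S$ lies on $P$. Comparing, via semicompleteness, the endpoints of the successive excursions of $P$ out of $S$, together with the shortestness of $P$, one shows that these out-arcs are few and that each runs from the terminal strong component of $D\left\langle S\right\rangle$ to the initial strong component of $D\left\langle V\setminus S\right\rangle$; passing to the residual problem on the resulting smaller layered digraph and iterating (equivalently, inducting on $|V(D)|$) peels off the sets $V_1,V_2,\dots$ of Definition~\ref{def1}. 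Reading off, from the positions of $x_1,y_1,x_2,y_2$ among the layers and from the parity of their number, then identifies which of type~$A$, type~$B$, type~$2\alpha+2$, or type~$2\alpha+3$ has been produced, with $y_i\in V_1$ for the appropriate $i\in[2]$.

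\emph{Main obstacle.} The substantive difficulty is the bookkeeping in the layer-peeling step: one must prove that the exceptional (``upward'') arcs are each unique and sit between a terminal and an initial component of consecutive layers, that no other arc violates the downward orientation between layers, and that the four marked vertices land exactly in the positions prescribed by Definition~\ref{def1} — this is where the shortestness of $P$ and the secondary tie-breaking choice are really used, and where the finitely many placements of $x_1,y_1,x_2,y_2$ must be enumerated. A lighter obstacle is making the shared-component analysis in the non-strong reduction fully explicit, but there the complete one-directional domination between distinct strong components gives enough slack that a single shared component never forces the two paths to collide unless it already contains all four vertices.
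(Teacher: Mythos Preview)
The paper does not actually prove this theorem: it is quoted from \cite{bangJCT51} with the remark that ``the proofs are also valid for semicomplete digraphs'', and then used as a black box. Your proposal is in full agreement with this: you invoke Lemma~\ref{type--nopaths} for the easy direction, reduce to a single strong component, and for the substantive part explicitly say you would follow the argument of \cite{bangJCT51} while checking that only properties common to all semicomplete digraphs are used. That is precisely the paper's stance, so there is nothing to compare; your sketch of the layer-peeling argument and of the non-strong reduction simply spells out what the paper leaves implicit (the latter is handled in more detail in the proof of Theorem~\ref{goodpair-paths}, which subsumes this result).
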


\begin{thm}\label{jbjuvw}\cite{bangJCT51}
Let $D$ be a semicomplete digraph and $x,y,z\in V(D)$ three distinct vertices such that there exist an $(x,y)$- and a $(y,z)$-path in $D$. There exists a pair of arc-disjoint $(x,y)$- and $(y,z)$-paths in $D$ if and only if for every arc $e$ there exists either an $(x,y)$-path or a $(y,z)$-path in $D-e$.
\end{thm}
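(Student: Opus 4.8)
The ``only if'' direction is immediate: given arc-disjoint paths $P$ from $x$ to $y$ and $Q$ from $y$ to $z$, any arc $e$ lies in at most one of $P$ and $Q$, so the other is an appropriate path in $D-e$. I would prove the ``if'' direction by contraposition: assuming $D$ has no pair of arc-disjoint $(x,y)$- and $(y,z)$-paths (while each exists by hypothesis), I want to exhibit a single arc lying on every $(x,y)$-path and on every $(y,z)$-path. A short preliminary observation lets me assume $x,y,z$ lie in one strong component $D_j$ of $D$: otherwise a straightforward case check produces arc-disjoint paths, exploiting that an $(x,y)$-path and a $(y,z)$-path can share arcs only inside the strong component of $y$, that $x$ reaches and $z$ is reached by every vertex of that component, and that a $(y,z)$-path uses no in-arc of $y$. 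Since every $(x,y)$- or $(y,z)$-path of $D$ then lies inside $D_j$, after relabelling I may assume $D$ itself is a strong semicomplete digraph containing $x,y,z$.

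The obstacle to applying Theorem~\ref{jbjuzvw} directly is that its four terminal vertices must be distinct, whereas here the head of the first path and the tail of the second both equal $y$. I would remove this collision by splitting $y$: let $D'$ be obtained from $D$ by adding a new vertex $y'$ that is a twin of $y$ on $V(D)$ (i.e.\ $wy'\in A(D')\iff wy\in A(D)$ and $y'w\in A(D')\iff yw\in A(D)$ for every $w\in V(D)$) together with the single arc $yy'$. Then $D'$ is semicomplete and strong, $x,y,y',z$ are four distinct vertices, and a routine argument -- replace an occurrence of $y'$ on a path by $y$, shortcut at the first repetition of $y$, and use that an in-arc of $y$ and an out-arc of $y$ are distinct arcs -- shows that $D$ has arc-disjoint $(x,y)$- and $(y,z)$-paths if and only if $D'$ has arc-disjoint $(x,y)$- and $(y',z)$-paths. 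Both of the latter paths exist in $D'$, so under our assumption Theorem~\ref{jbjuzvw} yields: for some $i\in[2]$ the tuple $(D',x_i,x_{3-i},y_{3-i})$, where $(x_1,y_1,x_2,y_2)=(x,y,y',z)$, is one of the types of Definition~\ref{def1}, with the vertex $y_i$ in its part $V_1$.

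It then remains to translate this structure back to $D=D'-y'$ and read off the common cut arc. The key leverage is that $y$ and $y'$ are twins of $D'$: in each of the type-partitions $V_1,V_2,\dots$ of $V(D')$ this pins $y$ and $y'$ into very few possible positions -- for instance it forces almost all in-neighbours of $y$ into a single part and almost all out-neighbours into a single part -- so that, after deleting $y'$, one is left with a partition of $V(D)$ of a very constrained shape. Running through the types $A,B,2\alpha+2,2\alpha+3$ and the two choices of $i$, each surviving configuration yields a partition of $V(D)$ to which the arc-tracking argument in the proof of Lemma~\ref{type--nopaths} applies, showing that one specific arc of $D$ -- the unique arc entering $V_1$ in types $A$ and $B$, and, for the longer types, the back-arc that remains relevant once $y'$ is removed and the twin constraints are imposed -- lies on every $(x,y)$-path and on every $(y,z)$-path; Menger's theorem then certifies this arc as a one-arc cut for both path problems. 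I expect this last step to be the main obstacle: checking that the twin constraints are restrictive enough to collapse the several back-arcs of the longer types down to one common cut arc in every surviving case, and keeping the bookkeeping around the split vertex $y'$ under control.
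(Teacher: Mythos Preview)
The paper does not prove Theorem~\ref{jbjuvw}; it is quoted from \cite{bangJCT51} with the remark that the tournament proof there goes through for semicomplete digraphs. The closest the paper comes to an argument is the final paragraph of the proof of Theorem~\ref{goodpair-paths}, which handles the case $x_1=y_2$ --- this \emph{is} Theorem~\ref{jbjuvw} after relabelling $(x,y,z)=(x_2,x_1,y_1)$ --- via Theorem~\ref{sABC}: either $D_j$ has a good $(x_1,x_1)$-pair and the two paths drop out, or the $A,B,C$-structure of Theorem~\ref{sABC} is present and one reads off a type-$B$ partition whose unique exceptional arc is the common cut arc. So the route taken there is through the same-root branching characterisation, not through vertex splitting.

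Your splitting approach is genuinely different and it works; in fact the obstacle you anticipate does not materialise. The twin relation between $y$ and $y'$ is so restrictive that the long types $2\alpha+2$, $2\alpha+3$ (for either $i$) and type $B$ with $i=1$ cannot occur in $D'$ at all. The mechanism is uniform: if $y\in V_a$ and $y'\in V_b$ in a type partition of $D'$, then for any index $j\notin\{a,b\}$ such that the back-arcs $V_b\to V_j$ and $V_j\to V_a$ are both non-special one gets, for every $w\in V_j\cap V(D)$, that $y'w\notin A(D')$ and $wy\notin A(D')$, whence by the twin identities also $wy'\notin A(D')$, contradicting semicompleteness at $\{w,y'\}$; so $V_j=\emptyset$. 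In each of the excluded types this empties a part that is required to contain $x$ or $z$. Only type $A$ (either $i$, giving the same picture) and type $B$ with $i=2$ survive, and in each of those the unique exceptional arc lies in $D$ (its tail cannot be $y'$, else $D$ would have no $(x,y)$-path) and is easily seen to lie on every $(x,y)$-path and every $(y,z)$-path of $D$, using once more that the twin constraint kills all out-arcs of $y$ into the low-index side. So the bookkeeping you feared collapses to two short configurations.
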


Now we \JBJ{prove} the following common generalization of Theorems \ref{jbjuzvw} and \ref{jbjuvw}.
\begin{thm}\label{goodpair-paths}
Let $D$ be a semicomplete digraph and let $x_1,y_2,x_2,y_2$ be four vertices (not necessarily distinct) such that $D$ contains an $(x_i,y_i)$-path for all $i\in[2]$. Then $D$ has a pair of arc-disjoint $(x_1,y_1)$- and $(x_2,y_2)$-paths unless one of the following statements holds.
		
	(i) $D$ is non-strong, $x_1=x_2,y_1=y_2$ and $\{x_1\}, \{y_1\}$ are two consecutive components  in the acyclic ordering of the strong components of $D$.
	
	(ii) The four vertices all belong to the same strong component $D_j$ and for some $i$, $(D_j,x_i,x_{3-i},y_{3-i})$ is one of the types in Definition \ref{def1} and the vertex $y_i$ belongs to $V_1$.
\end{thm}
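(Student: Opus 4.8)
The plan is to deduce Theorem~\ref{goodpair-paths} from the already-established Theorems~\ref{jbjuzvw} and~\ref{jbjuvw} by a case analysis on how many of the four vertices $x_1,y_1,x_2,y_2$ coincide, together with Lemma~\ref{type--nopaths} for the ``unless'' direction. First I would dispose of the easy implication: if one of (i) or (ii) holds, then there is no pair of arc-disjoint paths. For (ii) this is exactly Lemma~\ref{type--nopaths} applied inside $D_j$ (noting that all four vertices lie in $D_j$, so any $(x_1,y_1)$- and $(x_2,y_2)$-paths must stay inside $D_j$). For (i), if $\{x_1\},\{y_1\}$ are consecutive strong components with $x_1$ before $y_1$, then the only arc from $\{x_1\}$ to $\{y_1\}$-side available is forced: every $(x_1,y_1)$-path must leave the initial part, and since $x_1=x_2$, $y_1=y_2$ we need two arc-disjoint $(x_1,y_1)$-paths, which is impossible because $d^-(\{x_1\})=0$ forces both paths to start at $x_1$ but there may only be one arc out of the component containing everything reachable\,; more carefully, I would invoke Theorem~\ref{edmonds1973}: two arc-disjoint $(x_1,y_1)$-paths exist iff $d^-(X)\ge 2$ for all $X\subseteq V-x_1$ with $y_1$-reachability, and the set $X=V\setminus\{x_1\}$ has $d^-(X)=d^+(\{x_1\})$; when $\{x_1\},\{y_1\}$ are consecutive singleton components this in-degree is exactly $1$. (One must also check $x_1$ really can reach $y_1$, which holds since they are consecutive.)

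For the main (``only if'') direction, assume $D$ has no pair of arc-disjoint $(x_1,y_1)$- and $(x_2,y_2)$-paths; I want to show (i) or (ii) holds. Split into cases by the coincidence pattern. \textbf{Case 1: all four distinct.} Then Theorem~\ref{jbjuzvw} applies verbatim and yields conclusion (ii) (note Theorem~\ref{jbjuzvw} already asserts all four vertices lie in a common strong component $D_j$). \textbf{Case 2: exactly one coincidence among the ``cross'' pairs}, i.e. one of $x_1=x_2$, $y_1=y_2$, $x_1=y_2$, $y_1=x_2$ holds but the four values take three distinct vertices. The subcase $y_1=x_2$ (or symmetrically $x_1=y_2$) is precisely the setting of Theorem~\ref{jbjuvw} with $(x,y,z)=(x_1,y_1,y_2)$: that theorem gives a pair of arc-disjoint paths unless there is an arc $e$ with neither an $(x_1,y_1)$- nor a $(y_1,y_2)$-path in $D-e$; I then need to translate ``such a bad arc $e$ exists'' into ``one of the Definition~\ref{def1} types occurs with $y_i\in V_1$''. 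The remaining subcases $x_1=x_2$ or $y_1=y_2$ (with the other endpoints distinct) I would handle by a small direct argument: e.g. if $x_1=x_2=:x$, we are asking for two arc-disjoint out-paths from $x$ to $y_1$ and to $y_2$; if these do not exist, a Menger/Edmonds-type cut argument produces a set $X\ni x$ with $d^+(X)\le 1$ separating $x$ from $\{y_1,y_2\}$, and unravelling the structure of semicomplete digraphs on this cut should reproduce a type-$A$ (or higher) configuration. \textbf{Case 3: $x_1=y_1$ or $x_2=y_2$}, i.e. one of the required paths is trivial (a single vertex). Then that path uses no arcs, so a pair of arc-disjoint paths exists iff the other path exists, which it does by hypothesis — so no bad case arises here and the statement holds vacuously. \textbf{Case 4: two coincidences}, e.g. $x_1=x_2$ and $y_1=y_2$ simultaneously (the ``two arc-disjoint $(x,y)$-paths'' problem): by Theorem~\ref{edmonds1973}/Menger, failure gives a cut $X$ with $x\in X$, $y\notin X$, $d^+(X)=1$ (it is exactly $1$, not $0$, since an $(x,y)$-path exists); then either $D$ is non-strong with $\{x\},\{y\}$ consecutive components, giving (i), or $D\langle X\rangle$ and $D\langle V\setminus X\rangle$ refine into a type-$A$ configuration giving (ii). Other double-coincidence patterns like $x_1=y_2$ and $y_1=x_2$ reduce to asking for arc-disjoint $(x,y)$- and $(y,x)$-paths and can be folded into the $y_1=x_2$ analysis of Case~2.

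The key technical lemma I will need to extract in Case~2 (and reuse in Cases~1,~4) is: \emph{if a semicomplete digraph has a ``bad arc'' $e$ obstructing arc-disjoint paths between two specified (ordered) pairs of vertices, then one of the canonical configurations of Definition~\ref{def1} is present.} I expect this is essentially what the proofs in \cite{bangJCT51} already establish for tournaments, and the remark in the excerpt that ``the proofs are also valid for semicomplete digraphs'' means I can cite it; concretely I would set up an induction on $|V(D)|$, or on the number of arcs, peeling off the strong-component structure (using Lemma~\ref{outinstrong} and Theorem~\ref{thm:camion} to route paths within strong pieces) and, when $D$ is strong, deleting the obstructing arc $e=ab$ and examining where $b$ falls. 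The main obstacle will be the bookkeeping in Case~2 and Case~4: reconciling the cut/arc obstruction produced by Menger or by Theorem~\ref{jbjuvw} with the rather rigid ``exactly one backward arc between $V_{i+2}$ and $V_i$, from terminal component to initial component'' structure demanded by Definition~\ref{def1}, and checking that the special vertex $y_i$ indeed lands in $V_1$. I would therefore structure the write-up so that the distinct-vertex case is a one-line citation of Theorem~\ref{jbjuzvw}, the $y_1=x_2$ case is a short reduction to Theorem~\ref{jbjuvw} plus a structural unpacking of its failure condition, and the degenerate/repeated-vertex cases are handled by direct Edmonds/Menger cut arguments, each time verifying the cut refines into one of the listed types.
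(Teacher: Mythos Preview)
Your case analysis is essentially the paper's: dispose of the ``unless'' direction via Lemma~\ref{type--nopaths}, cite Theorem~\ref{jbjuzvw} when the four vertices are distinct, and for $y_1=y_2$ (dually $x_1=x_2$) use Menger to produce type~$A$. The one genuine difference is the case $x_1=y_2$ (symmetrically $y_1=x_2$). You reach for Theorem~\ref{jbjuvw}; the paper instead applies Theorem~\ref{sABC} to the vertex $x_1$ and reads off type~$B$ from the $A,B,C$-decomposition there. Your route is viable but you stop at ``I then need to translate''. The translation is short and you should write it out: if $e$ is an arc of $D_j$ whose removal kills both $(x_2,x_1)$- and $(x_1,y_1)$-paths, then in the acyclic ordering of the strong components of $D_j-e$ one has $y_1$ strictly before $x_1$ strictly before $x_2$, with the head of $e$ in the initial component and the tail of $e$ in the terminal one; taking $V_2$ to be the component of $x_1$ and $V_1,V_3$ the components before and after gives type~$B$ for $(D_j,x_1,x_2,y_2)$ with $y_1\in V_1$.

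Two loose ends to tighten. First, the ``not all in the same strong component'' situation is only covered by Theorem~\ref{jbjuzvw} when the vertices are distinct; for repeated vertices (in particular your Case~4 with $x_1=x_2$, $y_1=y_2$ in different components) your cut argument yields a size-one cut but not the specific conclusion that $\{x_1\}$ and $\{y_1\}$ are consecutive \emph{singleton} components. The paper handles this up front by using the arc $x_1y_1$ directly and Lemma~\ref{nonstrong-OI} to reduce to $|D'|\le 3$; you should do likewise. Second, your check that (i) obstructs the paths is garbled: the set $V\setminus\{x_1\}$ has $d^-=d^+(\{x_1\})$, which need not be~$1$. The correct observation is that every $(x_1,y_1)$-path must enter $y_1$ from an in-neighbour of $y_1$ reachable from $x_1$, and since in-neighbours of $y_1$ lie in earlier components while $x_1$ can only reach later ones, the only such in-neighbour is $x_1$ itself; hence every $(x_1,y_1)$-path uses the arc $x_1y_1$.
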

\begin{proof} 
	There is clearly no arc-disjoint $(x_1,y_1)$- and $(x_2,y_2)$-paths in $D$ when (i) holds. If (ii) holds, by Lemma \ref{type--nopaths}, there is no arc-disjoint $(x_1,y_1)$- and $(x_2,y_2)$-paths in $D_j$ and consequently no such pair in $D$.
	
	Next we suppose that none of (i) and (ii) holds and show that $D$ has a pair of arc-disjoint $(x_1,y_1)$- and $(x_2,y_2)$-paths. Suppose first that $x_1,y_2,x_2,y_2$ do not all belong to the same strong component. In particular  $D$ is non-strong. Let $D_1,\ldots,D_l~(l\geq 2)$ be  the unique acyclic ordering of the strong components of $D$. If $x_i$ and $y_i$ belong to the same component of $D$ for all $i\in[2]$, then $D$ clearly has the desired   paths. So we may assume that $x_1$ and $y_1$ belong to components $D_p$ and $D_q$ with $q>p$, respectively. Then $x_1y_1\in A(D)$ and $D$ has the desired paths when $x_2\notin D_p$ or $y_2\notin D_q$. Therefore we may assume that $x_1,x_2\in D_p$ and $y_1,y_2\in D_q$. Let $D^{\prime}=D\left\langle V(D_p\cup\cdots\cup D_q)\right\rangle$. If there is a good $(x_1,y_2)$-pair in $D^{\prime}$, then $D^{\prime}$ (and consequently, $D$) has the desired paths. We may assume that $|D^{\prime}|\leq 3$ by Lemma \ref{nonstrong-OI}. It is not difficult to check that $D$ has the desired paths if $|D^{\prime}|= 3$. Hence, $|D^{\prime}|=2$, which  implies that (i) holds, a contradiction with our assumption.

	Therefore we may assume  that $x_1,y_2,x_2,y_2$ all belong to the same strong component $D_j$. By Theorem \ref{jbjuzvw}, we may assume that $|\{x_1\}\cup\{y_1\}\cup\{x_2\}\cup\{y_2\}|<4$. By the assumption in the lemma $D_j$ has an $(x_i,y_i)$-path for all $i\in[2]$, so we can assume that $x_i\neq y_i$ for all $i\in[2]$. If $y_1=y_2$, then it follows from by Menger's theorem and the fact that $(D_j,x_1,x_2,y_2)$ is not of the type $A$ that the desired paths exist. So we may assume that $y_1\neq y_2$ and  by symmetry we have $x_1\neq x_2$. 

        The only remaining case is $x_i=y_{3-i}$ for some $i\in [2]$. Assume without loss of generality that
        $x_1=y_2$. If $D_j$ has a good $(x_1,x_1)$-pair, then $D_j$ clearly has the desired paths. So we may assume that $D_j$ has the structure given in Theorem \ref{sABC}. Let $e=e^{\prime}=ab$ and let $A_1,\ldots, A_l$ be the acyclic ordering of the strong components of $D\left\langle A\right\rangle$. Clearly, $\{x_1r:r\in A\cup C\}\cup \{ab\}\cup B_{b,B}^+$ is an out-branching $B^+_{x_1}$ with root $x_1$ in $D_j$, where $B_{b,B}^+$ exists as $b\in Out(B)$. If $D_j-A(B^+_{x_1})$ has an  $(x_2,x_1)$-path, then we are done so we may assume that $x_2$ can not reach any vertex of $\{x_1\}\cup B\cup C$ in $D_j-A(B^+_{x_1})$. This means that $x_2\in A_t$ for some $t$. Let $V_3$ be the set of vertices which $x_2$ can reach in $D_j-A(B^+_{x_1})$. Clearly, $x_2\in\bigcup_{i=t}^l A_i\subseteq V_3\subseteq A$ and there is only one arc $ab$ leaving $V_3$. By symmetry, one can construct an in-branching $B^-_{x_1}$ rooted at $x_1$ in $D_j$ and let $V_1$ be the set of vertices which can reach $y_1$ in $D_j-A(B^-_{x_1})$. Then $y_1\in V_1\subseteq B$ and there is only one arc $ab$ entering $V_1$. Set $V_2=V-V_1-V_3$. Then $(D_j,x_1,x_2,y_2)$ is of type $B$ with partition $V_1,V_2,V_3$ and $y_1\in V_1$, which contradicts  our assumption. 
	 This completes the proof. 
\end{proof}

\section{Extending arc-disjoint in- and out-trees in semicomplete digraphs}\label{sec:extend}
\begin{lem}\label{adjacentsomevex}
Let $D$ be a semicomplete digraph and let $H\subseteq D$ be a subdigraph. For any oriented tree $T$ in $D$, if all arcs in $H$ not in $A(T)$ are adjacent to some (fixed) vertex $h$ of $H$, then the digraph $H-h$ is either a single vertex or two vertices joined by one arc. In particular, $|V(H)|\leq 3$.
\end{lem}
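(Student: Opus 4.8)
The plan is a short double-counting argument on the subdigraph $H-h$. First I would reformulate the hypothesis: every arc of $H-h$ is an arc of $H$ that is not incident to (``adjacent to'') the vertex $h$, so by hypothesis any such arc must lie in $A(T)$; hence $A(H-h)\subseteq A(T)$. Since $T$ is an oriented tree, it contains no directed $2$-cycle and its underlying undirected graph is a tree, and both of these facts pass to $H-h$, since its arcs form a subset of $A(T)$: $H-h$ has no $2$-cycle, and its underlying graph is a forest (a subforest of the tree underlying $T$, together with possibly some isolated vertices). Writing $n=|V(H-h)|$, a forest on $n$ vertices has at most $n-1$ edges, and because $H-h$ has no $2$-cycle each such edge corresponds to exactly one arc; thus $H-h$ has at most $n-1$ arcs whenever $n\ge 1$.

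On the other hand $D$ is semicomplete, hence so is $H$, hence so is $H-h$; in particular between any two vertices of $H-h$ there is at least one arc, so $H-h$ has at least $\binom{n}{2}$ arcs. We may assume $|V(H)|\ge 2$ (otherwise $|V(H)|=1\le 3$ and there is nothing to prove), so $n\ge 1$, and then the two bounds give $\binom{n}{2}\le n-1$, that is $(n-1)(n-2)\le 0$, forcing $n\in\{1,2\}$.

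Finally I would read off the conclusion. If $n=1$, then $H-h$ is a single vertex. If $n=2$, then $H-h$ is a semicomplete digraph on two vertices with no $2$-cycle, so it is exactly two vertices joined by a single arc. In either case $|V(H)|=n+1\le 3$, as claimed.

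I do not foresee a genuine obstacle: the argument is elementary. The one point that must not be overlooked is that it really does use semicompleteness of $H$ — the acyclicity of $T$ only bounds the number of arcs of $H-h$ from above, while the matching lower bound $\binom{n}{2}$ comes solely from $H$ being semicomplete (equivalently, from $H$ being an induced subdigraph of the semicomplete digraph $D$). The degenerate case $V(H)=\{h\}$ is not literally covered by the phrasing ``a single vertex or two vertices joined by one arc'' but is harmless for the stated consequence $|V(H)|\le 3$ and is dealt with by the one-line remark above.
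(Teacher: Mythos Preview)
Your argument is correct and is essentially the paper's proof written out in full: the paper simply observes that $H-h$ is semicomplete with all its arcs contained in the tree $T$, and concludes immediately that $|V(H-h)|\le 2$; your double-count $\binom{n}{2}\le n-1$ is precisely the justification behind that one-line ``Hence''. Your remark that semicompleteness of $H$ (equivalently, that $H$ be induced) is genuinely needed is well taken---the paper's proof relies on the same unspoken assumption.
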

\begin{proof}
  The lemma follows by the fact that $H-h$ is semicomplete and all arcs in $H-h$ are used in $T$.
  Hence $H-h$ has at most two vertices and if it has such vertices $u,v$ then there is only one arc between these.
\end{proof}

Let $D$ be a semicomplete digraph and let $X,Y$ be two disjoint subsets of $V(D)$ such that all vertices in $X$ (resp., in $Y$) are covered by an out-tree $T^+_u$ rooted at $u$ (resp., an in-tree $T^-_v$ rooted at $v$) in $D$ and  assume that $T^+_u$ and $T^-_v$ are arc-disjoint. Let $X^{\prime}\subseteq X$ (resp., $Y^{\prime}\subseteq Y$) be the set of vertices covered by $T^-_v$ (resp., $T^+_u$) (possibly $X^{\prime}$ or $Y^{\prime}$ is empty).

We say that the pair $(T^+_u,T^-_v)$ is \textbf{extendable} (on $X\cup Y$) if $D$ has an out-tree $\hat{T}^+_u$  and an in-tree $\hat{T}^-_v$  which are arc-disjoint and such that 
 each of them covers all vertices in $X\cup Y$. \JBJ{Note that it is not required that all arcs of $T^+_u$ ($T^-_v$) are arcs of $\hat{T}^+_u$  ($\hat{T}^-_v$).} The following lemmas are used to describe non-extendable pairs $(T^+_u,T^-_v)$. \JBJ{Note that $X$ and $Y$ are always the same below.}

\begin{lem}\label{XYnoarc}
Suppose that $X$ dominates $Y$ and no  arc between $X$ and $Y$ is used in $T^+_u$ or $T^-_v$. If $(T^+_u,T^-_v)$ is  non-extendable, then $X^{\prime}=Y^{\prime}=\emptyset$, $|X\cup Y|\leq 3$ and $D\left\langle X\right\rangle$ (resp., $D\left\langle Y\right\rangle$) is a single vertex or an arc covered by $T^+_u$ (resp., by $T^-_v$). 
\end{lem}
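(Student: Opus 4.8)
The plan is to prove the contrapositive in stages: assuming $(T^+_u,T^-_v)$ is non-extendable, I derive the four assertions ($X'=\emptyset$; $Y'=\emptyset$; $|X\cup Y|\le 3$; and the stated structure of $D\left\langle X\right\rangle$ and $D\left\langle Y\right\rangle$) one after another, each time by showing that if the assertion in question fails then an extension exists. The workhorse is the following observation: since $X$ dominates $Y$ and no arc between $X$ and $Y$ lies in $T^+_u$ or $T^-_v$, every arc from $X$ to $Y$ is present in $D$ and is used by neither tree. Hence, given $x\in X$ and $y\in Y$, appending $xy$ as a new leaf to $T^+_u$ keeps it an out-tree rooted at $u$ whenever $y\notin V(T^+_u)$, i.e.\ $y\notin Y'$, and appending $xy$ as a new leaf to $T^-_v$ keeps it an in-tree rooted at $v$ whenever $x\notin V(T^-_v)$, i.e.\ $x\notin X'$; any collection of such added arcs with pairwise distinct heads (resp.\ tails) is internally consistent and is disjoint from $A(T^+_u)\cup A(T^-_v)$. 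Thus, whenever the sets of arcs added to the two trees are mutually disjoint, the enlarged trees $\hat T^+_u,\hat T^-_v$ are arc-disjoint, using that $T^+_u$ and $T^-_v$ already are.

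First I would show $X'=Y'=\emptyset$. If $Y'\neq\emptyset$, fix $x_0\in X$ and $y_0\in Y'$ and set $\hat T^+_u=T^+_u\cup\{x_0y:y\in Y\setminus Y'\}$ and $\hat T^-_v=T^-_v\cup\{xy_0:x\in X\setminus X'\}$; then $\hat T^+_u$ covers $V(T^+_u)\cup Y\supseteq X\cup Y$, $\hat T^-_v$ covers $V(T^-_v)\cup X\supseteq X\cup Y$, and the arcs added to $\hat T^+_u$ have heads in $Y\setminus Y'$ whereas those added to $\hat T^-_v$ have head $y_0\in Y'$, so by the previous paragraph the pair is arc-disjoint, contradicting non-extendability. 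Symmetrically, if $X'\neq\emptyset$ one fixes $x_0\in X'$ and an arbitrary $y_0\in Y$ and uses the same two trees, which are now arc-disjoint because the tails of the added arcs split between $X'$ and $X\setminus X'$. Hence $X'=Y'=\emptyset$. Next, still assuming non-extendability, I would show $|X|=1$ or $|Y|=1$: if $|X|,|Y|\ge 2$, pick distinct $a_1,a_2\in X$ and distinct $b_1,b_2\in Y$, let $\psi:X\to Y$ send $a_1\mapsto b_1$ and everything else to $b_2$, and $\phi:Y\to X$ send $b_2\mapsto a_1$ and everything else to $a_2$; one checks $\psi(\phi(y))\neq y$ for all $y\in Y$, so $\hat T^+_u=T^+_u\cup\{\phi(y)y:y\in Y\}$ and $\hat T^-_v=T^-_v\cup\{x\psi(x):x\in X\}$ — legitimate out/in-trees covering $X\cup Y$ precisely because $X'=Y'=\emptyset$ — are arc-disjoint, since a common arc would be an $xy$ with $\phi(y)=x$ and $\psi(x)=y$, a fixed point of $\psi\circ\phi$. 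This contradiction gives $|X|=1$ or $|Y|=1$.

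Since reversing all arcs of $D$ interchanges the roles of $X$ and $Y$, of $T^+_u$ and $T^-_v$, and of $u$ and $v$, while preserving both the hypotheses and the notion of extendability, it suffices to treat $|X|=1$, say $X=\{x_0\}$; then $D\left\langle X\right\rangle$ is a single vertex, as required. I claim every arc of $D\left\langle Y\right\rangle$ lies in $T^-_v$. If not, choose an arc $y'y_0$ of $D\left\langle Y\right\rangle$ with $y'y_0\notin A(T^-_v)$; then $\hat T^-_v=T^-_v\cup\{x_0y_0\}$ is an in-tree rooted at $v$ covering $X\cup Y$, and $\hat T^+_u=T^+_u\cup\{x_0y:y\in Y\setminus\{y_0\}\}\cup\{y'y_0\}$ is an out-tree rooted at $u$ covering $X\cup Y$ (reach each $y\in Y\setminus\{y_0\}$ directly from $x_0$, then reach $y_0$ from $y'$); they are arc-disjoint because the arcs $x_0y$ $(y\neq y_0)$ go from $X$ to $Y$ and differ from $x_0y_0$, the arc $y'y_0$ lies in $D\left\langle Y\right\rangle$ hence not in $T^+_u$ (which meets $Y$ in no vertex, as $Y'=\emptyset$), is excluded from $T^-_v$ by choice, and differs from $x_0y_0$ in its tail, and $A(T^+_u)$ is disjoint from $A(T^-_v)\cup\{x_0y_0\}$. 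This contradiction proves the claim, so the arcs of $D\left\langle Y\right\rangle$ form a subforest of $T^-_v$; therefore $D\left\langle Y\right\rangle$ has at most $|Y|-1$ arcs, while being semicomplete it has at least $\binom{|Y|}{2}$ arcs, forcing $|Y|\le 2$. If $|Y|=2$ the unique arc of $D\left\langle Y\right\rangle$ lies in $T^-_v$; if $|Y|=1$ then $D\left\langle Y\right\rangle$ is a single vertex. In all cases $D\left\langle Y\right\rangle$ is a single vertex or an arc covered by $T^-_v$, and $|X\cup Y|=1+|Y|\le 3$; combined with $X'=Y'=\emptyset$ and with the mirror statement obtained by running this argument in the reversed digraph when $|Y|=1$, all four assertions follow.

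I expect the main obstacle to be exactly this last case, $|X|=1$ (equivalently $|Y|=1$). There the naive extension of $T^+_u$ — adding one leaf $x_0y$ for each $y\in Y$ — necessarily collides with the single arc out of $x_0$ that any in-tree $\hat T^-_v$ covering $x_0$ must use, so one is forced instead to reach some vertex of $Y$ through an internal arc of $D\left\langle Y\right\rangle$; the feasibility of this is controlled precisely by whether $D\left\langle Y\right\rangle$ has an arc outside $T^-_v$, and it is the failure of that condition, combined with the elementary forest count against semicompleteness, that yields the exact structural conclusion. The remaining cases are routine once the leaf-appending principle of the first paragraph is in place; the only other point needing care is keeping track of which vertices already lie in which tree (the role of $X'$ and $Y'$) so that each appended arc really is a new leaf.
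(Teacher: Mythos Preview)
Your proof is correct and follows essentially the same approach as the paper: extend the two trees by attaching $X$-to-$Y$ arcs as new leaves, and when a conflict is forced, use an internal arc of $D\langle X\rangle$ or $D\langle Y\rangle$ not lying in the corresponding tree to reroute. The only cosmetic difference is ordering: the paper first shows that every arc of $D\langle X\rangle$ lies in $T^+_u$ (and symmetrically for $Y$), deduces $|X|,|Y|\le 2$, and then disposes of the $(2,2)$ case explicitly, whereas you first rule out $|X|,|Y|\ge 2$ by your fixed-point-free $\psi\circ\phi$ construction and only afterwards derive the internal-arc condition on the larger side.
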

\begin{proof}
Suppose that $X^{\prime}\neq \emptyset$ or there is an arc $ab$ in $D\left\langle X\right\rangle$ not used in $T^+_u$, let $x\in X^{\prime}\cup \{a\}$, then $(T^+_u,T^-_v)$ can be extended as follows: $T^+_u\cup \{xr:r\in Y-Y^{\prime}\}$, $T^-_v\cup \{ry:r\in X-X^{\prime}\}$ or $T^-_v\cup \{ab\}\cup \{ry:r\in X-a\}$, where $y$ is any vertex of $Y$. By our assumption we may assume that $X^{\prime}=\emptyset$ and all arcs in $D\left\langle X\right\rangle$ are used in $T^+_u$. By symmetry, we have $Y^{\prime}=\emptyset$ and all arcs in $D\left\langle Y\right\rangle$ are used in $T^-_v$. This means that $|X|\leq 2$ and $|Y|\leq 2$. If $|X|=|Y|=2$, say $X=\{x_1,x_2\},Y=\{y_1,y_2\}$, then $(T^+_u,T^-_v)$ can be extended by adding arcs $x_iy_i,i\in[2]$ to $T^+_u$ and arcs $x_iy_{3-i},i\in[2]$ to $T^-_v$. So the lemma holds by assumption.
\end{proof}

\begin{lem}\label{XYblue}
Let $a\in Y, b\in X$ be two vertices. Suppose that the arc between $a$ and $b$ belongs to $T^-_v$ and all other arcs between $X$ and $Y$ go from $X$ to $Y$ and none of these arcs are  used in $T^+_u$ and $T^-_v$. If $(T^+_u,T^-_v)$ is non-extendable, then $X^{\prime}=\{b\}$ and one of the following statements holds:

(i) $X=X^{\prime}$, $a\notin Y^{\prime}$ and all in-arcs of $a$ in $D\left\langle X\cup Y\right\rangle$ are used in $T^-_v$.

(ii) $|(X-b)\cup Y|\leq 3$ and $D\left\langle X-b\right\rangle$ (resp., $D\left\langle Y\right\rangle$) is a single vertex or an arc covered by $T^+_u$ (resp., by $T^-_v$). Moreover, all arcs in $D\left\langle X\right\rangle$ which not covered by $T^+_u$ are out-arcs of $b$.	 
\end{lem}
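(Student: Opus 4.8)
The plan is to assume that $(T^+_u,T^-_v)$ is non-extendable and to prove each claimed conclusion by contradiction: whenever a conclusion fails I exhibit arc-disjoint $\hat{T}^+_u,\hat{T}^-_v$, each covering $X\cup Y$, obtained from $T^+_u$ and $T^-_v$ by adding a handful of arcs. Write the arc between $a$ and $b$ as $ab$, so it leaves $a\in Y$ and enters $b\in X$. Then every pair with one end in $X$ and one in $Y$ other than $\{a,b\}$ is joined only by an arc from $X$ to $Y$, and all these arcs lie in neither tree; in particular $b$ dominates $Y\setminus\{a\}$, every vertex of $X\setminus\{b\}$ dominates all of $Y$, $T^+_u$ contains no arc between $X$ and $Y$, and $ab$ is the only arc between $X$ and $Y$ used by $T^-_v$. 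Moreover $b\in X'$ (since $ab\in T^-_v$), and no arc of $D\langle X\rangle$ lies in $T^-_v$, because $b$ is the only vertex of $X$ covered by $T^-_v$. In every extension below the only thing to verify is arc-disjointness, and by these remarks this always reduces to bookkeeping on which arcs between $X$ and $Y$ and which arcs of $D\langle X\rangle$ get used.

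I would first establish the two global facts $a\notin Y'$ and $X'=\{b\}$. If $a\in Y'$ then $b$ dominates all of $Y\setminus Y'$, so $\hat{T}^+_u=T^+_u\cup\{br:r\in Y\setminus Y'\}$ and $\hat{T}^-_v=T^-_v\cup\{ry_0:r\in X\setminus X'\}$, for any fixed $y_0\in Y$, are arc-disjoint and extend the pair; hence $a\notin Y'$. If some $c\in X'\setminus\{b\}$ exists, then, using that $c\notin X\setminus X'$, the out-tree $T^+_u\cup\{ca\}\cup\{br:r\in Y\setminus(Y'\cup\{a\})\}$ together with the in-tree $T^-_v\cup\{ry_0:r\in X\setminus X'\}$ extends the pair; hence $X'=\{b\}$.

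Now I split into two cases. If $X=\{b\}$ I claim (i) holds; since then $X=X'=\{b\}$ and $a\notin Y'$, only the last clause is at stake. If $a$ had an in-arc $wa$ in $D\langle X\cup Y\rangle$ with $wa\notin T^-_v$, then $w\in Y$ (as $ba$ is not an arc) and $wa\notin T^+_u$ (else $a\in Y'$); growing $T^+_u$ by first adding $bw$ when $w\notin Y'$, then $wa$, and then $br$ for the remaining $r\in Y\setminus Y'$, while keeping $\hat{T}^-_v=T^-_v$, would extend the pair --- a contradiction. So every in-arc of $a$ in $D\langle X\cup Y\rangle$ lies in $T^-_v$, which is (i). If instead $|X|\ge 2$ I aim at (ii). First, if $|Y|\ge 2$, pick $c\in X\setminus\{b\}$ and $y_0\in Y\setminus\{a\}$; then $\hat{T}^+_u=T^+_u\cup\{ca\}\cup\{br:r\in Y\setminus(Y'\cup\{a\})\}$ and $\hat{T}^-_v=T^-_v\cup\{ry_0:r\in X\setminus\{b\}\}$ are arc-disjoint (the only possible clash, $ca$ against $cy_0$, is excluded because $y_0\ne a$), so $Y=\{a\}$ and $D\langle Y\rangle$ is a single vertex. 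Next, if $D\langle X\rangle$ has an arc $pq$ with $p\ne b$ and $pq\notin T^+_u$ (so $pq$ lies in neither tree), then, using the free arc $pq$, I build an in-tree $\hat{T}^-_v$ covering $X\cup\{a\}$ in which $p$ is attached to $q$ via $pq$, $q$ is attached to $a$ via $qa$ when $q\ne b$, and every other vertex of $X\setminus\{b\}$ is attached to $a$ by its arc to $a$, thereby leaving the arc $pa$ unused by $\hat{T}^-_v$; then $\hat{T}^+_u=T^+_u\cup\{pa\}$ is arc-disjoint from $\hat{T}^-_v$, a contradiction. Hence every arc of $D\langle X\rangle$ not in $T^+_u$ is an out-arc of $b$, so all arcs of $D\langle X-b\rangle$ lie in $T^+_u$; as $D\langle X-b\rangle$ is semicomplete and the arcs of $T^+_u$ inside it form an out-forest, $D\langle X-b\rangle$ has at most $|X-b|-1$ arcs, forcing $|X-b|\le 2$. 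Therefore $D\langle X-b\rangle$ is a single vertex or a single arc covered by $T^+_u$, $|(X-b)\cup Y|\le 3$, and (ii) holds.

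The routine part is the repeated check that the proposed pairs are arc-disjoint, which always amounts to the bookkeeping of the first paragraph. The genuinely delicate step, and the one I would be most careful about, is the rerouting of $T^-_v$ in the case $|X|\ge 2$: one has to cover all of $X\setminus\{b\}$ by an in-tree while deliberately keeping the single arc $pa$ free, so that $a$ can still be attached to the out-tree. This is precisely the obstruction that (i) records as unavoidable when $X=\{b\}$, so the two conclusions correspond to whether or not this rerouting has enough room to succeed.
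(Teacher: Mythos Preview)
Your argument is essentially correct and follows the same strategy as the paper (assume non-extendability and build explicit extensions to force each conclusion), but there is one small slip and one genuine difference in organization worth noting.

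\textbf{The slip.} In the case $X=\{b\}$ you assert ``$ba$ is not an arc'' to conclude $w\in Y$. The hypotheses do not exclude a 2-cycle on $\{a,b\}$: the phrase ``the arc between $a$ and $b$ belongs to $T^-_v$'' pins down one arc (your $ab$), but if $ba$ also exists it is simply one of the ``other arcs between $X$ and $Y$'', hence lies in neither tree. In that situation $w=b$ is possible. The fix is immediate: when $w=b$ skip the step ``add $bw$'' and attach $a$ directly via $ba$, then add $br$ for the remaining $r\in Y\setminus Y'$. The paper's extension $T^+_u\cup\{a_Ia\}\cup\{br:r\in Y-Y'-a\}$ handles both $a_I\in Y$ and $a_I=b$ uniformly.

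\textbf{The organizational difference.} For the case $|X|\ge 2$ the paper does not build extensions from scratch; it simply applies Lemma~\ref{XYnoarc} to the pair $(X-b,\,Y)$ (the arc between $a$ and $b$ no longer crosses, so that lemma's hypotheses are met) and reads off the first sentence of (ii), then argues the ``out-arcs of $b$'' clause separately. Your direct construction with $c\in X\setminus\{b\}$ and $y_0\in Y\setminus\{a\}$ is perfectly valid and in fact yields the sharper conclusion $Y=\{a\}$ (so $|(X-b)\cup Y|\le 3$ with $|Y|=1$), whereas the paper's reduction only gives $|Y|\le 2$. Both approaches suffice for the application; yours is more self-contained, the paper's is shorter because it recycles Lemma~\ref{XYnoarc}.
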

\begin{proof}
As the arc between $a$ and $b$ is used in $T^-_v$, the vertex $b$ clearly belongs to $X^{\prime}$. If there exists an $x\in X^{\prime}-b$, then $(T^+_u,T^-_v)$ can be extended as follows: $T^+_u\cup \{xr:r\in Y-Y^{\prime}\}$ and $T^-_v\cup \{ra:r\in X-X^{\prime}\}$. It follows by the assumption that $x$ does not exist and then $X^{\prime}=\{b\}$.

First we consider the case $X=X^{\prime}=\{b\}$. Note that in this case, $T^-_v$ covers all vertices of $X\cup Y$. If $a\in Y^{\prime}$, that is, $a\in T^+_u$, or there is an in-arc $a_Ia$ of $a$ in $D\left\langle X\cup Y\right\rangle$ not used in $T^-_v$, then $T^+_u\cup \{br:r\in Y-Y^{\prime}\}$ or $T^+_u\cup \{a_Ia\}\cup\{br:r\in Y-Y^{\prime}-a\}$ extends $T^+_u$, a contradiction. So (i) follows by the assumption.

For the case that $X-b\neq \emptyset$, the first statement of (ii) follows from Lemma \ref{XYnoarc} when we consider  $X-b$ and $Y$. Suppose that there is an arc $wz$ with $w\neq b$ in $D\left\langle X\right\rangle$ which is not in $T^+_u$. Then $(T^+_u,T^-_v)$ can be extended in the following way: $T^+_u\cup \{wr:r\in Y-Y^{\prime}\}$ and $T^-_v\cup \{wz\}\cup\{ra:r\in X-b-w\}$, which contradicts our assumption. So all arcs in $D\left\langle X\right\rangle$ which not covered by $T^+_u$ are out-arcs of $b$ and then (ii) holds. 
\end{proof}

\begin{lem}\label{XYarc}
Let $ab$ be an arc from $Y$ to $X$. Suppose that all arcs between $X$ and $Y$ go from $X$ to $Y$ except for the arc $ab$. If $X^{\prime}=Y^{\prime}=\emptyset$ and $(T^+_u,T^-_v)$ is non-extendable, then one of the following statements holds: 

(i) $X=\{b,x\}, Y=\{a,y\}$, $A(D\left\langle X\right\rangle)=\{bx\}$ and
$A(D\left\langle Y\right\rangle)=\{ya\}$ and either $bx\in T^+_u$ or $ya\in T^-_v$;

(ii) $Y=\{a\}$ and either all out-arcs of $b$ in $D\left\langle X\right\rangle$ are used in $T^+_u$ or all arcs not covered by $T^+_u$ in $D\left\langle X\right\rangle$ are out-arcs of $b$.

(iii) $X=\{b\}$ and either all in-arcs of $a$ in $D\left\langle Y\right\rangle$ are used in $T^-_v$ or all arcs not covered by $T^-_v$ in $D\left\langle Y\right\rangle$ are in-arcs of $a$.
\end{lem}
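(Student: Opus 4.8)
Throughout I assume $X'=Y'=\emptyset$ and that $(T^+_u,T^-_v)$ is non-extendable, and I must deduce (i), (ii) or (iii). First I would record the consequences of $X'=Y'=\emptyset$: the out-tree $T^+_u$ contains no vertex of $Y$, hence no arc meeting $Y$, and symmetrically $T^-_v$ meets no vertex of $X$. Thus every arc between $X$ and $Y$ --- that is, $ab$ together with all the arcs directed from $X$ to $Y$ --- is used by neither tree, while inside $D\langle X\cup Y\rangle$ the only arcs that may already be in use are arcs of $D\langle X\rangle$ (in $T^+_u$) and arcs of $D\langle Y\rangle$ (in $T^-_v$). Also: every vertex of $X\setminus\{b\}$ dominates all of $Y$, every vertex of $X$ dominates all of $Y\setminus\{a\}$, and $ab$ is the unique arc from $Y$ into $b$.

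The plan is a case analysis on $|X|$ and $|Y|$, producing (ii) when $|Y|=1$, (iii) when $|X|=1$ (which follows from the $|Y|=1$ case by reversing all arcs, since reversal interchanges out-trees with in-trees, the roles of $X$ and $Y$, and the special arc $ab$ with an arc from the new out-side to the new in-side), and (i) when $|X|,|Y|\ge 2$. The recurring way to exhibit an extension is: pick $x_0\in X\setminus\{b\}$; let $\hat T^+_u$ be $T^+_u$ with the arcs $x_0y$ added for each $y\in Y$ it does not cover (legal, as $x_0$ dominates $Y$); and let $\hat T^-_v$ be $T^-_v$ with each still-uncovered $x\in X\setminus\{b,x_0\}$ sent to $a$, with $b$ sent to some vertex of $Y\setminus\{a\}$ (or, when $|Y|=1$, sent through an arc of $D\langle X\rangle$, or through $ba$ if that arc is present), and with $x_0$ given an out-arc inside $D\langle X\rangle$. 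All the added arcs are available; with a little care they are pairwise disjoint and create no directed cycle, so the pair is extendable --- provided $x_0$ has an out-arc of $D\langle X\rangle$ lying outside $T^+_u$ and $b$ has an available out-arc. When all of $x_0$'s out-arcs in $D\langle X\rangle$ lie in $T^+_u$ I would switch to the variant that reaches $b$ in $\hat T^+_u$ through the arc $ab$ after first absorbing $a$ into $\hat T^+_u$; this is legitimate exactly when $b$ is currently reached from inside $X$ in $T^+_u$, and it frees the corresponding within-$X$ arc for use in $\hat T^-_v$.

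For $|Y|=1$, $Y=\{a\}$: if the first alternative of (ii) fails then $b$ has an out-arc $bc$ of $D\langle X\rangle$ with $bc\notin A(T^+_u)$, and if the second fails then there is an arc $de\in A(D\langle X\rangle)\setminus A(T^+_u)$ with $d\ne b$; taking $x_0=d$ and running the construction (using $bc$ for $b$, $de$ for $d$, the arcs $xa$ for the remaining vertices of $X\setminus\{b\}$, and absorbing $a$ through $d$, or through $b$ if $ba\in A$) produces an extension. Hence non-extendability forces one of the two alternatives, i.e.\ (ii); the only nuisance is a degenerate coincidence among $b,c,d,e$ that would close a $2$-cycle in $\hat T^-_v$, which is dispatched by choosing the witnessing arcs minimally in the semicomplete digraph $D\langle X\rangle$. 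For $|X|,|Y|\ge 2$: if $|X\setminus\{b\}|\ge 2$ then two of its vertices are joined by an arc of $D\langle X\rangle$, and re-choosing $x_0$ to be the tail of a suitable such arc (passing to the $ab$-variant if the available orientation is awkward) yields an extension; hence $|X|=2$, and symmetrically $|Y|=2$, say $X=\{b,x\}$, $Y=\{a,y\}$. The construction extends the pair once $xb\in A(D)\setminus A(T^+_u)$, and its reversal once $ay\in A(D)\setminus A(T^-_v)$; a finite inspection of the out-trees and in-trees supported on $\{b,x,a,y\}$ eliminates every remaining sub-case except $A(D\langle X\rangle)=\{bx\}$, $A(D\langle Y\rangle)=\{ya\}$ together with $bx\in T^+_u$ or $ya\in T^-_v$ --- which is exactly (i). Lemma \ref{adjacentsomevex} is what keeps $|X\cup Y|\le 4$: in the exceptional regime $D\langle X\rangle$ apart from $b$, and $D\langle Y\rangle$ apart from $a$, are forced to lie in the trees.

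The main obstacle is precisely this bookkeeping in the last two cases. One has to assign a distinct available arc to each vertex newly absorbed into $\hat T^+_u$ and to each vertex newly absorbed into $\hat T^-_v$, keep the two assignments disjoint from each other, and avoid closing a directed cycle in either tree, all on a very tight budget of free arcs --- $ab$ is the only arc into $b$, and the only arcs out of $a$ usable by $\hat T^-_v$ point back into $X$. Proving that such an assignment can always be completed, except in the rigid configurations (i)--(iii), is where the real work lies.
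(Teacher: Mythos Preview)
Your strategy matches the paper's (case split on $|X|,|Y|$; exhibit explicit extensions), and your $|Y|=1$ construction is essentially the paper's. But what you have is a sketch, not a proof --- you say so yourself in the last paragraph --- and several deferred steps are where the work actually is. The reduction from $|X\setminus\{b\}|\ge 2$ is hand-waved (``re-choosing $x_0$\ldots passing to the $ab$-variant if the available orientation is awkward''); your $ab$-variant has an unstated precondition (you also need $x_0$ not to be a descendant of $b$ in $T^+_u$, else rerouting $b$ through $ab$ disconnects $x_0$ from $u$); the $2$-cycle in the $|Y|=1$ case is not ``dispatched by choosing the witnessing arcs minimally'' when $bd$ and $db$ happen to be the \emph{only} arcs of $D\langle X\rangle$ outside $T^+_u$; and the ``finite inspection'' for $|X|=|Y|=2$ is simply not carried out. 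Lemma~\ref{adjacentsomevex} is not what the paper uses to bound the sizes here.

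The paper sidesteps all this bookkeeping by proving two clean extensions first, neither of which needs any arc inside $D\langle X\rangle$. First, if $|X|\ge 3$ and $|Y|\ge 2$, pick distinct $x_1,x_2\in X\setminus\{b\}$ and $y\in Y\setminus\{a\}$ and take $\hat T^+_u=T^+_u\cup\{x_1y,x_2a\}\cup\{br:r\in Y\setminus\{a,y\}\}$ and $\hat T^-_v=T^-_v\cup\{x_1a\}\cup\{ry:r\in X\setminus\{x_1\}\}$; hence either $|X|\le 2$ or $|Y|=1$, and symmetrically. Second, if $|Y|\ge 2$ and some $wz\in A(D\langle X\rangle)\setminus A(T^+_u)$ has $w\ne b$, take $\hat T^+_u=T^+_u\cup\{wr:r\in Y\}$ and $\hat T^-_v=T^-_v\cup\{wz\}\cup\{ry:r\in X\setminus\{w\}\}$ for any $y\in Y\setminus\{a\}$; hence when $|Y|\ge 2$ every free arc of $D\langle X\rangle$ has tail $b$. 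With these two facts the $|X|=|Y|=2$ case becomes a few lines --- using crucially that $\hat T^+_u$ may \emph{discard} arcs of $T^+_u$: when $xb\in A(D)$ (so $xb\in T^+_u$ by the second fact) one replaces $T^+_u$ by $xab\cup xy$ --- and the $|Y|=1$ case is then exactly your construction.
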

\begin{proof}
First observe that no arc between $X$ and $Y$ is used in $T^+_u$ and $T^-_v$ as $X^{\prime}=Y^{\prime}=\emptyset$. Suppose that $|X|\geq 3$ and $|Y|\geq 2$, say $x_1,x_2,b\in X$ and $y,a\in Y$. Then $T^+_u\cup \{x_1y,x_2a\}\cup\{br:r\in Y-a-y\}$ extends $T^+_u$ and $T^-_v\cup \{x_1a\}\cup\{ry:r\in X-x_1\}$ extends $T^-_v$, a contradiction. So we may assume that either $|X|\leq 2$ or $|Y|= 1$. By symmetry we have that either $|Y|\leq 2$ or $|X|=1$.

Next we show that if $|Y|\geq 2$ and $D\left\langle X\right\rangle$ has an arc $wz$ with $w\neq b$ not in $T^+_u$, then $(T^+_u,T^-_v)$ is extendable. Let $y$ be a vertex in $Y-a$. Then $(T^+_u,T^-_v)$ can be extended as follows: $T^+_u\cup \{wr:r\in Y\}$ and $T^-_v\cup \{wz\}\cup\{ry:r\in X-w\}$. By assumption, we may assume that either $|Y|= 1$ or all arcs in $D\left\langle X\right\rangle$ which not in $T^+_u$ are out-arcs of $b$. By symmetry,  we have that either $|X|= 1$ or all arcs in $D\left\langle Y\right\rangle$ which not in $T^-_v$ are in-arcs of $a$.

Suppose that $|X|=|Y|=2$, say $X=\{x,b\}$ and $Y=\{y,a\}$. If $x$ dominates $b$, then $xb$ belongs to $T^+_u$ by the argument above and hence $xab\cup xy$ and $T^-_v\cup xby$ extend $T^+_u$ and $T^-_v$, respectively\footnote{Note that in this case the new out-tree does not use all arcs of the old out-tree $T^+_u$.}. So we assume that $D\left\langle X\right\rangle$ consists of arc $bx$. By symmetry, we have $D\left\langle X\right\rangle=ya$. Moreover, either $bx\in T^+_u$ or $ya\in T^-_v$, otherwise, $(bya,bxa)$ extends $(T^+_u,T^-_v)$. This implies that (i) holds.

Now it suffices to consider the case that $|X|=1$ or $|Y|=1$. Suppose that $|Y|=1$. If $D\left\langle X\right\rangle$ has an out-arc $bb_o$ of $b$ and an arc $wz$ with $w\neq b$ such that  none of the arcs $bb_0$ and $wz$ are used in $T^+_u$, then  $(T^+_u,T^-_v)$ can be extended in the following way: $T^+_u\cup \{wa\}$ and $T^-_v\cup \{wz,bb_o\}\cup\{ra:r\in X-b-w\}$, which contradicts our assumption. So (ii) holds and by symmetry (iii) holds if $|X|=1$. 
\end{proof}

%

\section{Arc-disjoint branchings and paths in semicomplete digraphs}\label{sec:B+P}
If $D$ has a good $(u,v)$-pair, then for every vertex $w$ of $D$ it is the case that $D$ has a $(w,v)$-path which is arc-disjoint from some out-branching $B^+_u$ rooted at $u$. It turns out that this partial problem also has a simple and very natural characterization for semicomplete digraphs.

\begin{thm}\label{Buvwiff}
Let $D=(V,A)$ be a semicomplete digraph and let $u,w,v$ be three vertices (not necessarily distinct) such that $D$ has an out-branching rooted at $u$ and a $(w,v)$-path. Then $D$ has an out-branching rooted at $u$ which is arc-disjoint from some $(w,v)$-path if and only if $D$ has a pair of arc-disjoint $(u,z)$- and $(w,v)$-paths for every vertex $z\in V$.
\end{thm}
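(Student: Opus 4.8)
The plan is to prove the non-trivial direction: assuming that for every $z\in V$ the digraph $D$ has arc-disjoint $(u,z)$- and $(w,v)$-paths, we must construct an out-branching $B^+_u$ arc-disjoint from some $(w,v)$-path. I would argue by contradiction and induction on $|V|$. Suppose no such pair exists. The first step is to apply Lemma \ref{OutbranPath} (in a suitably adapted form accounting for the root $w$ of the path rather than $u$): since $D$ has no out-branching $B^+_u$ arc-disjoint from a $(w,v)$-path, I expect to extract a partition $V_1,V_2$ of $V$ with $v\in V_1$, $u\in V_2$ and exactly one arc $e=xy$ leaving $V_2$. The point of this partition is that \emph{any} out-branching at $u$ and \emph{any} $(w,v)$-path both have to use $e$, so the obstruction is concentrated at a single arc; the task then reduces to understanding where $w$ sits relative to $V_1,V_2$.

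The core case analysis is on the location of $w$. If $w\in V_1$, then both branching and path already ``want'' the arc $e$, and I would look at $D\langle V_2\rangle$: here $u$ must reach $x$ (to deliver $e$ into its out-branching), and simultaneously we need a $(w',v)$-type routing; but $w\notin V_2$, so this case should actually be handled by showing the hypothesis (arc-disjoint $(u,z)$- and $(w,v)$-paths for all $z$, in particular $z=y$ or $z$ a vertex of $V_2$) is violated — giving one of the ``type'' obstructions of Definition \ref{def1} and contradicting, via Lemma \ref{type--nopaths}, the assumption. If instead $w\in V_2$, the natural move is to recurse: inside $D\langle V_2\rangle$ we seek an out-tree rooted at $u$ and a $(w,x)$-path that are arc-disjoint and together span $V_2$ (using the single exit arc $e=xy$ and Camion/Redei-type hamiltonian-path facts, Theorem \ref{thm:camion} and the hamiltonian-path lemma, to spread the branching over the remainder of $V_2$ and over $V_1$ from $y$). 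If that sub-pair exists we win by gluing $e$ and a spanning out-tree of $D\langle V_1\rangle$ rooted at $y$; if it does not, the induction hypothesis applied to $D\langle V_2\rangle$ with the triple $(u,w,x)$ yields some $z\in V_2$ with no arc-disjoint $(u,z)$- and $(w,x)$-paths, and I would lift this to a violation of the global hypothesis (arc-disjoint $(u,z)$- and $(w,v)$-paths) by extending the $(w,x)$-path along $e$ and then to $v$ inside $V_1$.

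The extension lemmas of Section \ref{sec:extend} (Lemmas \ref{XYnoarc}, \ref{XYblue}, \ref{XYarc}) are the tools I expect to need when the sub-instances collapse to very small digraphs ($|V_1|\le 3$ or $|V_2|\le 3$), where the induction cannot be applied cleanly and one must instead check directly that an out-tree and a path can be merged into spanning ones; these lemmas precisely characterize when such merging fails, and in each failure configuration I would exhibit the offending vertex $z$. I would also use Lemma \ref{nonstrong-OI} and the observation that $Out(D)$ induces a strong subdigraph (Lemma \ref{outinstrong}) to dispose of the non-strong case and to know that the relevant roots can indeed reach what they need to reach.

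The main obstacle, I expect, is the bookkeeping in the $w\in V_2$ recursive case: one must ensure that the ``bad'' vertex $z$ produced by the inductive hypothesis inside $D\langle V_2\rangle$ really does certify the failure of arc-disjoint $(u,z)$- and $(w,v)$-paths in the whole of $D$, and not merely inside $V_2$. This requires showing that \emph{every} $(w,v)$-path in $D$ decomposes as a $(w,x)$-path in $V_2$, the arc $e=xy$, and a $(y,v)$-path in $V_1$ — which follows from $e$ being the unique arc leaving $V_2$ — and then arguing that an arc-disjoint $(u,z)$-path for $z\in V_2$ would have to avoid $e$ as well (since it stays in $V_2$ if chosen minimally, or can be truncated), so that a hypothetical global arc-disjoint pair would restrict to an arc-disjoint $(u,z)$, $(w,x)$ pair in $D\langle V_2\rangle$, contradicting the choice of $z$. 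Matching this cleanly against the boundary cases handled by the Section \ref{sec:extend} lemmas, and against the ``type'' dichotomy of Theorem \ref{goodpair-paths}, is where the real work lies.
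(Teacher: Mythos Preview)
There is a genuine gap at your very first structural step. You claim that from the failure of an out-branching/path pair one can ``suitably adapt'' Lemma~\ref{OutbranPath} to produce a partition $V_1,V_2$ with $v\in V_1$, $u\in V_2$ and a single arc $e$ leaving $V_2$. But the proof of Lemma~\ref{OutbranPath} relies essentially on the branching and the path having the \emph{same} root $u$: it first deduces that $D$ has no two arc-disjoint out-branchings at $u$ (hence a $1$-cut by Edmonds), and then, if $v$ lies on $u$'s side, refines using that both an $(u,x)$-path and an $(u,v)$-path start at $u$. When $w\neq u$ neither deduction is available, and there is no reason a $1$-out-cut separating $u$ from $v$ should exist at all. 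In fact, in the paper's argument the cut that actually arises has $v$ on the \emph{same} side as $u$ and $w$, so the picture on which you base your entire case analysis (and the subsequent recursion into $D\langle V_2\rangle$) is the wrong one.

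The missing idea, which replaces your adaptation of Lemma~\ref{OutbranPath}, is an auxiliary-vertex construction: add a new vertex $s$ with arcs $su$ and $sw$, and ask whether the enlarged digraph $D'$ has two arc-disjoint out-branchings rooted at $s$. If yes, one of them (the one using $su$) restricts to an out-branching $B_u^+$ of $D$ and the other contains a $(w,v)$-path arc-disjoint from it, so we are done. If not, Edmonds' theorem gives a set $V_1\subseteq V$ with $d^-_{D'}(V_1)\le 1$; this forces at most one of $u,w$ into $V_1$, and a short direct construction---invoking the hypothesis just once, with $z$ equal to the head of the unique arc of $D$ entering $V_1$---produces the desired pair. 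No induction is needed, and none of the Section~\ref{sec:extend} extension lemmas play any role in this theorem.
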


\begin{proof}
The necessity is trivial. To see the sufficiency, observe that for the case $u=w$, by Lemma \ref{OutbranPath}, we may assume that there exists a partition $V_1,V_2$ of $V$ such that $v\in V_1,u\in V_2$ and $d_D^+(V_2)=1$. Then for any vertex $z\in V_1$, $D$ has  no pair of arc-disjoint $(u,z)$- and  $(w,v)$-paths,  contradicting our assumption. So it suffices to consider the case $u\neq w$.

Let $D^{\prime}$ be an auxiliary digraph obtained from $D$ by adding a new vertex $s$ together with arcs $su,sw$. If there are two arc-disjoint out-branchings with root $s$ in $D^{\prime}$, then it is clear that $D$ has the desired branching and path. Hence by Theorem \ref{edmonds1973}, we may assume that there is a subset $V_1\subseteq V(D^{\prime}) - \{s\}$ such that $d_{D^{\prime}}^-(V_1) \leq 1$. Let $V_2=V-V_1$. Clearly, $s\in V_2$. By the construction and the fact that  $d_{D^{\prime}}^-(V_1) \leq1$, we have either $u\notin V_1$ or $w\notin V_1$. Furthermore, since $D$ has an out-branching rooted at $u$, we have either $u\in V_1,w\in V_2$ or $u,w\in V_2$. 

If  $u\in V_1,w\in V_2$, then the arc $su$ is the only arc entering $V_1$ in  $D^{\prime}$ and hence $V_1$ has in-degree zero in $D$. Since there is a $(w,v)$-path in $D$, the vertex $v$ belongs to $V_2$ and $D\left\langle{}V_2\right\rangle$ contains a
$(w,v)$-path $P$. Now the desired pair can be obtained by taking $P$ and an out-branching in $D$ consisting of an out-branching with root $u$ in $D^{\prime}\left\langle V_1\right\rangle$ and  all arcs $\{ur:r\in V_2-s\}$. So we may assume that both $u$ and $w$ belong to $V_2$ and moreover, $d_{D^{\prime}}^-(V_1) =d_{D}^-(V_1)=1$ as $D$ has an out-branching rooted at $u$. Let $xy$ be the arc entering $D\left\langle V_1\right\rangle$. By the assumption, there is a pair of arc-disjoint $(u,y)$- and $(w,v)$-paths $P_{u,y}$ and $P_{w,v}$
in $D$. Thus we must have $v\in V_2$. Now we construct an out-branching $B_u^+$ in $D$ which is arc-disjoint with $P_{w,v}$ as follows: $B_{u}^+=B_{y,D\left\langle V_1\right\rangle}^+\cup P_{u,y}\cup\{yr:r\in V_2-\{s\}-V(P_{u,y})\}$, where $B_{y,D\left\langle V_1\right\rangle}^+$ exists as $D$ has an out-branching rooted at $u$. This completes the proof.	
\end{proof}
\newpage

\begin{thm}\label{outbranpath}
Let $D$ be a semicomplete digraph and let $u,w,v$ be three vertices (not necessarily distinct) such that $D$ contains an out-branching rooted at $u$ and a $(w,v)$-path. Then $D$ has an out-branching with root $u$ which is arc-disjoint from some $(w,v)$-path unless one of the following statements holds.

(i) $Out(D)=\{u\}=\{w\}$ and \JBJ{$u$ is the only in-neighbour of }$v$ in $D$. In particular, $(D,u,w,v)$ is of type $A$ with $V_1=\{v\}$ in Definition \ref{def1}.

(ii) The vertices $u,w,v$ belong to the same component of $D$, i.e., $D\left\langle Out(D)\right\rangle$, and $(D\left\langle Out(D)\right\rangle,u,w,v)$ is one of the types in Definition \ref{def1}.
\end{thm}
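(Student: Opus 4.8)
The plan is to reduce this to Theorem \ref{Buvwiff} together with the classification in Theorem \ref{goodpair-paths}. Theorem \ref{Buvwiff} tells us that $D$ has an out-branching rooted at $u$ arc-disjoint from some $(w,v)$-path \emph{if and only if} $D$ has a pair of arc-disjoint $(u,z)$- and $(w,v)$-paths for every $z\in V$; so I only need to analyze when this latter condition fails. For the ``if'' direction of the theorem statement (the exceptional cases genuinely have no solution) I will note that (i) is the type-$A$ obstruction with $V_1=\{v\}$, which by Lemma \ref{type--nopaths} blocks arc-disjoint $(u,w)$- and $(w,v)$-paths for $z=v$, and that (ii) means $(D\langle Out(D)\rangle,u,w,v)$ is of some type in Definition \ref{def1}, so again Lemma \ref{type--nopaths} gives a bad choice $z\in V_1$; in both cases Theorem \ref{Buvwiff} then denies the branching–path pair. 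Note here one must check $v$ (and, where relevant, $z$) actually lies where it should: since $D$ has a $(w,v)$-path and a $(u,z)$-path is required for \emph{all} $z$, these vertices are reachable appropriately, which is consistent with $w,v\in Out(D)$ in case (ii).

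For the ``only if'' direction, suppose $D$ has no out-branching with root $u$ arc-disjoint from a $(w,v)$-path. By Theorem \ref{Buvwiff} there is some $z\in V$ with no pair of arc-disjoint $(u,z)$- and $(w,v)$-paths. First I would dispose of the case where $D$ has an out-branching rooted at $u$ but $u$, $w$, $v$ do \emph{not} all lie in $D\langle Out(D)\rangle$: since $u\in Out(D)$ always (as $D$ has an out-branching at $u$), and a $(w,v)$-path exists, I want to show the failure of arc-disjoint $(u,z)$-, $(w,v)$-paths forces the type-$A$ degenerate situation of (i). Here I apply Theorem \ref{goodpair-paths} with $(x_1,y_1)=(u,z)$, $(x_2,y_2)=(w,v)$: the pair fails, so either its alternative (i) holds — $D$ non-strong, $u=z=w=v$ with $\{u\},\{v\}$ consecutive strong components — which is exactly case (i) of our theorem (type $A$, $V_1=\{v\}$, and $Out(D)=\{u\}$ since the initial strong component is a single vertex); or its alternative (ii) holds — all four vertices in one strong component $D_j$ with a Definition \ref{def1} structure. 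In the latter case $D_j\subseteq Out(D)$ forces $D_j=D\langle Out(D)\rangle$ (using Lemma \ref{outinstrong} and that $Out(D)$ induces a strong digraph containing $u$), giving case (ii). The remaining possibility, where $u,w,v$ all lie in $D\langle Out(D)\rangle$ to begin with, is handled the same way by applying Theorem \ref{goodpair-paths} inside $D\langle Out(D)\rangle$; but one must be careful that a $(u,z)$-path can be routed through $D\langle Out(D)\rangle$ when $z$ itself lies outside $Out(D)$ — then $z$ is reachable from every vertex of $Out(D)$, so arc-disjoint paths within $Out(D)$ can be extended to $z$, meaning the obstruction must already be internal to $D\langle Out(D)\rangle$, reducing to Definition \ref{def1} types there.

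The main obstacle I anticipate is the bookkeeping around vertices lying outside $Out(D)$: a priori the ``bad'' $z$ from Theorem \ref{Buvwiff} can be anywhere, and I must argue that its being outside $Out(D)$ cannot by itself create an obstruction — intuitively because everything in $Out(D)$ reaches it, so one can first solve the arc-disjoint path problem for a target inside $Out(D)$ and then append a short path to $z$ avoiding the already-used arcs. Making this appending step rigorous (choosing which out-tree to extend, ensuring arc-disjointness is preserved, as in the proof of Lemma \ref{OutbranPath}) is the delicate part; the extension lemmas of Section \ref{sec:extend}, particularly Lemma \ref{XYnoarc}, are designed exactly for this and should let me push the obstruction into $D\langle Out(D)\rangle$, after which Theorem \ref{goodpair-paths} finishes the classification. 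A secondary, more routine check is verifying the ``In particular'' clause of (i): that the non-strong consecutive-singleton-components configuration is literally the type-$A$ partition with $V_1=\{v\}$, which is immediate from Definition \ref{def1}.
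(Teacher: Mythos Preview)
Your approach is the same as the paper's---reduce via Theorem~\ref{Buvwiff} to the arc-disjoint path problem and then invoke Theorem~\ref{goodpair-paths}---and that is exactly right. However, you have misidentified where the work lies.

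Your ``main obstacle'' is a non-issue. Theorem~\ref{goodpair-paths} is stated for the whole digraph $D$, not for $D\langle Out(D)\rangle$, and its hypotheses are met for every $z$ (since $u\in Out(D)$ gives a $(u,z)$-path and a $(w,v)$-path is assumed). So if the pair fails for some $z$, alternative (i) or (ii) of Theorem~\ref{goodpair-paths} holds directly in $D$. Alternative (ii) forces all of $u,z,w,v$ into the same strong component $D_j$, and since $u\in Out(D)$ this component \emph{is} $D\langle Out(D)\rangle$; there is nothing to ``push inside'' and no need for the Section~\ref{sec:extend} extension lemmas. If instead $z\notin Out(D)$, then alternative~(ii) is impossible (different components), so alternative~(i) fires, giving $u=w$, $z=v$, and $\{u\},\{v\}$ consecutive singleton strong components---which is exactly statement~(i) of Theorem~\ref{outbranpath}.

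The step you actually skip is inside alternative~(ii) of Theorem~\ref{goodpair-paths}. That alternative says that \emph{for some} $i\in\{1,2\}$ the triple $(D_j,x_i,x_{3-i},y_{3-i})$ is of one of the types in Definition~\ref{def1} with $y_i\in V_1$. With $(x_1,y_1,x_2,y_2)=(u,z,w,v)$ the case $i=1$ gives $(D_j,u,w,v)$ typed, which is statement~(ii) of Theorem~\ref{outbranpath} directly. But the case $i=2$ gives $(D_j,w,u,z)$ typed with $v\in V_1$, and this does \emph{not} immediately say $(D_j,u,w,v)$ is typed. The paper closes this by observing that in every such type the set $V_1$ has exactly one incoming arc from $V\setminus V_1$, and $u,w$ both lie in $V\setminus V_1$ while $v\in V_1$; hence $(D_j,u,w,v)$ is of type~$A$ with the partition $V_1,\,V\setminus V_1$. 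You should add this one-line check; without it your reduction to case~(ii) of the theorem is incomplete.
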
 

\begin{proof} 
	Observe that $u\in Out(D)$ as there is an out-branching rooted at $u$ in $D$. Then the theorem follows by Theorem \ref{Buvwiff} and  Theorem \ref{goodpair-paths}. It should be noted that if $(D\left\langle Out(D)\right\rangle,w,u,z)$ is one of the types in Definition \ref{def1} and $v\in V_1$, then $(D\left\langle Out(D)\right\rangle,u,w,v)$ is of type $A$.
%
\end{proof}

\section{Arc-disjoint in- and out-branchings in semicomplete digraphs}\label{sec:proof}

\noindent{}Now we are ready to prove Theorem \ref{thm:SDbranchchar}.
We first state the following result that will be used in the proof. An arc $xy$ of a strong digraph $D$ is a {\bf cut-arc} if $D\setminus xy$ is not strong.

\begin{thm}\label{thmOI}
Let $D$ be a strong semicomplete digraph of order at least 4 with a cut-arc $xy$ and let $u \in Out(D-xy),v \in In(D-xy)$ be two vertices. Suppose that $D$ contains no good $(u,v)$-pair. Then either $D$ is isomorphic to one of the digraphs shown in Figure \ref{fig4} (d)-(f) or $D-xy$ has exactly two strong components and one of the following statements holds. 

(i) $In(D-xy)=\{v\}=\{x\}$ and $d_{D}^+(y)=1$. Say $N_D^{+}(y)=\{z\}$. There is no $(u,z)$-path in $D-yz$.

(ii) $Out(D-xy)=\{u\}=\{y\}$ and $d_{D}^-(x)=1$. Say $N_D^{-}(x)=\{z\}$. There is no $(z,v)$-path in $D-zx$.
\end{thm}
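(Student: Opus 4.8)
The plan is to exploit the strong component structure of $D-xy$. Since $xy$ is a cut-arc, $D-xy$ is not strong; let $D_1,\ldots{},D_p$ with $p\ge 2$ be the unique acyclic ordering of its strong components. Because $D$ is strong, there is a path in $D$ from $V(D_p)$ to $V(D_1)$, and this path must use $xy$ (the only arc of $D$ not present in $D-xy$); decomposing it along $xy$ forces $x\in V(D_p)$ and $y\in V(D_1)$. Hence $Out(D-xy)=V(D_1)$ and $In(D-xy)=V(D_p)$, so $u,y\in V(D_1)$ and $v,x\in V(D_p)$, and in $D$ every arc between distinct blocks respects the ordering $D_1<\cdots<D_p$ except the single backward arc $xy$.

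With this skeleton fixed, I would try to build a good $(u,v)$-pair directly, using the extension machinery of Section \ref{sec:extend}. Start from a small pair of arc-disjoint trees --- for instance an out-branching $T^+_u$ of $D\left\langle V(D_1)\right\rangle$ rooted at $u$ together with an in-tree $T^-_v$ into $v$ covering $V(D_1)\cup\{v\}$, which are easily arranged to be arc-disjoint --- and then absorb the remaining blocks $D_2,\ldots{},D_p$ one (group) at a time. Each absorption step is an instance of Lemma \ref{XYnoarc}, Lemma \ref{XYblue} or Lemma \ref{XYarc} (with $X$, $Y$ taken to be the already-covered part and the next block(s), and using that $xy$ is the only available backward arc): either the pair extends, and iterating yields a genuine good $(u,v)$-pair, contradicting the hypothesis, or we read off a rigid obstruction --- a block of size $1$ or $2$ with all the relevant arcs forced into $T^+_u$ or $T^-_v$.

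When no extension is possible, I would assemble these local obstructions into a global picture. Using repeatedly that $xy$ is the only backward arc, the constraints should collapse the decomposition to $p=2$, and then a final case split on which of $V(D_1)$, $V(D_p)$ carries the bottleneck yields the two symmetric cases (i) and (ii); in each of them the ``no $(u,z)$-path in $D-yz$'' (resp. ``no $(z,v)$-path in $D-zx$'') statement is exactly a type $A$ obstruction, so it can be justified through Theorem \ref{outbranpath} and Theorem \ref{goodpair-paths} applied inside the relevant block. The finitely many residual digraphs that survive all the extension attempts without falling into (i) or (ii) have bounded order and are checked by hand to be precisely the three digraphs of Figure \ref{fig4}(d)--(f) (the one in (e) and (f) being isomorphic).

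I expect the hard part to be the last stage: faithfully tracking the numerous ways the block-by-block extension can get stuck, eliminating the possibility $p\ge 3$ in all of them, and then verifying that the sporadic leftovers are exactly (d)--(f) and that the one-arc bottleneck cases really do coincide with the crisp conditions (i) and (ii) --- in particular matching the extremal data (degrees of $x$ and $y$, the vertex $z$, the missing path) on the nose. A secondary point of care is the choice of initial arc-disjoint pair $(T^+_u,T^-_v)$ --- possibly one needs to run the argument from the dual side ($D_p$ first) as well --- so that the three extension lemmas of Section \ref{sec:extend} are applicable at every step.
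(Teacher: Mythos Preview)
Your opening paragraph is right and matches the paper: $x\in V(D_p)$, $y\in V(D_1)$, $u\in V(D_1)$, $v\in V(D_p)$, and the unique backward arc is $xy$. But from there you overshoot: the paper does \emph{not} iterate block by block, and it does not invoke Theorem~\ref{outbranpath} or Theorem~\ref{goodpair-paths} at all.

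The paper makes a single cut: set $Y=In(D-xy)=V(D_p)$ and $X=V(D)\setminus Y$. Take any out-branching $T^+_u$ of $D\langle X\rangle$ (possible since $u\in V(D_1)$) and any in-branching $T^-_v$ of $D\langle Y\rangle$. These trees sit on disjoint vertex sets, so $X'=Y'=\emptyset$, and the only arc from $Y$ to $X$ is $xy$. One application of Lemma~\ref{XYarc} finishes the structural part: case~(i) there requires $D\langle Y\rangle$ to be a single arc, impossible because $Y$ is a strong component; so up to the evident symmetry between the theorem's conclusions (i) and (ii) we are in case~(ii) of Lemma~\ref{XYarc}, which gives $Y=\{x\}=\{v\}$ at once. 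No absorption of intermediate blocks, no collapsing $p\geq 3$ by hand.

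After that the proof branches on $d^+_D(y)$. If $d^+_D(y)\geq 2$, choose $T^+_u$ to be a hamiltonian path of $D\langle X\rangle$ starting at $u$; then some out-arc of $y$ is unused, so the second alternative of Lemma~\ref{XYarc}(ii) says every arc of $D\langle X\rangle$ missed by $T^+_u$ is incident with $y$, and Lemma~\ref{adjacentsomevex} forces $|X|=3$. A short direct check on these four-vertex digraphs yields exactly Figure~\ref{fig4}(d)--(f). If $d^+_D(y)=1$ with $N^+_D(y)=\{z\}$, then $yz$ is itself a cut-arc; in $D-yz$ one finds $V_l=\{y\}$, $V_{l-1}=\{x\}$, and $z\in V_1$ (since removing $yz$ does not touch the out-arcs of $z$). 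A tiny hand construction shows a good $(u,v)$-pair exists whenever $u\in V_1$, so by hypothesis $u\notin V_1$, which is precisely the ``no $(u,z)$-path in $D-yz$'' clause of~(i). The statement ``$D-xy$ has exactly two strong components'' drops out automatically here, because $y\in V(D_1)$ dominates every vertex outside $V(D_1)$ and has out-degree~$1$.

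So the ``hard part'' you flagged --- bookkeeping many iterative failures, forcing $p=2$, and matching the residual data to (i)/(ii) --- never materialises. Your plan is not wrong in spirit, but the single $X=V\setminus In(D-xy)$, $Y=In(D-xy)$ split plus one shot of Lemma~\ref{XYarc} is the idea you are missing.
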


\noindent{} Note that if (i) holds, then  we have $z\in V_1$, $V_{l-1}=\{x\}=\{v\},V_l=\{y\}$ and $u\notin V_1\cup V_{l-1}$ (possibly $u=y$) where $D_1,\ldots,D_l~(l\geq 3)$ is acyclic ordering of the strong components of $D-yz$ and $V_i=V(D_i)$.\\

\begin{proof}  Let $Y=In(D-xy)$ and $X=V(D)-Y$. As $u$ belongs to $Out(D-xy)=Out(D\left\langle X\right\rangle)$ and $v\in Y= In(D-xy)$, $D$ has an  out-branching rooted at $u$ in $D\left\langle X\right\rangle$ and an in-branching rooted at $v$ in $D\left\langle Y\right\rangle$. Let $T^+_u$ and $T^-_v$ be any such out- and in-branchings, respectively. As $D\left\langle Y\right\rangle$ is strong, by symmetry, we may assume that the second statement of Lemma \ref{XYarc} holds. That is, $Y=\{x\}=\{v\}$ and either all out-arcs of $y$ in $D\left\langle X\right\rangle$ are used in $T^+_u$ or all arcs of  $D\left\langle X\right\rangle$ which are not used by $T^+_u$  are adjacent to $y$.

Suppose that $d_D^+(y)\geq 2$. Let $T^+_u$ be a hamitonian path starting at $u$ in $D\left\langle X\right\rangle$. Then there is an out-arc of $y$ not used in $T^+_u$ and hence, by the remark above, all arcs not covered by $T^+_u$ in $D\left\langle X\right\rangle$ are adjacent to $y$. It follows by Lemma \ref{adjacentsomevex} and the fact $|V(D)|\geq 4$ that $|X|=3$. Suppose that $T^+_u=u_1u_2u_3$ with $u_1=u$ is the hamiltonian path in $D\left\langle X\right\rangle$. Then $y\in\{u_1,u_3\}$. First consider the case $y=u_1=u$. If $u_1u_3\notin A(D)$, then $y$ dominates $x$ as $d_D^+(y)\geq 2$. Then $T^+_u\cup \{u_3v\}$ and $u_3yv\cup \{u_2v\}$ is a good $(u,v)$-pair, which contradicts our assumption. So $u_1u_3\in A(D)$. If $u_2$ dominates $u_1$, then $T^+_u\cup \{u_2v\}$ and $u_2u_1u_3v$ form a good $(u,v)$-pair, a contradiction again. Then $D$ is isomorphic to the digraph shown in Figure \ref{fig4} (d) if $u_3u_1\notin A(D)$ (resp., in Figure \ref{fig4} (f) if $u_3u_1\in A(D)$.  For the case that $y=u_3$, the vertex $u_3$ dominates $u_1$ as $d_D^+(y)\geq 2$ and then $u_1u_2vu_3$ and $u_2u_3u_1v$ form a good $(u,v)$-pair,  a contradiction again.

\JBJ{It remains }to consider the case $d_D^+(y)=1$. Say $N_D^+(y)=\{z\}$. This means that $yz$ is a cut-arc of $D$. Let $V_1,\ldots,V_l$ be the acyclic ordering of the strong components of $D-yz$. It follows by $d_D^+(y)=1$ that $|V_l|=|\{y\}|=1$. Since there \JBJ{is} only one $(x,y)$-path in $D$ (recall that $xy$ is a cut-arc), we have  $V_{l-1}=\{x\}=\{v\}$. As $u \in Out(D-xy)$ and $v \in In(D-xy)$, we have $u\neq v$ and then $u\notin V_{l-1}$.

Note that if $u\notin V_1$, then  the statement (i) holds so we may assume that $u\in V_1$. Next we claim that there is a good $(u,v)$-pair, which contradicts our assumption. Recall that $X=V_{1}\cup\cdots\cup V_{l-2}\cup \{y\}$. As $|V(D)|\geq 4$ we have that $|V_{1}\cup\cdots\cup V_{l-2}|\geq 2$. Moreover, there is a spanning out-tree (that is, an out-branching) $T^+_u$ in $D\left\langle X\right\rangle$ such that $yz$ does not belong to $T^+_u$. By the argument in the first paragraph of the proof, all arcs not covered by $T^+_u$ in $D\left\langle X\right\rangle$ are adjacent to $y$. Thus all arcs in $D\left\langle V_{1}\cup\cdots\cup V_{l-2}\right\rangle$ are used in $T^+_u$, which means that $|V_{1}\cup\cdots\cup V_{l-2}|\leq 2$. It follows by $|V(D)|\geq 4$ that equality holds, say $D\left\langle V_{1}\cup\cdots\cup V_{l-2}\right\rangle=uw$, then $uwvy$ and $wyuv$ form the desired pair, which completes the proof.	
\end{proof}

\begin{lem}\label{counterexample}
Let $D$ be a semicomplete digraph and $u,v$ two distinct vertices. If $D$ is isomorphic to one of digraphs shown in Figure \ref{fig4}, then there is no good $(u,v)$-pair in $D$.
\end{lem}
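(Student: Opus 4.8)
The plan is to verify directly, for each of the six digraphs (a)--(f) in Figure~\ref{fig4} and each ordered pair $(u,v)$ of distinct vertices, that no out-branching $B^+_u$ can be arc-disjoint from an in-branching $B^-_v$. The number of arcs is tiny in every case, so this is a finite check, but it should be organized to avoid brute force over all branchings.

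First I would dispose of the trivial sub-cases coming from connectivity. In digraph (a) the only choice with $u\in Out(D)$ and $v\in In(D)$ is $u$ above $v$; but $B^+_u$ needs the single arc $uv$ to reach $v$ and $B^-_v$ also needs $uv$ to let $u$ reach $v$, so they cannot be arc-disjoint. For (b)--(f) one first notes which vertices lie in $Out(D)$ and which in $In(D)$: if $u\notin Out(D)$ then there is no $B^+_u$ at all, and similarly for $v\notin In(D)$, so those pairs are vacuously fine. This trims the list of $(u,v)$ to check to the handful of genuinely relevant pairs, which in (b)--(f) always forces $u$ to be the labelled top vertex (the unique source in $Out(D)$) and $v$ to be the labelled vertex $v$ in $In(D)$, or in (c) allows one more swap.

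For each remaining pair the key step is a counting argument at a cut. In each of (b)--(f) there is a vertex set $Z$ (a single vertex or a pair) with $d^-_D(Z)=1$ and $d^+_D(Z)=1$, and with $u$ on one side of the relevant cut and $v$ on the other: an out-branching rooted at $u$ must use the unique arc entering $Z$ to reach the vertices of $Z$, while an in-branching rooted at $v$ must use that \emph{same} unique arc to let the vertices of $Z$ reach $v$ (or symmetrically with the unique arc leaving $Z$). Concretely one identifies, using the figure, the pendant-like structures: e.g. in (b), $w$ has in-degree $1$ and $v$ has in-degree $1$ with the two in-arcs forced into $B^-_v$ and into $B^+_u$ respectively in a conflicting way; in (d)--(f), the arc $va$ (i.e. $vu$) is the unique arc into the initial component containing $u$ and simultaneously the unique way back, so both branchings are forced to claim it. This is exactly the mechanism already isolated in Lemmas~\ref{type--nopaths}, \ref{XYnoarc}--\ref{XYarc} and in Theorem~\ref{goodpair-paths}, so I would phrase the obstruction as ``$(D,u,?,v)$ is of one of the types in Definition~\ref{def1}'' wherever that is cleanest, and otherwise give the one-line arc-count.

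The main obstacle is simply bookkeeping: making sure that for each of the six digraphs we have correctly read off the arc set from the TikZ picture (especially the bent arcs and the claim that (e) and (f) are isomorphic, which lets us skip (e) once (f) is done), and that we have enumerated all pairs $(u,v)$ with $u\in Out(D)$, $v\in In(D)$ rather than only the ``obvious'' one. I expect no real difficulty beyond this; the proof will be a short case analysis, with each case closed by exhibiting the single arc that both branchings are forced to use, or by citing Lemma~\ref{type--nopaths} after checking the relevant type. I would present it as: ``By the remarks above it suffices to consider $u\in Out(D)$ and $v\in In(D)$; we treat each digraph of Figure~\ref{fig4} in turn,'' followed by six short paragraphs.
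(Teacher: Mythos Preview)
Your overall plan (dispose of each small digraph by a short direct argument) is exactly what the paper does, but two of your concrete claims are wrong and would derail the write-up.

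First, $u$ and $v$ are not ranging over all ordered pairs of vertices: in the statement ``isomorphic to one of the digraphs in Figure~\ref{fig4}'' the vertices $u,v$ are the \emph{labelled} ones, so there is nothing to enumerate. Relatedly, your trimming via $Out(D)$, $In(D)$ does not work as advertised: the digraphs (c)--(f) are all strong, so $Out(D)=In(D)=V(D)$ and nothing gets trimmed. This also means that the fallback you propose --- showing that $(D,u,w,v)$ is of one of the types in Definition~\ref{def1} and invoking Lemma~\ref{type--nopaths} --- cannot succeed for (c)--(f): these digraphs are precisely the exceptions in Theorem~\ref{thm:SDbranchchar}, so they \emph{do} satisfy the arc-disjoint path condition (i) for every $z,w$.

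Second, your cut mechanism for (d)--(f) is inverted. The arc $vu$ is not forced into both branchings; on the contrary it lies in \emph{neither}, since $u$ has in-degree~$0$ in any $B^+_u$ and $v$ has out-degree~$0$ in any $B^-_v$. That is exactly the leverage the paper uses: a good $(u,v)$-pair needs $2(|V(D)|-1)$ arcs, none of which can be $vu$; for (a)--(d) the number of available arcs other than $vu$ is already below this bound, so those four cases fall immediately. For (e) (and by the same token (f), or via the isomorphism) the count is tight, and the paper finishes with a three-line forced-arc argument: the unique out-arc of one interior vertex must go to $B^-_v$, forcing two further arcs, and the remaining arcs fail to form a $B^+_u$. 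Replacing your cut paragraph by this counting-then-forcing argument gives a correct proof that is essentially identical to the paper's.
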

\begin{proof}
	If $D$ has a good $(u,v)$-pair, then the size of $D$ must be at least $2(|V(D)|-1)$, moreover,  if $v$ dominates $u$, then  $|A(D)|\geq 2|V(D)|-1$ since  the arc $vu$ cannot be used in any $(u,v)$-pair. This shows that  $D$ has no good $(u,v)$-pair if it is isomorphic to one of digraphs in Figure \ref{fig4} (a)-(d).
	
	For the case that $D$ is isomorphic to the digraph in Figure \ref{fig4} (e), suppose that $D$ has a good $(u,v)$-pair ($B_u^+,B_v^-$). Then every arc except for $vu$ either belongs to $B_u^+$ or $B_v^-$ as $|A(D-vu)|=2|V(D)|-2$. Let $V(D)-\{u,v\}=\{w,z\}$ such that $w$ dominates $z$. The only out-arc $zv$ of $z$ must belong to $B_{v}^-$ and $vw$ must belong to $B_{u}^+$. By the definitions of out- and in-branchings, we have $wv,uw\in A(B_v^-)$ and then $B_u^+=uz\cup vwz$. However, $B_u^+$ is not an out-branching rooted at $u$, a contradiction. By a similar argument, $D$ has no good $(u,v)$-pair when it is not isomorphic to the digraph shown in Figure \ref{fig4} (f).
\end{proof}

\noindent{}{\bf Proof of Theorem \ref{thm:SDbranchchar}:}
\begin{proof}
  Observe that if there is a good $(u,v)$-pair, then clearly there are arc-disjoint $(u,z)$- and $(w,v)$-paths for every choice of vertices $z$ and $w$.  Hence the necessity follows by Lemma \ref{counterexample}. Next we show the sufficiency. If $u=v$, then it follows from Theorem  \ref{sABC} that the desired branchings exist since if there was an arc $e=pq$ as in the theorem, then $D$ would have no pair of arc-disjoint $(u,q)$- and $(p,x)$-paths. Hence we can assume that $u\neq v$.

  Suppose first that $D$ is non-strong. Since there is a $(u,z)$-path and a $(w,v)$-path for any $z,w$, we have that $u$ belongs to $Out(D)$ and $v$ belongs to $In(D)$. By Lemma \ref{nonstrong-OI}, we may assume that $|V(D)|\leq 3$. Since $D$ is not isomorphic to one of digraphs shown in Figure \ref{fig4} (a)-(b), either $|Out(D)|=2$ or $|In(D)|=2$ and then there is a good $(u,v)$-pair in $D$.

  It remains to consider the case that $D$ is strong. Further, by Theorem \ref{thm:SD2arcstrong}, we may assume that $D$ is not 2-arc-strong, implying that it has a cut-arc. Let $xy$ be a cut-arc of $D$ and let $U_1,\ldots,U_l$ $(l\geq 2)$ be the acyclic ordering of the strong components of $D-xy$. Note that $y\in U_1$ and $x\in U_l$. Suppose that $u$ belongs to $U_i$ and $v$ belongs to $U_j$. Since there exist arc-disjoint $(u,y)$- and $(x,v)$-paths in $D$,  either $u$ belongs to $U_1$ or $v$ belongs to $U_l$ (or both), that is, either $i=1$ or $j=l$.
	
Note that $|V(D)|\geq 3$ as it has two arc-disjoint $(u,v)$-paths. If $|V(D)|=3$, say $V(D)=\{u,v,w\}$, then $u$ dominates $w,v$ and $w$ dominates $v$. It can be checked easily that there is a good $(u,v)$-pair if $v$ dominates $w$ or $w$ dominates $u$. Hence we can assume that $v$ dominates $u$ and then $D$ is isomorphic to the digraph shown in Figure \ref{fig4} (c), contradicting our assumption. Therefore, we may assume that $|V(D)|\geq 4$. 
	 
	Suppose first that $u\in U_1$ and $v\in U_l$, that is, $i=1,j=l$. By Theorem \ref{thmOI} we are done, unless either (i) or (ii) in the theorem holds. By symmetry, we may assume that the statement (i) of Theorem \ref{thmOI} holds. However, then there is no pair of arc-disjoint $(u,z)$- and $(y,v)$-paths in $D$, which contradicts our assumption.

        Consider next the case $i=1$ and $j\neq l$. Let $X=U_1\cup\cdots\cup U_j$ and $Y=V(D)-X$.
        If $D\left\langle X\right\rangle$ does not have an out-branching rooted at $u$ which is arc-disjoint from some $(y,v)$-path, then by Lemma \ref{outbranpath}, $(D,u,y,v)$ is one of the types in Definition \ref{def1}. It follows by Lemma \ref{type--nopaths} that $D$ has no arc-disjoint $(u,z)$- and $(y,v)$-paths for any vertex $z\in V_1$, contradicting our assumption. So we may assume that $D\left\langle X\right\rangle$ has an out-branching $T^+_u$ rooted at $u$ and a $(y,v)$-path $P_{y,v}$ which are arc-disjoint. 
	
	Let $P_{x}$ be a hamiltonian path ending in $x$ in $D\left\langle Y\right\rangle$. Such path $P_{x}$ exists as $x$ belongs to $In(D-xy)$. Let $T^-_v=P_x\cup\{xy\}\cup P_{y,v}$. Clearly, all vertices of $P_{y,v}$ are covered by $T^-_v$, that is, $X^{\prime}=V(P_{y,v})$. Now Lemma \ref{XYblue} implies that we are done unless $|X^{\prime}|=|V(P_{y,v})|=1$. This means that $v=y\in Out(D-xy)$ and then $X=Out(D-xy)$. Moreover, $|X|\geq 2$ as $u\neq v$. Hence, we may assume that the second statement of  Lemma \ref{XYblue} holds. That is, $|(X-y)\cup Y|\leq 3$ and $D\left\langle X-y\right\rangle$ (resp., $D\left\langle Y\right\rangle$) is a single vertex or an arc covered by $T^+_u$ (resp., by $T^-_v$). Moreover, all arcs in $D\left\langle X\right\rangle$ which not covered by $T^+_u$ are out-arcs of $y$.
	
	On the other hand, as $|V(D)|\geq 4$, we have $|(X-y)\cup Y|\geq 3$ and then the equality holds. Suppose that $|Y|=2$, say $Y=\{x,x^{\prime}\}$, then $|X|=2$ and $X=\{u,v\}$ with $v=y$ as $u\neq v$. In this case,  $uvx^{\prime}\cup ux$ and $ux^{\prime}xv$ form a good $(u,v)$-pair. Thus it remains to consider the case that $|Y|=1$ and $|X|=3$, say $X=\{u,v,w\}$ with $v=y$. Recall that $D\left\langle X\right\rangle$ is strong as $X=Out(D-xy)$. Since $D\left\langle X-y\right\rangle$ is an arc covered by $T^+_u$ (as we are in Case (ii) of Lemma \ref{XYblue}) and all arcs in $D\left\langle X\right\rangle$ which are  not used  by $T^+_u$ are out-arcs of $y$ (i.e., $v$), we may assume that $uwvu$ is a hamitonian cycle in $D\left\langle X\right\rangle$ and $T^+_u=uwv$. Then $D$ is isomorphic to the digraph in Figure \ref{fig4} (d) or (e),
        contradicting our assumption.
	
	Finally assume that that $i\neq 1$ and $j=l$. By applying similar arguments as above  to the digraph obtained from $D$ by reversing all arcs, we either find that $D$ has the desired branchings or it would be isomorphic to one of  the digraphs in Figure \ref{fig4} (d) or (f), a contradiction again. This completes the proof of Theorem \ref{thm:SDbranchchar}. \end{proof}

        All our arguments leading to the proof of Theorem \ref{thm:SDbranchchar} are constructive and can be converted to polynomial algorithms. Hence we have the following corollary.

        \begin{coro}
          There exists a polynomial algorithm which given a semicomplete digraph $D=(V,A)$ and vertices $u,v$ of $D$ either constructs a good $(u,v)$-pair or produces a certificate that $D$  has no such pair. 
          \end{coro}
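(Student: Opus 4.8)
The statement to prove is the corollary asserting a polynomial algorithm exists. The plan is to observe that this is a direct consequence of the constructive nature of the proof of Theorem~\ref{thm:SDbranchchar}, together with the constructive proofs of all the auxiliary results it invokes. I would organize the argument as a walk-through of the decision procedure, at each stage either halting with a good $(u,v)$-pair built explicitly or halting with a structural certificate of non-existence.

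\medskip

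First I would dispose of the two easy global checks. Checking condition (i) of Theorem~\ref{thm:SDbranchchar}---that for every pair $z,w\in V$ there are arc-disjoint $P_{u,z},P_{w,v}$---is polynomial: by Theorem~\ref{goodpair-paths} each such pair either exists or is blocked by one of the finitely many obstructions of Definition~\ref{def1}, and each obstruction is a partition of $V$ satisfying explicit degree conditions, so it can be detected (and a witnessing pair of paths constructed when it does not occur) via maximum-flow computations as in \cite[Section 5.4]{bang2009}; one runs this over the $O(|V|^2)$ choices of $(z,w)$. Condition (ii) is a constant-size check against Figure~\ref{fig4}(b)--(f). If either fails, output the offending pair or the small digraph as the certificate. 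Otherwise proceed to construction.

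\medskip

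For the construction I would simply trace the proof of Theorem~\ref{thm:SDbranchchar} and note that every branching, tree, path, partition, and hamiltonian path it produces is obtained by an algorithm that is already known to be polynomial: Edmonds' Branching Theorem (Theorem~\ref{edmonds1973}) via Lov\'asz's constructive proof \cite{lovaszJCT21} and max-flow; Camion's theorem (Theorem~\ref{thm:camion}) and the hamiltonian-path extension of Redei's theorem, both of which have the standard polynomial constructions; the strong-component decomposition and the acyclic ordering, computable in linear time; Theorem~\ref{sABC} and Theorem~\ref{thmOI}, whose proofs in the references and in Section~\ref{sec:proof} are again algorithmic; and the tree-extension Lemmas~\ref{XYnoarc}--\ref{XYarc} and Lemma~\ref{outbranpath}, whose proofs literally exhibit how to modify a given pair of arc-disjoint trees into larger ones by adding a bounded number of explicitly named arcs. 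The proof of Theorem~\ref{thm:SDbranchchar} branches on a constant number of cases (non-strong; strong and $2$-arc-strong via Theorem~\ref{thm:SD2arcstrong}; strong with a cut-arc, subdivided according to the positions $i,j$ of $u,v$ in the acyclic order of $D\setminus xy$), and within each case performs a bounded number of such polynomial subroutines. Composing them gives an overall polynomial bound.

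\medskip

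The one point that deserves a remark rather than being truly an obstacle: several of the invoked theorems (Theorem~\ref{sABC}, Theorem~\ref{thmOI}, the results of \cite{bangJCT51}) are stated as existence/characterization results, so strictly one must confirm their proofs are constructive. They are---each proof proceeds by exhibiting the desired object or the structural obstruction directly---and I would state this explicitly, perhaps bundling it as the observation that a characterization ``$D$ has a good pair iff $D$ avoids an explicit finite list of polynomially-detectable obstructions'' is automatically algorithmic once one also extracts the solution from the ``yes'' side of each intermediate step. So the write-up is essentially: ``All arguments leading to Theorem~\ref{thm:SDbranchchar} are constructive and use only subroutines already known to run in polynomial time (max-flow for Edmonds' theorem, linear-time strong component decomposition, standard polynomial constructions for Camion's and Redei's theorems, and the explicit tree modifications of Section~\ref{sec:extend}); running them through the constant-depth case analysis of the proof yields the claimed algorithm.'' That is the whole proof, and there is no genuinely hard step---only the bookkeeping of verifying each cited ingredient is effective.
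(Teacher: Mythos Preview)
Your proposal is correct and takes essentially the same approach as the paper, which simply states that all the arguments leading to Theorem~\ref{thm:SDbranchchar} are constructive and can be converted to polynomial algorithms. Your write-up is considerably more detailed in naming the polynomial subroutines and tracing the case analysis, but the underlying idea is identical.
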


        The following equivalent structural characterization of semicomplete digraphs with good $(u,v)$-pairs is often  easier to use when one wishes to study digraphs that are more general than semicomplete digraphs.
        In particular we use it in  \cite{bangQTDbranchpaper} to study good $(u,v)$-pairs in so-called semicomplete compositions.

\begin{thm}\label{SDgoodpair}
Let $D$ be a semicomplete digraph and $u,v$ be arbitrary chosen vertices (possibly $u=v$). Then $D$ has a good $(u,v)$-pair if and only if $(D,u,v)$ satisfies none of the following conditions.

(i) 
$D$ is isomorphic to one of the digraphs in Figure \ref{fig4}.

(ii) $D$ is non-strong and either $u$ is not in the initial component of $D$ or  $v$ is not in the terminal component of $D$.

(iii) $D$ is strong and there exists an arc $e\in A(D)$ such that $u$ is not in the initial component of $D-e$ and $v$ is not in the terminal component of $D-e$.

(iv) $D$ is strong and there exists a partition $V_1,\ldots,V_{2\alpha+3}$ of $V(D)$ for some $\alpha\geq 1$ such that $v\in V_2, u\in V_{2\alpha+2}$ and all arcs between $V_i$ and $V_j$ with $i<j$ from $V_i$ to $V_j$ with the following exceptions. There exists precisely one arc from $V_{i+2}$ to $V_{i}$ for all $i\in[2\alpha+1]$  and it goes from the terminal component of $D\left\langle  V_{i+2}\right\rangle$ to the initial component of $D\left\langle  V_i\right\rangle$.
\end{thm}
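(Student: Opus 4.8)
\textbf{Proof proposal for Theorem \ref{SDgoodpair}.}

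The plan is to derive this structural characterization directly from Theorem \ref{thm:SDbranchchar} by showing that conditions (i)--(iv) here are precisely the negation of ``(i) and (ii)'' there. The ``only if'' direction is the same necessity argument already given: if $D$ has a good $(u,v)$-pair then condition (i) fails by Lemma \ref{counterexample}; condition (ii) fails because $u$ must reach every vertex and $v$ must be reachable from every vertex, so $u\in\mathrm{Out}(D)$ and $v\in\mathrm{In}(D)$, and when $D$ is non-strong these are exactly the initial and terminal components; condition (iii) fails because for the cut-arc-removed digraph $D-e$ one still needs arc-disjoint $(u,y)$- and $(x,v)$-paths where $e=xy$ (in fact one needs $u\in\mathrm{Out}(D-e)$ or $v\in\mathrm{In}(D-e)$); and condition (iv) fails by Lemma \ref{type--nopaths} applied to the type $2\alpha+3$ partition, which forbids the requisite arc-disjoint paths.

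For the ``if'' direction, I would assume $(D,u,v)$ satisfies none of (i)--(iv) and verify that conditions (i) and (ii) of Theorem \ref{thm:SDbranchchar} hold, so that a good $(u,v)$-pair exists. Condition (ii) of that theorem is literally condition (i) here, so nothing is needed. The work is in showing condition (i) of Theorem \ref{thm:SDbranchchar}: for every $z,w\in V$ there are arc-disjoint $P_{u,z},P_{w,v}$. Here I invoke Theorem \ref{goodpair-paths}: such paths fail to exist only if (a) $D$ is non-strong with $\{u\},\{v\}$ consecutive strong components and $u=z$, $w=v$ — but this forces $D$ non-strong with $u$ not in (or, rather, being a singleton initial-adjacent component) which I need to show is subsumed by condition (ii) here; or (b) all four of $u,z,w,v$ lie in one strong component $D_j$ and $(D_j, \cdot,\cdot,\cdot)$ is one of the types of Definition \ref{def1} with the relevant endpoint in $V_1$. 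I then need to translate ``$D_j$ has a type-$X$ partition with $u$ (or $v$) placed appropriately'' into one of conditions (ii), (iii), (iv) of the present theorem. Type $A$ with $v\in V_1$ gives a partition of $D_j$ with one arc from $V_2$ to $V_1$; if $D$ is strong this yields a cut-arc $e$ after which $v$ is not terminal and $u$ is not initial — that is condition (iii). Type $A$ with $z\in V_1$ (the ``$(D_j,w,u,z)$ of type $A$'' possibility) similarly must be handled and shown to force (iii) (or, if $u$ ends up unconstrained, to actually not obstruct a good pair — I must be careful here since $z,w$ range over all vertices). Type $B$ and the even types $2\alpha+2$ should be shown to be subsumed: a type $B$ partition $V_1,V_2,V_3$ with $u,v\in V_2$ again produces a cut-arc witnessing (iii); a type $2\alpha+2$ partition with $v\in V_2$, $u\in V_{2\alpha+2}$ can be refined — by moving $w$'s block — into either a type $2\alpha+3$ (giving (iv)) or a lower type, using the freedom that $w$ is not a named vertex of the theorem. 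Type $2\alpha+3$ with $v\in V_2$, $u\in V_{2\alpha+2}$ is exactly condition (iv).

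The main obstacle I anticipate is the bookkeeping around the free vertices $z$ and $w$ in Theorem \ref{goodpair-paths} versus the fixed $u,v$ in the present theorem. In Definition \ref{def1} the triple is $(D,u,w,v)$ with a specific cyclic role for each of the three vertices, but in applying Theorem \ref{goodpair-paths} the obstruction may read $(D_j,x_i,x_{3-i},y_{3-i})$ where the ``$u$-slot'' is played by $w$ or by $z$ rather than by the genuine root $u$. I will need to argue that whenever the obstruction is witnessed with a non-root vertex in the distinguished slot, one can choose $z,w$ to avoid it unless a genuine obstruction in terms of $u,v$ (i.e.\ one of (ii)--(iv)) is present; concretely, the odd types collapse to condition (iii) (a single bad cut-arc) when the ``long'' structure is only forced by the placement of a free vertex. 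A clean way to organize this is: first dispose of the non-strong case (condition (ii) versus Theorem \ref{goodpair-paths}(i) and Lemma \ref{nonstrong-OI}), then in the strong case note that failing (iii) means for \emph{every} arc $e$, $u$ is initial in $D-e$ or $v$ is terminal in $D-e$, which rules out all of Type $A$, Type $B$ and forces the even types to extend to odd ones, leaving only Type $2\alpha+3$, i.e.\ condition (iv); combined with $D$ not being $2$-arc-strong exceptional ($S_4$) via Theorem \ref{thm:SD2arcstrong} and not being in Figure \ref{fig4}, this closes the argument. I would also double-check the small-order cases ($|V(D)|\le 4$) by hand against Figure \ref{fig4}, exactly as in the proof of Theorem \ref{thm:SDbranchchar}.
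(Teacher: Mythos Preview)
Your approach is essentially the paper's own: reduce Theorem~\ref{SDgoodpair} to Theorem~\ref{thm:SDbranchchar} via Theorem~\ref{goodpair-paths}, handle the non-strong case with Lemma~\ref{nonstrong-OI}, and in the strong case translate the type obstructions of Definition~\ref{def1} into conditions (iii) or (iv). The paper's proof does exactly this, but is terse at the last step (``it is not difficult to check that $(D,u,v)$ satisfies condition (iii) or (iv)''), which is precisely where you spend most of your effort.

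Two points will dissolve your stated worries. First, your concern that the free vertices $z,w$ might leave $u$ or $v$ ``unconstrained'' is unfounded. In Theorem~\ref{goodpair-paths}(ii), with $(x_1,y_1,x_2,y_2)=(u,z,w,v)$, the two alternatives are that $(D,u,w,v)$ is one of the types with $z\in V_1$, or $(D,w,u,z)$ is one of the types with $v\in V_1$. In the first alternative $u$ and $v$ occupy the prescribed slots of the type, so their blocks are fixed; in the second, $u$ is the middle vertex of the type (hence $u\in V_2$ for type~$A$, $u\in V_3$ for type~$B$, $u\in V_{2\alpha+1}$ for type~$2\alpha+2$, $u\in V_{2\alpha+3}$ for type~$2\alpha+3$) and the extra hypothesis pins $v\in V_1$. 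In every case both $u$ and $v$ are located, and one checks directly that (iii) holds via a suitable cut-arc, except in the single case ``$(D,u,w,v)$ is of type $2\alpha+3$'', which is literally condition~(iv).

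Second, your plan to ``refine'' a type-$(2\alpha+2)$ partition into a type-$(2\alpha+3)$ one is the wrong move; type~$2\alpha+2$ already yields (iii). With $v\in V_2$ and $u\in V_{2\alpha+2}$, let $e$ be the unique backward arc from $V_{2\alpha+2}$ to $V_{2\alpha}$. In $D-e$ the set $V_1\cup\cdots\cup V_{2\alpha+1}$ is strong (the remaining backward arcs $V_{i+2}\to V_i$, $i\in[2\alpha-1]$, together with all forward arcs give a spanning closed walk) and is the initial component, while the terminal component lies inside $V_{2\alpha+2}$. Hence $u\notin$ initial and $v\notin$ terminal, giving (iii). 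The analogous check disposes of types $A$, $B$, and the second alternative of Theorem~\ref{goodpair-paths}(ii) uniformly. With these two clarifications your outline becomes a complete proof matching the paper's.
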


\begin{proof}
We first prove the necessity.  Lemma \ref{counterexample} shows that $D$ has no such pair if (i) holds. It is not difficult to check that when (ii) or (iii) holds, there is no good $(u,v)$-pair in $D$ as no such pair can cover the vertices in the initial and terminal components in the same time. If (iv) holds, let $z$ and $w$ be two vertices of $V_1$ and $V_{2\alpha+3}$, respectively. Then there is no pair of arc-disjoint $(u,z)$- and $(w,v)$-paths (due to Lemma \ref{type--nopaths}). By Theorem \ref{thm:SDbranchchar}, $D$ has no good $(u,v)$-pair.

Now suppose that $(D,u,v)$ satisfies none of the conditions (i)-(iv). We prove that then there exists a good $(u,v)$-pair. If $D$ is non-strong,  since $(D,u,v)$ does not satisfy  condition (ii), we have $u\in Out(D)$ and $v\in In(D)$. From Lemma \ref{nonstrong-OI}, we may assume that $D$ has order at most three. Since $D$ is not isomorphic to one of digraphs shown in Figure \ref{fig4} (a)-(b), either $|Out(D)|=2$ or $|In(D)|=2$ and then there is a good $(u,v)$-pair in $D$. 

For the case that $D$ is strong, suppose for contradiction that there is no good $(u,v)$-pair. Then by Theorem \ref{thm:SDbranchchar}, there exist $z,w$ such that there is no pair of arc-disjoint $(u,z)$-  and $(w,v)$-paths in $D$. Clearly, $D$ has a $(u,z)$-path and a $(w,v)$-path as $D$ is strong.  By Theorem \ref{goodpair-paths},  either $(D,u,w,v)$ is one of the types in Definition \ref{def1}, or $(D,w,u,z)$ is one of the types in Definition \ref{def1} and the vertex $v$ belongs to $V_1$. It is not difficult to check that $(D,u,v)$ satisfies condition (iii) or (iv), which contradicts our assumption. This completes the proof.
\end{proof}

\noindent{}{\bf Conflict of interest statement}\\
There are no sources of conflict of interest regarding this paper.\\

\noindent{}{\bf Data availability statement}\\
Data sharing is not applicable to this article as no datasets were generated or analysed during the current study.

\noindent{}{\bf Acknowledgement:} Financial support from the Independent Research Fund Denmark under grant DFF-7014-00037B is gratefully acknowledged. The second author was supported by China Scholarship Council (CSC) No. 202106220108.


\end{document}